\documentclass[12pt]{article}
\usepackage{amssymb,amsmath,amsfonts,amsthm,amsxtra,setspace}

\newcommand{\mf}[1]{\mathfrak{#1}}

\newcommand{\mc}{\mathcal}
\newcommand{\bsigma}{\mathbf{\mathop{\pmb{\sum}}}}
\newcommand{\set}[1]{\{#1\}}

\newcommand{\PPI}{\mathbf{PPI}}
\newcommand{\CW}{\mathbf{CW}}
\newcommand{\forces}{\Vdash}
\newcommand{\noopsort}[1]{}
\newtheorem{prob}{Problem}

\newtheorem{claim}{Claim}

\newtheorem{theorem}{Theorem}[section]

\newenvironment{rem}{\begin{trivlist} \item[] {\bf Remark.}}{\hspace*{0pt}\end{trivlist}}

\newsavebox{\Prfref}

\newsavebox{\prfref}




\newtheoremstyle{ref}
{\topsep}	
{\topsep}	
{\it}
{}
{}
{}
{ }
{\thmname{{\bfseries#1}}\thmnumber{ \textbf{#2\thmnote{\rm #3}\textbf .}}}

\theoremstyle{ref}
\newtheorem{lem}[theorem]{Lemma}
\newtheorem{thm}[theorem]{Theorem}
\newtheorem{prop}[theorem]{Proposition}

\newtheorem{sthm}[theorem]{Theorem}
\newtheorem{slem}[theorem]{Lemma}

\newtheorem{scor}[theorem]{Corollary}
\newtheorem{definition}[theorem]{Definition}

\newtheorem{lemma}[theorem]{Lemma}

%

\theoremstyle{nnref}
\newtheorem*{defn}{Definition}

\tolerance=200

\begin{document}
\title{PFA$(S)[S]$ and countably compact spaces}
\author{Alan Dow{$^1$} and Franklin D. Tall{$^2$}}

\footnotetext[1]{Research supported by NSF grant DMS-1501506.}
\footnotetext[2]{Research supported by NSERC grant A-7354.\vspace*{2pt}}
\date{\today}
\maketitle

\begin{abstract}
We show a number of undecidable assertions concerning countably
compact spaces hold under  
PFA(S)[S]. We also show the
consistency without large cardinals of \emph{every 
locally compact, perfectly normal space is paracompact}.
\end{abstract}

\renewcommand{\thefootnote}{}
\footnote
{\parbox[1.8em]{\linewidth}{$2000$ Math.\ Subj.\ Class.\ Primary 54A35, 03E35, 54D20; Secondary 03E55, 03E65, 03E75, 54A20, 54D15, 54D30, 54D45,
54D55.}\vspace*{5pt}}
\renewcommand{\thefootnote}{}
\footnote
{\parbox[1.8em]{\linewidth}{Key words and phrases: locally compact, normal, countably compact, free sequence, countably tight, PFA$(S)[S]$, perfect pre-image of $\omega_1$, $\omega$-bounded, paracompact, sequentially compact.}}

\section{Introduction}

This note is a sequel to \cite{DT}. 
As shown in  \cite{LTo}, \cite{To}, \cite{LT},  and \cite{DT},
forcing with a coherent Souslin tree  over a model of an iteration
axiom such as PFA produces a model with many of the consequences  
of the iteration axiom plus some useful consequences of V = L. 
As in \cite{To}, it is useful to catalog the former consequences for future
use, especially when their proofs are non-trivial. As in [5], it is
also  useful to distinguish consequences of the method which do not
require large cardinals.
 For a  discussion of what we call PFA(S)[S],
see e.g. \cite{LT} and \cite{D1}.
 Our main results here are a proof that
PFA(S)[S] implies countably tight perfect pre-images of $\omega_1$
include copies of  $\omega_1$, a simplification of the proof in \cite{LT} that
PFA(S)[S] implies locally compact, perfectly  normal spaces are
paracompact, and a demonstration that this last conclusion can be
obtained  without the use of large cardinals.
  
In \cite{DT} we proved

\begin{sthm}\label{thmMain2}
PFA$(S)[S]$ implies every sequentially compact, non-compact regular space includes an uncountable free sequence.  If the
space has character $\leq \aleph_1$, then it includes a copy of $\omega_1$.
\end{sthm}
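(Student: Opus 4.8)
The plan is to argue in the standard PFA$(S)[S]$ framework: $S$ is a coherent Souslin tree, $V\models$ PFA$(S)$ (the forcing axiom for proper posets preserving the Souslinity of $S$), and we pass to $V[G]$ for $G\subseteq S$ generic; let $X\in V[G]$ be sequentially compact, non-compact and regular. First I would observe that a sequentially compact space is countably compact and that a countably compact non-compact space fails to be Lindel\"of, so $X$ is not Lindel\"of; and since PFA$(S)[S]$ implies every regular hereditarily separable space is hereditarily Lindel\"of (there are no $S$-spaces), $X$ is not hereditarily separable either. Hence $X$ contains an uncountable left-separated subspace $\{x_\alpha:\alpha<\w_1\}$ with witnessing open sets $U_\alpha\ni x_\alpha$, $U_\alpha\cap\{x_\beta:\beta<\alpha\}=\emptyset$. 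The goal becomes: thin $\{x_\alpha\}$ to a free sequence of length $\w_1$; the refinement to a copy of $\w_1$ under character $\le\aleph_1$ will be a separate argument at the end.

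The role of sequential compactness is to supply accumulation data. Writing $F_\lambda=\ov{\{x_\beta:\beta<\lambda\}}$ for limit $\lambda<\w_1$, for each such $\lambda$ I would pick $\beta^\lambda_n\uparrow\lambda$ with $\langle x_{\beta^\lambda_n}\rangle_n$ convergent, say to $p_\lambda$; cofinality of $\langle\beta^\lambda_n\rangle$ in $\lambda$ then gives $p_\lambda\in\ov{\{x_\gamma:\delta\le\gamma<\lambda\}}$ for every $\delta<\lambda$. These ``focal points'' are what let one control a prospective free sequence at limit stages. Next, working in $V$ over $S$-names for $X$, $\langle x_\alpha\rangle$, $\langle U_\alpha\rangle$ and $\langle p_\lambda\rangle$, I would invoke PFA$(S)$ to force an uncountable $A\subseteq\w_1$ for which $\langle x_\alpha:\alpha\in A\rangle$ is free. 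Conditions would be finite increasing tuples of ordinals, together with finitely many open subsets of $X$ witnessing that for the ordinals listed so far the closure of an initial segment misses the closure of the complementary tail, plus finitely many requirements tethering future choices to the focal points $p_\lambda$, the whole decorated with countable elementary submodels as side conditions in the style now standard in the PFA$(S)[S]$ literature. The density lemmas --- that the generic $A$ is uncountable and the construction never jams --- come down to separating the closure of a countable initial segment from a co-countable tail; failure of this, fed through countable compactness, would make a piece of $X$ simultaneously countably compact and Lindel\"of, hence compact, against the way $\{x_\alpha\}$ was chosen.

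The heart of the matter --- and the step I expect to cost the most work --- is verifying that this poset is \emph{proper and preserves the Souslinity of $S$}. This I would do in the by-now-routine-but-delicate manner: below a condition whose topmost side-condition model is $M$, one shows the poset's generic amalgamates with $S\restriction(\w_1\cap M)$, using coherence of $S$ to reconcile the finitely many open sets a condition carries with arbitrary prolongations above a node of $S$ at level $\w_1\cap M$. Granted this ``$(S)$-properness'', PFA$(S)$ yields a filter meeting the $\aleph_1$ many dense sets that pin down $A$ and its separating open sets; these objects lie in $V$, and reading them off via the $S$-name and using genericity of $G$ shows that $\langle x_\alpha:\alpha\in A\rangle$ is a free sequence of length $\w_1$ in $X$, proving the first assertion.

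For the second assertion I would assume in addition that $X$ has character $\le\aleph_1$ and let $\langle x_\alpha:\alpha<\w_1\rangle$ now be a free sequence of length $\w_1$ as just produced. With $p_\lambda$ as above, freeness gives $p_\lambda\in\ov{\{x_\gamma:\gamma\ge\delta\}}$ for each $\delta<\lambda$, a set disjoint from $F_\delta$; hence $p_\lambda\in F_\lambda\setminus\bigcup_{\delta<\lambda}F_\delta$, so the $p_\lambda$ are distinct and strictly increasing along the filtration $\langle F_\lambda:\lambda\text{ limit}\rangle$. Fixing at each $p_\lambda$ a neighbourhood base of size $\le\aleph_1$, I would run a transfinite recursion with a pressing-down step to select an increasing sequence $\langle\lambda_\xi:\xi<\w_1\rangle$ of limit ordinals so that in the subspace $Z=\{p_{\lambda_\xi}:\xi<\w_1\}$ each $p_{\lambda_\xi}$ with $\xi$ limit is the limit of $\langle p_{\lambda_\eta}:\eta<\xi\rangle$ while every proper initial segment of $Z$ is closed and discrete; such a $Z$ is homeomorphic to $\w_1$, which finishes the proof.
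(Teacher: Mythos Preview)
This theorem is not proved in the present paper: it is quoted from \cite{DT} in the introduction (``In \cite{DT} we proved\dots''), so there is no proof here to compare against. What I can do is assess your sketch on its own terms.

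Your overall architecture --- pull the problem back to $V$ via $S$-names, build a proper $S$-preserving poset with elementary-submodel side conditions, apply PFA$(S)$, read the result off in $V[G]$ --- is the right shape for this kind of argument, and the use of sequential compactness to produce the focal points $p_\lambda$ is a sensible ingredient. The final paragraph, upgrading a free sequence to a copy of $\omega_1$ under character $\le\aleph_1$ via pressing down along the $p_\lambda$'s, is a standard manoeuvre and essentially correct in outline.

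The genuine gap is in the first half. Starting from ``no $S$-spaces'' to obtain a left-separated sequence is fine, but the step you treat as routine --- extending a finite condition by one more ordinal while maintaining the closure-separation witnesses --- is exactly where the real content lies, and your justification (``failure of this \dots would make a piece of $X$ simultaneously countably compact and Lindel\"of, hence compact, against the way $\{x_\alpha\}$ was chosen'') does not hold up. Left-separation of $\{x_\alpha\}$ gives you nothing about closures of \emph{tails}, and there is no evident compact subspace produced by a failed extension; sequential compactness must be used much more actively to guarantee that the closures of the initial segment and the tail can be separated. Likewise, ``by-now-routine-but-delicate'' is not a proof of $S$-preserving properness: in this setting the conditions carry $S$-names of open sets, and the amalgamation over a model $M$ requires a careful analysis of how the coherence of $S$ interacts with the separation witnesses --- this is typically the longest and hardest part of such arguments, and nothing you have written indicates how it would go. Until those two pieces are supplied, the sketch is an outline of intentions rather than a proof.
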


\begin{scor}
PFA$(S)[S]$ implies $(\PPI)$: Every first countable perfect pre-image of $\omega_1$ includes a copy of $\omega_1$.
\end{scor}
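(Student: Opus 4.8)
The plan is to read the corollary off from Theorem~\ref{thmMain2}. Let $X$ be a first countable space carrying a perfect surjection $f\colon X\to\omega_1$; as is built into the notion of a perfect pre-image, we take $X$ to be regular (and in fact $X$ is then locally compact, since for each $\alpha<\omega_1$ the set $f^{-1}([0,\alpha])$ is a compact clopen neighbourhood of the fibre over $\alpha$). First I would note that $X$ is not compact, for otherwise its continuous image $\omega_1=f[X]$ would be compact, which it is not. Thus the hypotheses of Theorem~\ref{thmMain2} will be in place once sequential compactness is verified.

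To see that $X$ is sequentially compact, the point is that every countable subset of $X$ lies inside a compact subspace. Given a sequence $\langle x_n:n\in\omega\rangle$ in $X$, the set $\{f(x_n):n\in\omega\}$ is a countable, hence bounded, subset of $\omega_1$, say contained in $[0,\alpha]$ for some $\alpha<\omega_1$; then $\langle x_n\rangle$ lies in $K:=f^{-1}([0,\alpha])$, which is compact because $[0,\alpha]$ is compact and $f$ is perfect. Since $K$ is compact and first countable, $\langle x_n\rangle$ has a subsequence converging to a point of $K\subseteq X$. Hence $X$ is sequentially compact.

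Theorem~\ref{thmMain2} now applies under PFA$(S)[S]$: the sequentially compact, non-compact, regular space $X$ contains an uncountable free sequence, and since $X$ is first countable --- so of character $\le\aleph_1$ --- it in fact contains a copy of $\omega_1$, which is precisely $(\PPI)$. I do not expect any genuine obstacle, since all the weight is carried by Theorem~\ref{thmMain2}; the only points needing a moment's care are the standing separation convention on pre-images and the elementary fact that a compact first countable space is sequentially compact.
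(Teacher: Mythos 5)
Your derivation is correct and is precisely how the paper intends the corollary to follow from Theorem~\ref{thmMain2}: a first countable perfect pre-image of $\omega_1$ is regular, non-compact, and sequentially compact (every sequence lies in a compact first countable fibre-preimage $f^{-1}([0,\alpha])$), so the character~$\le\aleph_1$ clause of the theorem yields the copy of $\omega_1$. No issues.
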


Although this paper is about PFA$(S)[S]$, many of the results follow
from just PFA. Some are new, because, although $\PPI$ was known to
follow from PFA, the stronger version in Theorem \ref{thmMain2} was
not formulated and proven from PFA earlier. A simpler version of the
proof in \cite{DT} proves it from PFA. 

\section{Countably compact, perfectly normal spaces}
\begin{sthm}\label{thmClosedGDelta}
PFA$(S)[S]$ implies every countably compact regular space with closed sets $G_\delta$'s is compact.
\end{sthm}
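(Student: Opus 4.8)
The plan is to deduce first countability from the hypothesis, invoke Theorem~\ref{thmMain2} to locate a copy of $\omega_1$, and then contradict the closed-$G_\delta$ property using the club filter on $\omega_1$. \emph{First}, I would show $X$ is first countable: each singleton $\{x\}$ is closed, hence a $G_\delta$, say $\{x\}=\bigcap_n U_n$ with $U_n$ open, and by regularity one may choose a decreasing sequence of open sets $W_n$ with $\overline{W_{n+1}}\subseteq W_n\cap U_{n+1}$, so that $\bigcap_n\overline{W_n}=\{x\}$; for any open $V\ni x$ the closed sets $\overline{W_n}\setminus V$ decrease to $\emptyset$, so countable compactness yields $\overline{W_n}\subseteq V$ for some $n$, and $\{W_n\}_n$ is a neighbourhood base at $x$. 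Being first countable and countably compact, $X$ is sequentially compact.

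Next, suppose toward a contradiction that $X$ is not compact. By Theorem~\ref{thmMain2}, $X$ (sequentially compact, non-compact, regular, of character $\le\aleph_1$) contains a subspace $A$ homeomorphic to $\omega_1$. Replacing $X$ by the closed subspace $\overline{A}$ — which is again regular and countably compact, is first countable, and in which every closed set, being closed in the original $X$, is a $G_\delta$ there and hence a $G_\delta$ in $\overline{A}$ — we may assume $A$ is dense in $X$. The main point is that then $A$ must be closed. If not, fix $p\in\overline{A}\setminus A$ with a decreasing neighbourhood base $\{V_n\}_n$ and, by density, points $a_n\in V_n\cap A$. Identifying $A$ with $\omega_1$, the indices of the $a_n$ are bounded by some $\gamma<\omega_1$, so all the $a_n$ lie in the initial segment $C_\gamma=\{a_\alpha:\alpha\le\gamma\}$, which is a compact, hence closed, subset of $X$ contained in $A$; since every neighbourhood of $p$ contains a tail of $\langle a_n\rangle$, we get $p\in C_\gamma\subseteq A$, a contradiction. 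Thus $X=\overline{A}=A$ is homeomorphic to $\omega_1$.

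It remains to note that $\omega_1$ with the order topology does \emph{not} have all closed sets $G_\delta$: the set $E$ of limit ordinals is closed, and if $E=\bigcap_n U_n$ with each $U_n$ open then each $\omega_1\setminus U_n$ is a closed set of successor ordinals, hence bounded (an unbounded closed subset of $\omega_1$ contains limit ordinals); taking $\gamma^*<\omega_1$ above all these bounds gives $[\gamma^*,\omega_1)\subseteq\bigcap_n U_n=E$, which is absurd. This contradicts the previous paragraph, so $X$ is compact. The one step needing genuine care is the claim that a dense copy of $\omega_1$ must exhaust the space, which combines the first countability established above with the compactness of proper initial segments of $\omega_1$; the rest is routine once Theorem~\ref{thmMain2} is available.
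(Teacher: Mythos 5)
Your proof is correct, but it takes a genuinely different route from the paper's. Both arguments begin the same way: points are closed, hence $G_\delta$, and the standard regularity-plus-countable-compactness argument you give yields first countability and therefore sequential compactness, so Theorem~\ref{thmMain2} applies to a putative non-compact $X$. The paper then uses only the \emph{free sequence} clause of Theorem~\ref{thmMain2}: since closed sets are $G_\delta$, open sets are $F_\sigma$, so every discrete subspace is $\sigma$-closed-discrete and hence countable (closed discrete sets in a countably compact space are finite), which immediately contradicts the existence of an uncountable free sequence. You instead use the stronger \emph{copy of $\omega_1$} clause and contradict the $G_\delta$ hypothesis directly: your observations that a countable subset of the copy sits inside a compact (hence closed, under the standing convention that regular includes Hausdorff) initial segment, so the copy is closed in $X$, and that the limit ordinals are not a $G_\delta$ in $\omega_1$, are both correct. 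The paper's argument is shorter and isolates a cleaner combinatorial fact (no uncountable discrete subspaces); yours makes explicit exactly where the closed-$G_\delta$ hypothesis bites, namely that $\omega_1$ is not perfect, at the cost of the extra step verifying the copy is closed. Both are valid derivations from Theorem~\ref{thmMain2} alone.
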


The conclusion was proved from MA$_{\omega_1}$ by W. Weiss \cite{W}.  It is not at all obvious that it can be
obtained from PFA$(S)[S]$, since Weiss' proof uses MA$_{\omega_1}(\sigma$-centred$)$ essentially.  The perfectly normal version of Theorem \ref{thmClosedGDelta} was proved by S. Todorcevic several years ago with a non-trivial stand-alone proof.  We now can easily obtain the stronger version from \ref{thmMain2}.

\begin{proof}
Since open sets are $F_\sigma$'s, discrete subspaces of such a space are $\sigma$-closed-discrete and hence countable.  Countably compact regular spaces with points $G_\delta$ are first countable and hence sequentially compact, so the result follows from Theorem \ref{thmMain2}.
\end{proof}

In \cite{LT} it was shown that

\begin{thm}\label{thm:paracompact}
There is a model of PFA$(S)[S]$ in which every locally compact perfectly normal space is paracompact.
\end{thm}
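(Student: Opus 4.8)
The plan is to work in the PFA$(S)[S]$ model of \cite{LT} and argue by contradiction: suppose $X$ is locally compact, perfectly normal and not paracompact, and produce a subspace of $X$ homeomorphic to $\omega_1$ with its order topology. This is a contradiction because perfect normality is hereditary while $\omega_1$ is not perfectly normal: its set of limit ordinals is closed but not $G_\delta$, since every open set containing it is co-countable. The first easy observation is that $X$, and hence every subspace of $X$, is first countable: points of $X$ are $G_\delta$ by perfect normality, and a $G_\delta$ point of a locally compact space has a countable neighbourhood base (intersect with a compact neighbourhood and use that $G_\delta$ points of compact Hausdorff spaces are first countable). So every closed subspace of $X$ is first countable and perfectly normal.

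Next I would carry out the structural reduction using the classical theory of locally compact spaces. A locally compact space is paracompact iff it is the topological sum of its clopen $\sigma$-compact subspaces; so from the failure of paracompactness one extracts, by a transfinite chain argument using local compactness, an open (hence locally compact, perfectly normal) subspace $W \subseteq X$ of Lindelöf number $\aleph_1$ that is not Lindelöf and admits no decomposition into proper clopen Lindelöf pieces. Nyikos's analysis of such Type I spaces then yields a closed subspace $Z \subseteq W$ carrying a perfect map onto $\omega_1$. As a perfect pre-image of $\omega_1$, $Z$ is countably compact and non-compact; being closed in $X$ it is also first countable and perfectly normal. But then Theorem \ref{thmClosedGDelta} forces $Z$ to be compact --- or, equally, $Z$ is sequentially compact of character $\leq\aleph_1$, so Theorem \ref{thmMain2} puts a copy of $\omega_1$ inside $Z$ and hence inside $X$ --- and either way we reach the desired contradiction.

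I expect the real work to be the middle step: guaranteeing that a non-paracompact locally compact perfectly normal space actually contains a closed perfect pre-image of $\omega_1$. One must set up the Type I decomposition carefully, arrange good behaviour on a closed unbounded set of levels, and make sure the failure of paracompactness is not absorbed into an uncountable closed discrete family or some other $\omega_1$-obstruction that does not itself produce a perfect pre-image of $\omega_1$. This is where the auxiliary consequences of PFA$(S)[S]$ available in the \cite{LT} model are used --- in particular that locally compact normal spaces are collectionwise normal, which separates closed discrete families and forces the decomposition to terminate in the $\omega$-bounded case. It is also why the statement is phrased ``there is a model of PFA$(S)[S]$'' and why the large cardinal hypothesis needed is exactly that for PFA$(S)[S]$; the simplification over \cite{LT} is that, once this reduction is in hand, the contradiction is delivered by the single clean tool of Theorem \ref{thmClosedGDelta}/\ref{thmMain2} instead of a self-contained combinatorial argument.
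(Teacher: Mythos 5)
The endgame you describe is fine, but the bridge to it --- ``Nyikos's analysis of such Type I spaces then yields a closed subspace $Z \subseteq W$ carrying a perfect map onto $\omega_1$'' --- is not a theorem, and for locally compact perfectly normal spaces it is consistently false: under MA$_{\omega_1}$ the Cantor tree over a $Q$-set is locally compact, perfectly normal (a normal Moore space), and non-paracompact, yet it has a $G_\delta$-diagonal and therefore contains no perfect pre-image of $\omega_1$ whatsoever. All that the failure of paracompactness of a Type I piece gives you is a stationary set of $\alpha$ with $\overline{X}_\alpha \setminus X_\alpha \neq \emptyset$; choosing one point from each such boundary produces a \emph{locally countable} subspace of size $\aleph_1$, and nothing forces that set to be closed discrete, to have countably compact closure, or to assemble into a perfect pre-image of $\omega_1$. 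Ruling out exactly this intermediate obstruction is the content of $\bsigma^-$ (locally countable subspaces of size $\aleph_1$ of compact countably tight, or here sequential, spaces are $\sigma$-discrete), which your proposal never invokes and which is the engine of both the proof in \cite{LT} and the simplification given in this paper as Theorem \ref{thmpnpc}. Collectionwise normality/Hausdorffness alone does not ``force the decomposition to terminate in the $\omega$-bounded case''; it only handles the closed discrete obstruction, not the locally-countable one.

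For comparison, the paper's route (Theorem \ref{thmpnpc}) is: (i) the one-point compactification $X^*$ is sequential, using ``countably compact perfectly normal implies compact'' (Theorem \ref{thmClosedGDelta}, which rests on Theorem \ref{thmMain2}) together with first countability of $X$; (ii) closures of Lindel\"of subspaces are Lindel\"of, proved by applying $\bsigma^-$ for compact sequential spaces inside $\overline{Z}^*$ together with closed sets $G_\delta$, normality, and $\aleph_1$-collectionwise Hausdorffness; (iii) the Gruenhage--Junnila lemma (which genuinely needs collectionwise Hausdorffness, not just a transfinite chain argument) decomposes the space into a sum of pieces of Lindel\"of number $\leq \aleph_1$; (iv) in such a piece the boundary points chosen from a stationary set form a locally countable set of size $\aleph_1$, which $\bsigma^-$ makes $\sigma$-discrete, perfect normality upgrades to $\sigma$-closed-discrete, and pressing down plus normality plus $\aleph_1$-collectionwise Hausdorffness converts into an uncountable discrete open family tracing onto a Lindel\"of $X_\alpha$ --- the contradiction. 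So Theorems \ref{thmClosedGDelta} and \ref{thmMain2} enter only at step (i); no copy of $\omega_1$ and no perfect pre-image of $\omega_1$ is ever produced. To repair your argument you would have to either prove your extraction claim from the hypotheses actually available (no such proof is known, and the $Q$-set example shows it cannot be done from local compactness and perfect normality alone) or restructure the middle of the proof around $\bsigma^-$ as in \cite{FTT} and Theorem \ref{thmpnpc}.
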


The proof depended on $\bsigma^-$:
\begin{quotation}
\noindent \textit{In a compact $T_2$, countably tight space, locally countable subspaces of size $\aleph_1$ are $\sigma$-discrete.}
\end{quotation}

The proof of $\bsigma^-$ (see \cite{FTT}) depended on the following result, due to Todorcevic:

\begin{slem}[{~\cite{To}}]\label{lemTodorcevic}
PFA$(S)[S]$ implies every compact countably tight space is sequential.
\end{slem}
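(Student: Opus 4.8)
The plan is to adapt Balogh's argument that PFA makes every compact countably tight space sequential (the version for PFA$(S)[S]$ being due to Todorcevic), run inside the model $V[G]$, where $V\models$ PFA$(S)$, $S$ is a coherent Souslin tree, and $G\subseteq S$ is generic. For a set $A$ in a space write $A^{(\alpha)}$ for the $\alpha$-th iterated sequential closure; since $\omega_1$ is regular, $A^{(\omega_1)}:=\bigcup_{\alpha<\omega_1}A^{(\alpha)}$ is sequentially closed, so a compact $T_2$ countably tight $X$ fails to be sequential precisely when some $A\subseteq X$ has $p\in\overline A\setminus A^{(\omega_1)}$ for some $p$. Arguing in $V[G]$ towards a contradiction, fix such $X,A,p$; by countable tightness pick a countable $C\subseteq A$ with $p\in\overline C$, note $C^{(\omega_1)}\subseteq A^{(\omega_1)}$, and replace $X$ by the (compact, countably tight) subspace $\overline C$ and $A$ by $C^{(\omega_1)}$. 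Thus we may assume $X$ is separable, $A$ is a dense sequentially closed proper subset of $X$, and $p\in X\setminus A$; in particular no $\omega$-sequence from $A$ converges to $p$, and more usefully $p\notin B^{(\omega_1)}$ for every countable $B\subseteq A$.

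I would then, working over a large $H(\theta)$ (computed in $V$, so that the pertinent $S$-names are available), introduce a finite-conditions poset $\mathcal P$ whose conditions are pairs $(f,\mathcal N)$ with $f$ a finite partial function $\omega_1\to A$ and $\mathcal N$ a finite $\in$-chain of countable elementary submodels of $H(\theta)$ containing $X,A,p,S$, subject to the usual coherence requirements, designed so that the generic function $\langle a_\xi:\xi<\omega_1\rangle$ is a free sequence of $X$ (equivalently, in some presentations, a copy of $\omega_1$). The side conditions do the standard work of guaranteeing properness and of controlling where new coordinates may be placed. The essential point is the extension lemma: given a condition and a model $M\in\mathcal N$, one can always fill the coordinate $\delta=\sup(M\cap\omega_1)$ by a point of $A$ that avoids a prescribed closed set coded in $M$ while keeping $p$ in the closure of the resulting final segment — and this is exactly where the hypothesis $p\notin B^{(\omega_1)}$ (applied to $B=A\cap M$) is spent, since a failure of the extension would let one build, inside $M$, a short iterated-sequential-closure construction from $A$ capturing $p$. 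Verifying this extension lemma, and that it survives the side-condition bookkeeping, is the heart of Todorcevic's argument and the step I expect to be the main obstacle.

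It then remains to check that $\mathcal P$ is proper and that it is not destroyed by $S$ in the sense demanded by the PFA$(S)[S]$ method, so that PFA$(S)$ in $V$ applies to it; here coherence of $S$ enters in the standard way, amalgamating two conditions whose side-condition chains form a $\Delta$-system with root below the pertinent levels of $S$ after passing to a common node of $S$. Granting this, PFA$(S)$ applied against the $\aleph_1$ dense sets that fill every coordinate of $f$ produces, back in $V[G]$, a free sequence $\langle a_\xi:\xi<\omega_1\rangle$ of $X$. But a compact Hausdorff space of countable tightness admits no free sequence of length $\omega_1$: taking $q$ in the nonempty intersection $\bigcap_{\xi<\omega_1}\overline{\{a_\eta:\eta\ge\xi\}}$, freeness forces $q\notin\overline D$ for every countable $D\subseteq\{a_\eta:\eta<\omega_1\}$ while $q$ lies in the closure of the whole range, so $t(X)\ge\omega_1$, contradicting countable tightness. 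Hence no non-sequential compact countably tight space exists in $V[G]$, which is the lemma.
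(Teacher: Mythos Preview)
The paper does not give its own proof of this lemma: it is stated with attribution to Todorcevic \cite{To} and is used as a black box. Indeed, the paper explicitly remarks that ``Lemma \ref{lemTodorcevic} has taken a very long time to appear'' and then goes to some effort to \emph{avoid} relying on it where possible. So there is nothing in the paper to compare your proposal against.

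That said, your outline has the right general shape (reduce to a separable compactum with a dense sequentially closed proper subset, then force a free sequence of length $\omega_1$ using models-as-side-conditions, contradicting countable tightness), but it is a sketch rather than a proof, and the point at which it is vaguest is exactly the point that distinguishes the PFA$(S)[S]$ theorem from Balogh's PFA theorem. The space $X$, the set $A$, and the point $p$ live in $V[G]$, not in $V$; the poset $\mathcal P$ to which PFA$(S)$ is applied must be defined in $V$, using $S$-names for these objects, and must be shown there to be proper \emph{and} $S$-preserving. Your phrasing ``not destroyed by $S$'' and the description of conditions as finite partial functions into $A$ (an object of $V[G]$) elide this. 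The extension lemma you flag as ``the main obstacle'' is genuinely the hard step, but its difficulty in the PFA$(S)[S]$ setting is precisely that one must carry it out symmetrically over all branches $s\oplus G$ of $S$ simultaneously, using the coherence of $S$; the amalgamation you describe for $S$-preservation is not automatic and requires a careful analysis of how the side conditions interact with the $S$-names. As written, your proposal would prove the PFA version but does not yet contain the additional ideas needed for PFA$(S)[S]$.
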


Lemma \ref{lemTodorcevic} has taken a very long time to appear.  It
therefore may be of interest that we can avoid using it to prove
Theorem \ref{thm:paracompact}.  We first note that in \cite{FTT},
$\bsigma^-$ is proved for compact sequential spaces, and Lemma
\ref{lemTodorcevic} is then appealed to.  Let us therefore show that
that restricted version of $\bsigma^-$ suffices. 

\begin{thm}\label{thmpnpc}
Assume:
\begin{enumerate}
\item{
Countably compact, perfectly normal spaces are compact;}
\item{
$\bsigma^-$ for compact sequential spaces;
}
\item{
Normal first countable spaces are collectionwise Hausdorff.
}
\end{enumerate}
Then locally compact, perfectly normal spaces are paracompact.
\end{thm}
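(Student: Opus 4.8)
The plan is to argue by contradiction: assuming $X$ is locally compact and perfectly normal but not paracompact, I will use the three hypotheses to reach a contradiction. First some reductions. Since $X$ is perfectly normal its points are closed $G_\delta$'s, so every compact subspace is first countable (a $G_\delta$ point of a compact Hausdorff space has a countable neighbourhood base); as $X$ is locally compact, $X$ itself is first countable, and so is every subspace of $X$. Being perfectly normal, $X$ is normal, so hypothesis (3) makes $X$ (and every closed subspace, these being first countable and perfectly normal) collectionwise Hausdorff. I will also use the standard fact that a locally compact space is paracompact iff it is the free topological sum of clopen $\sigma$-compact subspaces, together with the observation that in $X$ every open Lindelöf subspace is $\sigma$-compact and hereditarily Lindelöf (it is locally compact and Lindelöf, hence $\sigma$-compact, and its open subsets, being $F_\sigma$'s, are again $\sigma$-compact).

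By that characterisation, failure of paracompactness yields an open subspace --- the union of the open $\sigma$-compact sets that meet some fixed one --- which is not $\sigma$-compact. Inside it one recursively constructs a strictly increasing, continuous chain $\langle U_\alpha : \alpha<\omega_1\rangle$ of open $\sigma$-compact sets with $\overline{U_\alpha}\subseteq U_{\alpha+1}$ (using local compactness), the recursion never stopping since the subspace is not $\sigma$-compact. Choosing $x_\alpha\in U_{\alpha+1}\setminus\overline{U_\alpha}$ produces a set $D=\{x_\alpha:\alpha<\omega_1\}$ of size $\aleph_1$ that is locally countable: $x_\alpha$ lies in the open hereditarily Lindelöf set $U_{\alpha+1}$, and $U_{\alpha+1}\cap D=\{x_\beta:\beta\le\alpha\}$ because $U_{\alpha+1}\subseteq U_\gamma$ while $x_\gamma\notin\overline{U_\gamma}$ whenever $\gamma>\alpha$. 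The point of the construction, carried out with care, is that the closure $\overline D$ (or the appropriate closed set attached to the chain) should be \emph{countably compact}: whenever the points chosen so far contain an infinite closed discrete subset of $X$, one separates it using collectionwise Hausdorffness and absorbs it into the free-sum part, so that the points which actually escape the chain accumulate. Granting this, $\overline D$ is a closed, countably compact, perfectly normal subspace of $X$, so hypothesis (1) upgrades it to a \emph{compact} space; being a subspace of the first countable $X$, $\overline D$ is first countable, hence sequential.

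Now $D$ is a locally countable subspace of size $\aleph_1$ of the compact sequential space $\overline D$, so hypothesis (2) --- $\bsigma^{-}$ for compact sequential spaces --- gives that $D$ is $\sigma$-discrete. Since open subsets of $X$ are $F_\sigma$'s, discrete subspaces of $X$ are $\sigma$-closed-discrete (exactly as in the proof of Theorem \ref{thmClosedGDelta}), so $D$ is $\sigma$-closed-discrete in $\overline D$; but $\overline D$ is compact, so a $\sigma$-closed-discrete subset of it is a countable union of finite sets, hence countable --- contradicting $|D|=\aleph_1$. Therefore $X$ is paracompact.

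The step I expect to be the main obstacle is exactly the one flagged above: organising the $\omega_1$-recursion so that the escaping points lie inside a countably compact closed subspace (rather than onto an infinite closed discrete set), which is where collectionwise Hausdorffness (hypothesis (3)) is used to clean up closed discrete pieces and where hypothesis (1) is then invoked to pass from countable compactness to compactness. In the proof of \cite{LT} this role was played by $\bsigma^{-}$ for \emph{all} compact countably tight spaces, obtained via Lemma \ref{lemTodorcevic}; here the relevant compact space, being a subspace of the first countable space $X$, is automatically sequential, so the weaker hypothesis (2) --- which is what is actually established in \cite{FTT} --- suffices, and Lemma \ref{lemTodorcevic} can be dispensed with. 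The remaining ingredients, namely the recursion itself and the final counting contradiction, are routine.
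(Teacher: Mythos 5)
There is a genuine gap, and it is exactly the step you flagged yourself: the claim that $\overline{D}$ (the closure of your set of escape points) can be arranged to be countably compact, so that hypothesis (1) turns it into a compact sequential space in which $\bsigma^-$ can be applied. Nothing in the construction forces this, and ``absorbing infinite closed discrete pieces into the free-sum part'' is not an argument --- there is no free-sum decomposition of a non-paracompact space to absorb anything into, and in general the points $x_\alpha\in U_{\alpha+1}\setminus\overline{U_\alpha}$ will contain many infinite closed discrete subsets of $X$. Without a compact countably tight (or sequential) superspace of $D$, hypothesis (2) cannot be invoked at all, and the endgame collapses. The paper obtains the needed compact sequential space quite differently: hypothesis (1) is used once, to show that the one-point compactification $X^*$ is sequential (a non-closed $Y\subseteq X$ cannot be countably compact, since countably compact perfectly normal subspaces are compact, hence closed; so $Y$ has an infinite closed discrete subset, which converges to $*$; the case $*\in Y$ is handled by first countability of $X$). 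Then $\bsigma^-$ is applied inside $X^*$. Note that $X^*$ is not first countable at $*$, so your observation that ``the relevant compact space, being a subspace of the first countable $X$, is automatically sequential'' does not produce the compact space you actually need.

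There are two further gaps. First, your recursion needs $\overline{U_\alpha}$ to be Lindel\"of in order to find an open $\sigma$-compact $U_{\alpha+1}\supseteq\overline{U_\alpha}$; ``using local compactness'' does not give this. That closures of Lindel\"of subspaces are Lindel\"of is itself a lemma in the paper, proved by applying $\bsigma^-$, perfect normality, normality and collectionwise Hausdorffness inside the sequential compactification $\overline{Z}^{\,*}$. Second, even granted that $D$ is $\sigma$-closed-discrete, your final contradiction does not close: your points satisfy $x_\alpha\notin\overline{U_\alpha}$, so an uncountable closed discrete $D''\subseteq D$ expanded to a discrete open family contradicts nothing, since the ambient space is not Lindel\"of and $\overline{D}$ has not been shown compact. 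The paper instead first reduces, via the Gruenhage--Junnila decomposition for locally compact perfectly normal collectionwise Hausdorff spaces, to spaces of Lindel\"of number $\le\aleph_1$, writes such a space as a continuous increasing $\omega_1$-chain of open Lindel\"of sets $X_\alpha$ with $\overline{X}_\alpha\subseteq X_{\alpha+1}$, and picks \emph{boundary} points $x_\alpha\in\overline{X}_\alpha\setminus X_\alpha$ at stationarily many $\alpha$; pressing down on these boundary points is what forces uncountably many members of the discrete open expansion to meet a single Lindel\"of $X_\beta$, which is the contradiction. So your overall architecture (an $\omega_1$-chain, a locally countable set of size $\aleph_1$, $\bsigma^-$ plus closed sets $G_\delta$, collectionwise Hausdorffness) matches the paper's, but each of the three places where the hypotheses actually do the work is either missing or miscast.
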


\begin{proof}
Given a locally compact, perfectly normal space $X$ that is not
compact, consider its one-point compactification $X^* = X \cup
\set{*}$.  To show $X^*$ is sequential, take a non-closed subspace $Y$
of $X^*$.  If $Y \subseteq X$, $Y$ is not countably compact.  Then it
has a countably infinite closed discrete subspace $D$.  Viewed as a
sequence, $D$ converges to the point at infinity.  Now suppose $*$,
the point at infinity, is in $Y \subseteq X^*$.  Let $z$ be a limit
point of $Y$ which is not in $Y$.  Then $z$ is a limit point of $Y -
\set{*}$, and so there is a sequence from $Y$ converging to $z$, since
$X$ is first countable. 
\end{proof}

We now can show the closure of a Lindel\"of subspace $Z$ of $X$ is
Lindel\"of.  $\overline{Z}^*$, the one-point compactification of
$\overline{Z}$, is sequential.  We claim it is hereditarily  
Lindel\"of.  If not, it has a right-separated subspace $R$ of size
$\aleph_1$, which by $\bsigma^-$ for sequential spaces is
$\sigma$-discrete.  Let $R'$ be an uncountable discrete subspace of $R
\cap \overline{Z}$.  By closed sets $G_\delta$, $R'$ is
$\sigma$-closed-discrete, so let $R''$ be a closed discrete subspace
of $R$ of size $\aleph_1$.  By $\aleph_1$-collectionwise Hausdorffness
and normality, expand $R''$ to a discrete collection of open sets.
The traces of those open sets on $Z$ form an uncountable discrete
collection, contradicting Lindel\"ofness.\hfill\qed\\ 

Continuing with the proof of Theorem \ref{thmpnpc}, we have improved
\cite{LT} by not needing that there are no first countable
$L$-spaces. 
Instead we use:

\begin{lem}
Locally compact, perfectly normal, collectionwise Hausdorff spaces are
the topological sum of subspaces with Lindel\"of number $\leq \aleph_1$.
\end{lem}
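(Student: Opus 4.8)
The plan is to reduce, first, to showing that \emph{every point of $X$ lies in a clopen subspace with Lindel\"of number $\le\aleph_1$}, and then to assemble the decomposition by a transfinite recursion that peels off such pieces. Throughout, perfect normality enters through the $F_\sigma$ property: an open subset of $X$ is a countable union of relatively closed sets, so a relatively open subset of a compact subspace of $X$ is $\sigma$-compact, and --- since closed subspaces of an $\aleph_1$-Lindel\"of space are $\aleph_1$-Lindel\"of --- a relatively open subset of an $\aleph_1$-Lindel\"of subspace of $X$ is again $\aleph_1$-Lindel\"of.

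For the reduction, fix $x\in X$. By local compactness and the $F_\sigma$ property, $x$ has a $\sigma$-compact open neighbourhood $V_0$, and I would then build an increasing chain $\langle V_\alpha:\alpha<\omega_1\rangle$ of $\sigma$-compact open sets with $\overline{V_\alpha}\subseteq V_{\alpha+1}$. At a successor step, $\overline{V_\alpha}$ is Lindel\"of (as shown just above the lemma, closures of Lindel\"of subspaces of $X$ are Lindel\"of), hence is covered by countably many open sets with compact closures, whose union is a $\sigma$-compact open $V_{\alpha+1}\supseteq\overline{V_\alpha}$; at a limit $\lambda<\omega_1$ take the (countable, hence $\sigma$-compact open) union. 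Put $C=\bigcup_{\alpha<\omega_1}V_\alpha$; then $C$ is open and $L(C)\le\aleph_1$. The crux --- the step that makes $\aleph_1$ the correct bound --- is that $C$ is also closed. If $z\in\overline C\setminus C$, pick an open $U\ni z$ with $\overline U$ compact and set $K=\overline{U\cap C}$, a compact set containing $z$; by the $F_\sigma$ property $K\cap C$ is $\sigma$-compact, hence Lindel\"of, so the increasing cover $\{K\cap V_\alpha:\alpha<\omega_1\}$ of $K\cap C$ has a countable subcover and $K\cap C=K\cap V_{\alpha_0}$ for some $\alpha_0$. Then $K\cap C=K\cap V_{\alpha_0}\subseteq K\cap\overline{V_{\alpha_0}}\subseteq K\cap V_{\alpha_0+1}\subseteq K\cap C$, so $K\cap C=K\cap\overline{V_{\alpha_0}}$ is closed, and $z\in\overline{U\cap C}\subseteq\overline{K\cap C}=K\cap C\subseteq C$, a contradiction. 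Hence $C$ is clopen.

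To finish, recursively construct an increasing chain of clopen sets $B_\gamma$, each carrying a partition into clopen $\aleph_1$-Lindel\"of pieces. At a successor step, if $B_\gamma\ne X$, pick $x\notin B_\gamma$, form the clopen $\aleph_1$-Lindel\"of set $C$ around $x$ from the reduction, and add $C\setminus B_\gamma$ (clopen, and $\aleph_1$-Lindel\"of as a closed subspace of $C$). At a limit of uncountable cofinality, take unions; $\bigcup_{\eta<\gamma}B_\eta$ is clopen by the argument of the previous paragraph applied to the increasing clopen family $\{B_\eta\}$ in place of $\{V_\alpha\}$, using that the cofinality exceeds $\omega$. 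The one delicate case is a limit of cofinality $\omega$, where $B'=\bigcup_{\eta<\gamma}B_\eta$ is open and $\aleph_1$-Lindel\"of but need not be closed: here I would run an $\omega_1$-step inner recursion that alternately absorbs every partition member meeting the current set --- at most $\aleph_1$ of them, since those members form a disjoint clopen family covering a relatively open, hence $\aleph_1$-Lindel\"of, subspace --- and then closes the resulting open $\aleph_1$-Lindel\"of set off via the reduction; after $\omega_1$ steps the increasing chain of clopen $\aleph_1$-Lindel\"of sets yields a single clopen $\aleph_1$-Lindel\"of piece containing $B'$ that meets no further member. Since the union strictly increases at successor steps, the recursion exhausts $X$, and the final family of clopen pieces is the desired topological sum. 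The whole argument hinges on the closing-off step of the second paragraph; every appeal to $\aleph_1$ rather than a larger cardinal is traceable to it.
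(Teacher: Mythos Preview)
The paper does not give its own proof of this lemma; it attributes it to Gruenhage (who assumed collectionwise normality with respect to compact sets) and to an unpublished sharpening by Junnila to collectionwise Hausdorffness. So there is no in-paper argument to compare against, and the only question is whether your proof stands on its own.

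It has a genuine gap. In building the chain $\langle V_\alpha:\alpha<\omega_1\rangle$ you need $\overline{V_\alpha}$ Lindel\"of in order to produce a $\sigma$-compact open $V_{\alpha+1}\supseteq\overline{V_\alpha}$, and you justify this by citing the passage ``just above the lemma.'' But that passage lies inside the proof of Theorem~\ref{thmpnpc} and uses its extra hypotheses (1)--(3), in particular $\bsigma^-$ for compact sequential spaces. The present lemma, by contrast, is stated and attributed as a ZFC result assuming only ``locally compact, perfectly normal, collectionwise Hausdorff''; those auxiliary hypotheses are not available here, and ``closures of Lindel\"of subspaces are Lindel\"of'' is not a consequence of the lemma's hypotheses alone. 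A clear symptom that this is not merely a missing citation is that your argument \emph{never uses the collectionwise Hausdorff hypothesis}. The paper takes care to point out that Gruenhage needed collectionwise normality with respect to compact sets and that Junnila weakened this to collectionwise Hausdorffness; some form of collectionwise separation is doing essential work in those proofs, and it must enter yours somewhere. (There is also a secondary slip in your assembly step: at a limit $\gamma$ of countable cofinality you assert that $B'=\bigcup_{\eta<\gamma}B_\eta$ is $\aleph_1$-Lindel\"of, but by stage $\gamma$ the partition may already contain $|\gamma|>\aleph_1$ pieces.)
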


This was proved by G. Gruenhage in \cite{G2}, with the slightly stronger assumption of
``collectionwise normality with respect to compact sets". This was later
improved by H.J.K. Junnila (unpublished) to just use collectionwise Hausdorffness.
  Since a sum of paracompact spaces is paracompact, we have reduced the problem to showing:

\begin{quotation}
\noindent ($\dagger$) \textit{Locally compact, perfectly normal spaces with Lindel\"of number $\leq \aleph_1$ are paracompact}.
\end{quotation}

Because such a space has countable tightness and -- under our
assumptions -- has closures of Lindel\"of subspaces Lindel\"of, it can
be written as an increasing union of $\{X_\alpha\}_{\alpha <
  \omega_1}$, where $X_\alpha$ is Lindel\"of and open,
$\overline{X}_\alpha \subseteq X_{\alpha+1}$, and for limit $\lambda$,
$X_\lambda = \bigcup_{\alpha < \lambda} X_\alpha$.  If the space were
not paracompact, stationarily often $\overline{X}_\alpha - X_\alpha$
would be non-empty.  Picking a point from each such boundary, one
obtains a locally countable subspace of size $\aleph_1$.  Applying
$\bsigma^-$ for sequential spaces and closed sets $G_\delta$, that
subspace is $\sigma$-closed-discrete.  Via pressing down, normality,
and $\aleph_1$-collectionwise Hausdorffness, we obtain an uncountable
discrete collection of open sets tracing onto some $X_\alpha$,
contradicting Lindel\"ofness. \hfill\qed 

Instead of relying on Junnila's unpublished work, we could have used:

\begin{lem}
Suppose every locally compact, perfectly normal space is
collectionwise Hausdorff. Then every locally compact, perfectly normal
space is collectionwise normal with respect to compact sets. 
\end{lem}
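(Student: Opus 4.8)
The plan is to reduce the desired separation property for discrete families of compact sets to the hypothesized collectionwise Hausdorff property, by collapsing each compact set to a point. So let $X$ be locally compact and perfectly normal, and let $\{K_\alpha : \alpha \in A\}$ be a discrete family of compact subsets of $X$; put $K = \bigcup_{\alpha} K_\alpha$, which is closed since a discrete family of closed sets is locally finite. Let $q \colon X \to Y$ be the quotient map that collapses each $K_\alpha$ to a single point $y_\alpha$ and is injective on $X \setminus K$, and set $D = \{y_\alpha : \alpha \in A\}$, so that $q^{-1}(D) = K$.

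The substantive point is that $Y$ is again locally compact and perfectly normal. To begin, $q$ is a \emph{closed} map: for closed $C \subseteq X$ we have $q^{-1}(q(C)) = C \cup \bigcup\{K_\alpha : K_\alpha \cap C \neq \emptyset\}$, and the union on the right is closed, being the union of a subfamily of the locally finite family $\{K_\alpha\}$. The fibres of $q$ are closed (each is a $K_\alpha$ or a singleton), so $Y$ is $T_1$; and as the closed continuous image of a normal space, $Y$ is normal, hence $T_4$. It is also perfectly normal: if $W \subseteq Y$ is open, then $q^{-1}(W)$ is open in the perfectly normal space $X$, so $q^{-1}(W) = \bigcup_n C_n$ with each $C_n$ closed, whence $W = q(q^{-1}(W)) = \bigcup_n q(C_n)$ is an $F_\sigma$ in $Y$ because $q$ is closed. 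For local compactness: if $x \notin K$, then (shrinking a neighbourhood that meets at most one $K_\alpha$, using that the $K_\alpha$ are closed) $x$ has an open neighbourhood $U$ disjoint from $K$, and $q$ carries $U$ homeomorphically onto the open set $q(U)$, so $q(x)$ inherits a compact neighbourhood from $X$; and each $K_\alpha$ has a compact neighbourhood in $X$ which, using discreteness of the family, may be taken to contain $K_\alpha$ in its interior and to miss every $K_\beta$ with $\beta \neq \alpha$ --- hence to be $q$-saturated --- so that its image is a compact neighbourhood of $y_\alpha$. The same saturated open sets show that $D$ is closed and discrete in $Y$.

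Now apply the hypothesis to $Y$: it is collectionwise Hausdorff, so the closed discrete set $D$ expands to a discrete family $\{V_\alpha : \alpha \in A\}$ of open subsets of $Y$ with $y_\alpha \in V_\alpha$. Then $\{q^{-1}(V_\alpha) : \alpha \in A\}$ is, by continuity of $q$, a discrete family of open subsets of $X$, and $K_\alpha = q^{-1}(y_\alpha) \subseteq q^{-1}(V_\alpha)$; this is precisely collectionwise normality with respect to compact sets. I expect the main obstacle to be the middle step --- checking that $Y$ stays within the class of locally compact, perfectly normal spaces --- and the point to watch is that discreteness of $\{K_\alpha\}$ is used essentially twice there: once to make $q$ a closed map (so that perfect normality passes to $Y$), and once to endow the collapsed points $y_\alpha$ with compact neighbourhoods.
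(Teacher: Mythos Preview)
Your proof is correct and follows exactly the approach the paper takes: collapse each $K_\alpha$ to a point, check that the quotient remains locally compact and perfectly normal, apply the hypothesis to separate the resulting closed discrete set, and pull back. The paper merely asserts that the verification is ``routine''; you have supplied those routine details carefully and correctly, including the key observation that $q$ is a closed map (which is what transports perfect normality to the quotient).
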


\begin{proof}
Let $\mc{K}$ be a discrete collection of compact sets in a locally
compact, perfectly normal space $X$. Consider the quotient space $X /
\sim$ obtained by collapsing each member of $\mc{K}$ to a point. It is
routine to verify that $X / \sim$ is locally compact, perfectly
normal, and that if $X / \sim$ is collectionwise Hausdorff, then
$\mc{K}$ is separated in $X$.                                                          
\end{proof}

Following the scheme in \cite{DT}, we shall also prove:

\begin{thm}
If ZFC is consistent then so is ZFC plus ``locally compact perfectly normal spaces are paracompact".
\end{thm}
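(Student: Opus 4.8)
The plan is to build a model of ZFC in which "locally compact, perfectly normal spaces are paracompact" holds, without invoking large cardinals, by verifying the three hypotheses of Theorem \ref{thmpnpc} in a single model. By that theorem it suffices to arrange simultaneously: (1) countably compact, perfectly normal spaces are compact; (2) $\bsigma^-$ for compact sequential spaces; and (3) normal first countable spaces are collectionwise Hausdorff. The natural candidate, following the scheme of \cite{DT}, is a model of the form $V[\mathbb{S}]$ where $\mathbb{S}$ is a coherent Souslin tree added over a model obtained by a finite-support ccc iteration (rather than a proper iteration using large cardinals) that forces enough instances of $\mathrm{MA}_{\omega_1}$-type statements; alternatively one iterates proper forcings of size continuum so that no inaccessible is needed. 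The point is that each of (1)--(3) has a known proof from a consequence of PFA$(S)[S]$ that is itself ccc-forceable or forceable by an iteration not requiring large cardinals.

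First I would handle (3): normal first countable spaces being collectionwise Hausdorff follows from $\mathrm{MA}_{\omega_1}$ (indeed from much less, by the classical Fleissner/Nyikos-style arguments), and $\mathrm{MA}_{\omega_1}$ survives the forcing $\mathbb{S}$ since $\mathbb{S}$ is ccc; this needs no large cardinal. Next, for (1), I would invoke Theorem \ref{thmClosedGDelta}: its proof reduces, via first countability and sequential compactness, to Theorem \ref{thmMain2}, and the remark after Theorem \ref{thmMain2} explicitly notes that the relevant instance follows from PFA alone and hence from a suitable ccc or small proper iteration — so the large-cardinal-free version of Theorem \ref{thmMain2} gives (1) in $V[\mathbb{S}]$. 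The remaining and most delicate ingredient is (2), $\bsigma^-$ for compact sequential spaces: here I would appeal to the analysis in \cite{FTT}, where $\bsigma^-$ for sequential spaces is derived using a P-ideal-dichotomy/Souslin-tree argument together with Todorcevic's combinatorics, and check that the forcing producing that consequence does not require an inaccessible — i.e., that one can get $\bsigma^-$ for sequential spaces from a ccc-over-$V[\mathbb{S}]$ argument or from the preservation properties of $\mathbb{S}$ over an iteration of length $\omega_2$ of proper posets of manageable size. Combining these in one iteration (arranging a single coherent Souslin tree and a bookkeeping that catches all the required instances) yields the model.

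The key steps, in order, are: (a) fix a coherent Souslin tree $\mathbb{S}$ and set up a length-$\omega_2$ iteration over a model of GCH, using either ccc posets or proper posets of size $\le\aleph_1$ (so that $\aleph_2$ is reached without collapsing and without an inaccessible), with bookkeeping that handles all MA$_{\omega_1}$-style tasks and all tasks needed for $\bsigma^-$-for-sequential-spaces; (b) force with $\mathbb{S}$ over the resulting model; (c) verify $\mathrm{MA}_{\omega_1}$ persists, giving (3); (d) verify the large-cardinal-free form of Theorem \ref{thmMain2}, giving (1) via Theorem \ref{thmClosedGDelta}; (e) verify $\bsigma^-$ for compact sequential spaces, giving (2); (f) apply Theorem \ref{thmpnpc}. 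The main obstacle I expect is step (e): the published route to $\bsigma^-$ goes through Lemma \ref{lemTodorcevic} and hence through the full PFA$(S)[S]$ machinery, so the real work is isolating a stand-alone combinatorial principle — the restriction to \emph{sequential} compact spaces is exactly the simplification that makes this feasible — and checking that principle is forceable, over $V[\mathbb{S}]$, by posets small enough to keep the whole construction within ZFC's consistency strength. A secondary subtlety is confirming that all three principles can be realized over the \emph{same} tree $\mathbb{S}$ and the same ground iteration, rather than in three separate models, which is handled by the usual amalgamation of bookkeeping tasks in the iteration.
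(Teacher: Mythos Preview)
Your proposal contains a fatal error in step (3), and this error propagates through the whole outline. You assert that ``normal first countable spaces are collectionwise Hausdorff'' follows from $\mathrm{MA}_{\omega_1}$; in fact $\mathrm{MA}_{\omega_1}$ implies the \emph{negation} of this statement, since it produces uncountable $Q$-sets and hence normal first countable (indeed Moore) spaces that are not collectionwise Hausdorff. Relatedly, the sentence ``$\mathrm{MA}_{\omega_1}$ survives the forcing $\mathbb{S}$ since $\mathbb{S}$ is ccc'' is incoherent: $\mathrm{MA}_{\omega_1}$ applied to the ccc poset $\mathbb{S}$ itself would yield a cofinal branch, so $\mathbb{S}$ cannot be Souslin in any model of $\mathrm{MA}_{\omega_1}$. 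In particular you cannot have $\mathrm{MA}_{\omega_1}$ in the intermediate model while preserving $\mathbb{S}$, and after forcing with $\mathbb{S}$ one has $\mathfrak{p}=\aleph_1$ (as the paper itself notes), so $\mathrm{MA}_{\omega_1}$ fails there too.

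The paper obtains (3) by an entirely different mechanism which your outline omits. One first performs a preliminary forcing to get $\diamondsuit$ for stationary systems on all regular uncountable cardinals; this supplies both the coherent Souslin tree $S$ and, crucially, a $\diamondsuit$-type principle on $\omega_2$ that is preserved through the subsequent $\aleph_2$-c.c.\ iteration. Forcing with $S$ at the end yields $\CW$ (normal first countable $\Rightarrow$ $\aleph_1$-CWH), and the preserved $\diamondsuit$ on $\omega_2$ gives the step-up principle $(*)$ (normal first countable $\aleph_1$-CWH $\Rightarrow$ CWH); together these give (3). For (1) and (2), the iteration is a \emph{countable support} iteration of $S$-preserving proper posets of size $\aleph_1$ (under CH) that force instances of $\PPI$ and of $\bsigma^-$ for compact sequential spaces; the $\diamondsuit$ on $\omega_2$ is also what guarantees the length-$\aleph_2$ iteration catches all instances. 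Your suggestion of a finite-support ccc iteration would not accommodate these proper, non-ccc posets, and without the higher-cardinal $\diamondsuit$ you have no mechanism for either the step-up to full CWH or the bookkeeping.
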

\begin{proof}
First, as in \cite{LT}, we perform a preliminary forcing to obtain
$\diamondsuit$ for stationary systems on all regular uncountable
cardinals.  This will give us a coherent Souslin tree $S$, the form of
$\diamondsuit$ on $\omega_2$ used in \cite{DT}, and after we force
with an $\aleph_2$-c.c. proper $S$-preserving iteration of size
$\aleph_2$, that 
\begin{quotation}
\noindent ($*$) \textit{normal first countable $\aleph_1$-collectionwise Hausdorff spaces are collectionwise Hausdorff}.
\end{quotation}
Following the scheme in \cite{DT} (previously seen in \cite{To1} and
\cite{D}) we form an $\aleph_2$-length countable support iteration of
$\aleph_2$-p.i.c. $S$-preserving proper posets 
establishing $\PPI$ and $\bsigma^-$, after which we then force with
$S$.  The last forcing was shown in \cite{LT} to produce: 
\begin{quotation}
\noindent ($\CW$) \textit{normal first countable spaces are $\aleph_1$-collectionwise Hausdorff}.
\end{quotation}
As noted in \cite{LT}, the other forcings will preserve $\diamondsuit$
for stationary systems on $\omega_2$, so combining ($*$) and $\CW$, we
obtain (3). 

The only new point to observe is that each stage of the iteration
producing $\bsigma^-$ for compact sequential spaces is proper and
$S$-preserving, and has cardinality $\aleph_1$, by CH.  ``Proper and
$S$-preserving" was shown in \cite{FTT}.  As in the $\PPI$ without
large cardinals proof in \cite{DT}, we can code up the necessary
information about locally countable sets of size $\aleph_1$ so as to
get a poset of size $\aleph_1$ for establishing an instance of
$\bsigma^-$.  We can then iterate $\aleph_2$ times, alternately
forcing with the $\PPI$ and $\bsigma^-$ posets, before forcing with
$S$.  Since, as noted in \cite{DT}, $\bsigma^-$ implies $\mathfrak{b}
> \aleph_1$, we don't need to add dominating reals to obtain this as
in \cite{DT}.  As was done in \cite{DT}, one uses the diamond to show
that the iteration is long enough. 
\end{proof}

If one is not trying to avoid Moore-Mr\'owka (i.e. the conclusion of \ref{lemTodorcevic}), it is not hard to prove:

\begin{thm}
Assume $\bsigma^-$ and that normal first countable spaces are collectionwise Hausdorff.  Then locally compact, perfectly normal spaces are paracompact.
\end{thm}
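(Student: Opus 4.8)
The plan is to follow the same structural argument used earlier in the excerpt for Theorem \ref{thmpnpc}, but now we are in a situation where $\bsigma^-$ is assumed outright (for all compact countably tight spaces, not merely sequential ones) and Moore--Mr\'owka is available if needed, so the proof is cleaner. First I would reduce to the case of a non-compact locally compact, perfectly normal space $X$ and pass to its one-point compactification $X^* = X \cup \set{*}$. Since $X$ is perfectly normal, open sets are $F_\sigma$, hence discrete subspaces are $\sigma$-closed-discrete and countable; in particular $X$ has countable spread and countable tightness, and $X^*$ is compact and countably tight. This is the context in which the unrestricted $\bsigma^-$ applies directly, so I do not need the detour through sequentiality that Theorem \ref{thmpnpc} required.

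Next I would establish that closures of Lindel\"of subspaces of $X$ are Lindel\"of, by essentially the argument given in the excerpt: if $Z$ is Lindel\"of, form $\overline{Z}^*$, observe it is compact and countably tight, and argue it is hereditarily Lindel\"of --- otherwise it contains a right-separated $R$ of size $\aleph_1$, which by $\bsigma^-$ is $\sigma$-discrete, hence (using perfect normality, so closed sets are $G_\delta$) contains an uncountable closed discrete $R''$; then expand $R''$ to a discrete open collection by collectionwise Hausdorffness plus normality and take traces on $Z$ to contradict Lindel\"ofness. With this in hand, reduce via the Gruenhage--Junnila lemma (collectionwise Hausdorffness implies $X$ is a topological sum of pieces of Lindel\"of number $\leq \aleph_1$) to proving ($\dagger$): locally compact, perfectly normal spaces of Lindel\"of number $\leq \aleph_1$ are paracompact.

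For ($\dagger$) I would write such a space as an increasing union $\set{X_\alpha}_{\alpha < \omega_1}$ with each $X_\alpha$ Lindel\"of and open, $\overline{X}_\alpha \subseteq X_{\alpha+1}$, and $X_\lambda = \bigcup_{\alpha<\lambda} X_\alpha$ at limits --- possible because the space is countably tight and closures of Lindel\"of sets are Lindel\"of. If the space were not paracompact, stationarily often $\overline{X}_\alpha - X_\alpha \neq \emptyset$; picking a point from each such boundary yields a locally countable subspace $L$ of size $\aleph_1$. By $\bsigma^-$ and closed sets $G_\delta$, $L$ is $\sigma$-closed-discrete, so it contains an uncountable closed discrete subset; pressing down, then applying normality and $\aleph_1$-collectionwise Hausdorffness, produces an uncountable discrete collection of open sets all tracing onto a single $X_\alpha$, contradicting its Lindel\"ofness.

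The only real obstacle is the same one hidden in Theorem \ref{thmpnpc}: one needs collectionwise Hausdorffness not just in passing but to run the Gruenhage--Junnila reduction, and one needs it to survive in the auxiliary quotient spaces when deriving collectionwise normality with respect to compact sets (as in the lemma just above). But here ``normal first countable spaces are collectionwise Hausdorff'' is a hypothesis, and locally compact perfectly normal spaces are first countable, so collectionwise Hausdorffness is simply available; no preservation argument is needed. I expect the bookkeeping in the hereditary-Lindel\"of step (ensuring the discrete open collection genuinely traces to an uncountable subcollection on $Z$) to be the most delicate routine point, but it is identical to what appears in the proof of Theorem \ref{thmpnpc}.
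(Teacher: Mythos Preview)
Your overall structure matches the paper's: the point is that with the full $\bsigma^-$ you only need the one-point compactification to be countably tight rather than sequential, and then the argument of Theorem~\ref{thmpnpc} goes through unchanged. The later steps you outline --- closures of Lindel\"of sets are Lindel\"of, the Gruenhage--Junnila reduction, the proof of ($\dagger$) --- are correct and are exactly what the paper does.

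However, your justification that $X^*$ is countably tight has a real gap. You argue that discrete subspaces of $X$ are $\sigma$-closed-discrete (correct) ``and countable'' (incorrect), concluding that $X$ and $X^*$ have countable spread. The ``countable'' step is lifted from the proof of Theorem~\ref{thmClosedGDelta}, but there the space was \emph{countably compact}, so closed discrete subsets are finite; that is what makes $\sigma$-closed-discrete sets countable. In a merely locally compact, perfectly normal space this fails outright: an uncountable discrete space is locally compact and (being metrizable) perfectly normal, yet has uncountable spread. So you cannot deduce countable tightness of $X^*$ via spread.

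The paper gets countable tightness by a different route: by \cite{B}, the one-point compactification of a locally compact space is countably tight provided the space contains no perfect pre-image of $\omega_1$, and a perfectly normal space cannot contain one (the pre-image of the closed non-$G_\delta$ set of countable limit ordinals would be a closed non-$G_\delta$ set, since perfect maps push $G_\delta$'s of saturated closed sets down to $G_\delta$'s). Once countable tightness of $X^*$ is established this way, the remainder of your argument is fine and agrees with the paper.
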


\begin{proof}
We only used ``countably compact perfectly normal spaces are compact"
in order to prove that the space's one-point compactification was
sequential.  Instead we shall prove just that it is countably tight.
By \cite{B}, it suffices to prove our space includes no perfect
pre-image of $\omega_1$.  But that follows easily from perfect
normality. 
\end{proof}

Here is a variation of Theorem \ref{thmClosedGDelta}.

\begin{sthm}\label{thm27}
PFA$(S)[S]$ implies that if $X$ is countably compact, locally compact,
and does not include a perfect pre-image of $\omega_1$, then $X$ is
compact. 
\end{sthm}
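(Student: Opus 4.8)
The plan is to adapt the one-point-compactification argument used for Theorem~\ref{thmpnpc}, substituting the hypothesis on perfect pre-images for perfect normality and passing through Lemma~\ref{lemTodorcevic}. So suppose $X$ is not compact, and let $X^{*} = X \cup \set{*}$ be its one-point compactification; since $X$ is locally compact (and, as throughout, Hausdorff), $X^{*}$ is compact Hausdorff. Observe that $X$ fails to be closed in $X^{*}$: the singleton $\set{*}$ is open precisely when $X$ is compact. We aim to contradict this failure of closedness.

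The one substantive step is to verify that $X^{*}$ is countably tight, and this is exactly where ``$X$ includes no perfect pre-image of $\omega_{1}$'' is used. By \cite{B}, invoked just as in the proof of the preceding theorem, the one-point compactification of a locally compact space containing no perfect pre-image of $\omega_{1}$ is countably tight; hence $X^{*}$ is countably tight. Since we are assuming PFA$(S)[S]$, Lemma~\ref{lemTodorcevic} now applies to the compact countably tight space $X^{*}$ and yields that $X^{*}$ is sequential.

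Finally I would play sequentiality of $X^{*}$ off against $X$ not being closed in it. As $X$ is not closed it is not sequentially closed, so some sequence $\langle x_{n}:n<\omega\rangle$ drawn from $X$ converges in $X^{*}$ to a point of $X^{*}\setminus X$, that is, to $*$. Its range $E=\set{x_{n}:n<\omega}$ is infinite, since a finite-range sequence in a Hausdorff space is eventually constant and its value would then lie in $X$; and $E$ is closed and discrete in $X$, because a convergent infinite-range sequence in a Hausdorff space has its limit as its only accumulation point, and that limit $*$ lies outside $X$. But an infinite closed discrete subspace is incompatible with the countable compactness of $X$. Therefore $X$ is compact.

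The main obstacle is the middle step: one must make sure that the form of \cite{B} being cited applies to a space that is only assumed locally compact Hausdorff, and that ``perfect pre-image of $\omega_{1}$'' there means what it means here --- both of which hold, exactly as in the preceding theorem. Everything after that is the same sequential-space and countable-compactness bookkeeping already used in Theorem~\ref{thmpnpc}; if one preferred, the same sequence argument shows $X$ is itself sequentially compact, so that Theorem~\ref{thmMain2} could be applied instead, but the route above is the shorter one.
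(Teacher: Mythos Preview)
Your argument is correct. Both you and the paper begin identically: pass to the one-point compactification $X^{*}$, invoke \cite{B} to get countable tightness, and then apply Lemma~\ref{lemTodorcevic} to conclude $X^{*}$ is sequential. From that point on the routes diverge.

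You finish in one stroke: $X$ is a non-closed subset of the sequential space $X^{*}$, hence not sequentially closed, so some sequence from $X$ converges to $*$, yielding an infinite closed discrete subset of $X$ and contradicting countable compactness. The paper instead takes the detour you mention in your last paragraph: it first uses the same ``sequence converging to $*$'' observation to show $X$ is sequentially compact, then appeals to Theorem~\ref{thmMain2} to produce an uncountable free sequence in $X$, and finally argues that the $X$-closure of that free sequence is not closed in $X^{*}$, whence (by sequentiality again) a sequence from it converges to $*$, giving the same closed-discrete contradiction. Your route is strictly shorter and avoids Theorem~\ref{thmMain2} entirely; the paper's route has the virtue of exhibiting Theorem~\ref{thmMain2} at work, which fits its expository aim of cataloguing consequences of that theorem, but logically your direct argument suffices.
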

\begin{proof}
By \cite{B}, the one-point compactification $X^*$ is countably tight.
By PFA$(S)[S]$ and \cite{To} (also \ref{lemTodorcevic}), $X^*$ is
sequential and hence 
sequentially compact.  (Dow \cite{D1} obtains sequential compactness
via PFA$(S)[S]$ without using \ref{lemTodorcevic}.)  Then $X$ is
sequentially compact, for take a sequence.  It has a convergent
subsequence in $X^*$.  The limit of that sequence cannot be the point
at infinity, $*$, else the sequence would be closed discrete,
violating countable compactness.  By \ref{thmMain2} it suffices to
show $X$ has no uncountable free sequence.  Suppose it did.  Then that
free sequence has $*$ in its closure, else it would be free in $X^*$,
contradicting countable tightness.  Since the closure of the free
sequence in $X$ is not closed in $X^*$, it is not sequentially closed
there, so there is a sequence in the closure of the free sequence
which converges to $*$, since there is nowhere else for it to converge
to.  But then, again there is an infinite closed discrete subset of
$X$, contradiction. 
\end{proof}

The following corollary is of interest with regard to the question of whether there exist large countably compact, locally countable spaces.

\begin{scor}\label{cor28}
PFA$(S)[S]$ implies that if $X$ is uncountable, countably compact, locally countable, and $T_3$, then $X$ includes a copy of $\omega_1$.
\end{scor}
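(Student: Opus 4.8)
The plan is to show that under the stated hypotheses $X$ is in fact locally compact and first countable, after which Theorem~\ref{thm27} (together with the Corollary to Theorem~\ref{thmMain2}) finishes the job. The key preliminary observation is purely topological: a countable, countably compact space is compact, since a countable space is Lindel\"of and a Lindel\"of countably compact space is compact (given a countable open cover with no finite subcover, a choice of points lying outside the successive finite unions would be a sequence with no cluster point). So, given $X$ locally countable and $T_3$, pick for each $x$ a countable open neighbourhood $U$ and, by regularity, an open $V$ with $x\in V\subseteq\overline V\subseteq U$. Then $\overline V$ is closed in $X$, hence countably compact, and countable, hence compact; thus $X$ is locally compact. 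Moreover $\overline V$ is a countable compact Hausdorff space, hence metrizable --- this is classical; by the Mazurkiewicz--Sierpi\'nski theorem such a space is even homeomorphic to a countable ordinal --- so $x$ has a countable neighbourhood base, and $X$ is first countable.

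Next, $X$ is not compact: a compact locally countable space is covered by finitely many countable open sets, hence is countable, contradicting the assumption that $X$ is uncountable. By Theorem~\ref{thm27}, it follows that $X$ includes a perfect pre-image $Z$ of $\omega_1$. As a subspace of the first countable space $X$, $Z$ is first countable, so by the Corollary to Theorem~\ref{thmMain2} (that is, $\PPI$), $Z$ --- and therefore $X$ --- includes a copy of $\omega_1$.

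Alternatively one can bypass Theorem~\ref{thm27}: since $X$ is first countable and countably compact it is sequentially compact, and it is regular and (as just noted) non-compact, so Theorem~\ref{thmMain2} directly produces an uncountable free sequence in $X$; as $X$ is first countable, hence of character $\le\aleph_1$, the same theorem upgrades this to a copy of $\omega_1$. Either way, the only real content is the opening observation that for countably compact $T_3$ spaces ``locally countable'' can be strengthened to ``locally compact and first countable''; that is the step I would be most careful about, and everything past it is a routine application of results already established.
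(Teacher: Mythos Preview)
Your proof is correct and follows essentially the same route as the paper: show $X$ is locally compact and first countable, observe it cannot be compact, invoke Theorem~\ref{thm27} to get a perfect pre-image of $\omega_1$, then use first countability and $\PPI$ to get a copy of $\omega_1$. The paper simply asserts ``locally compact'' and ``first countable'' without justification, whereas you supply the details; your alternative route via Theorem~\ref{thmMain2} directly (using that first countable countably compact spaces are sequentially compact) is also valid and is arguably cleaner, since it avoids the detour through Theorem~\ref{thm27}.
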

\begin{proof}
Since $X$ is locally countable, it cannot be compact.  It is locally compact, so by the Theorem, it must include a perfect pre-image of $\omega_1$.  It is first countable, so then must include a copy of $\omega_1$.
\end{proof}

\section{$\omega$-bounded spaces}
Here are some more applications of PFA$(S)[S]$ to countably compact spaces.  Recall:

\begin{defn}
A space is \textbf{$\mathbf{\omega}$-bounded} if every countable subset of it has compact closure.
\end{defn}
It is known that:
\begin{lem}[~{\cite{G}}]\label{lem:boundedPreimage}
An $\omega$-bounded space which is not compact includes a perfect pre-image of $\omega_1$.
\end{lem}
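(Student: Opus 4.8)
The plan is to build the perfect pre-image of $\omega_1$ as a ``thin'' closed subspace $Y\subseteq X$ carrying an explicit filtration by compact, relatively clopen pieces. The organising observation is that a Hausdorff space $Y$ is a perfect pre-image of $\omega_1$ whenever $Y=\bigcup_{\alpha<\omega_1}C_\alpha$ for an increasing family $\langle C_\alpha:\alpha<\omega_1\rangle$ of compact sets, each clopen in $Y$, with $C_\lambda=\overline{\bigcup_{\beta<\lambda}C_\beta}$ at limits and with $C_\alpha\neq Y$ for every $\alpha$: the level map $\ell(y)=\min\{\alpha:y\in C_\alpha\}$ is then continuous (the preimage of a basic clopen interval of $\omega_1$ is a difference of two of the $C_\alpha$'s, hence clopen, and such intervals form a base of $\omega_1$), has compact fibres (each $\ell^{-1}(\gamma)$ is a clopen subset of the compact $C_\gamma$), satisfies $\ell^{-1}([0,\alpha])=C_\alpha$ so that preimages of compacta are compact and $\ell$ is closed, and has cofinal, hence club, hence $\cong\omega_1$, range. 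Moreover $Y=\bigcup_\alpha C_\alpha$ is automatically closed in $X$, since any countable subset of it lies in some $C_\alpha$. So it suffices to produce such a filtration inside $X$.

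The construction proceeds by recursion on $\alpha<\omega_1$, choosing at each successor stage a new point together with an open ``insulating'' neighbourhood of the compact set built so far. The two features of $\omega$-boundedness that drive it are: (i) the closure of a countable union of compact sets is compact (cover each piece by a countable dense set and apply $\omega$-boundedness once), so the choice $C_\lambda=\overline{\bigcup_{\beta<\lambda}C_\beta}$ at limits is legitimate and the recursion never breaks there; and (ii) a $\sigma$-compact $\omega$-bounded space is Lindel\"of and countably compact, hence compact, which guarantees that each proper initial union of compacta is properly contained in the object being built, so the recursion can keep adding genuinely new points. Once the filtration is in place, the resulting $Y=\bigcup_\alpha C_\alpha$ is \emph{not} compact: a complete accumulation point of the set of points added at successor stages (a set of size $\aleph_1$) would have to lie in some compact clopen level $C_\gamma$, which however contains only countably many of those points --- contradiction. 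Hence $\ell$ is a genuine perfect surjection of the closed subspace $Y$ onto a copy of $\omega_1$.

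The heart of the matter --- the step where real work is required --- is arranging the neighbourhoods so that every level $C_\gamma$ really is \emph{clopen} in $Y$, i.e.\ so that neither the points added at later stages nor the limit points created at later limit stages accumulate at $C_\gamma$. When $X$ is locally compact this is easy: take each insulating neighbourhood with compact closure, so that at stage $\alpha$ the union $\bigcup_{\beta<\alpha}\overline{V_\beta}$ of the earlier ones is $\sigma$-compact with compact --- hence proper --- closure, leaving room to pick a new point outside it. In the general $\omega$-bounded case compact sets need not have compact neighbourhoods, so the bookkeeping must be organised more carefully, in effect passing to a closed subspace on which consecutive levels can be separated by relatively clopen sets; this delicate point is the substance of Gruenhage's argument in \cite{G}, and I would follow it there. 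Everything else --- the reduction to a clopen filtration, the behaviour at limit stages, and the non-compactness of $Y$ --- is soft, using only regularity, $\omega$-boundedness, and the regularity of $\aleph_1$.
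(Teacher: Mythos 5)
You have correctly identified the target structure: a space filtered by an increasing $\omega_1$-chain of compact sets, each clopen in the union, strictly increasing, with $C_\lambda=\overline{\bigcup_{\beta<\lambda}C_\beta}$ at limits, does map perfectly onto $\omega_1$ via the level map, and that reduction is sound. The problem is that your write-up never actually constructs such a filtration: you say yourself that arranging the clopenness of the levels in a general $\omega$-bounded space ``is the substance of Gruenhage's argument in \cite{G}, and I would follow it there.'' That step \emph{is} the lemma; everything you do prove is soft. (For what it is worth, the paper offers no proof either --- the lemma is simply quoted from \cite{G} --- so there is nothing in the paper to check your missing step against.) The gap is genuine and not cosmetic. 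The naive recursion --- pick a point outside a finite subcover of the compact set built so far, close off, take closures at limits --- produces an increasing $\omega_1$-chain of compacta with no control over whether points added later accumulate on earlier levels; an increasing union of $\omega_1$ compacta can even be compact (e.g.\ $\omega_1+1$ is a strictly increasing union of the compacta $[0,\alpha]\cup\{\omega_1\}$), so without the insulation one gets nothing. Your locally compact repair works, but for a slightly different reason than you give: a countable union of compact closures $\overline{V_\beta}$ is Lindel\"of, hence a proper subset of the non-Lindel\"of space $X$, so one can always escape it and every later point avoids each open $V_\beta$. Without local compactness the sets $\overline{V_\beta}$ need not be compact, a countable union of them can a priori exhaust $X$ (or leave only a residue inside the next compact level) by some limit stage, and the recursion dies exactly there. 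Some new idea is required at that point, and your proposal does not contain one.

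Two of the ``soft'' claims are also wrong as stated, though both are repairable. First, your justification of (i) --- ``cover each piece by a countable dense set and apply $\omega$-boundedness once'' --- assumes compact subsets are separable, which is false in general (e.g.\ $2^{\omega_1}$); the standard fix is to carry as an inductive invariant that each $C_\alpha$ is the closure of a countable set, so that $\omega$-boundedness applies directly to the union of the generating countable sets at limit stages. Second, the assertion that $Y=\bigcup_\alpha C_\alpha$ is ``automatically closed in $X$, since any countable subset of it lies in some $C_\alpha$'' is a non sequitur unless $X$ is countably tight; fortunately closedness of $Y$ in $X$ is irrelevant to the statement, which only asks for a subspace admitting a perfect map onto $\omega_1$.
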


For hereditarily normal spaces, under PFA$(S)[S]$ we can improve ``perfect pre-image" to copy:

\begin{sthm}\label{thm:omegaBoundedCopy}
PFA$(S)[S]$ implies every hereditarily normal, $\omega$-bounded, non-compact space includes a copy of $\omega_1$.
\end{sthm}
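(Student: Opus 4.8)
The plan is to apply Theorem~\ref{thmMain2}: I will produce, inside $X$, a closed subspace which is sequentially compact, non-compact, regular, and of character $\leq \aleph_1$. First I would invoke Lemma~\ref{lem:boundedPreimage} to get a closed subspace $Z\subseteq X$ carrying a perfect surjection $f\colon Z\to\omega_1$. Then $Z$ is hereditarily normal (being a subspace of $X$) and $\omega$-bounded (being a closed subspace of an $\omega$-bounded space). Since $[0,\gamma]$ is compact and $f$ is perfect, every initial segment $L_\gamma:=f^{-1}([0,\gamma])$ is compact, every fibre $K_\gamma:=f^{-1}(\gamma)$ is a compact hereditarily normal space, and $Z$ is in fact locally compact (any $z$ has a neighbourhood whose $f$-image is bounded below $f(z)+1$, hence with compact closure inside some $L_\gamma$). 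Note too that every \emph{countable} subset of $Z$ has compact, hence bounded, image in $\omega_1$, so it lies in some $L_\gamma$.

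The second, and I expect hardest, step is to show that -- unless some fibre $K_\gamma$ already contains a copy of $\omega_1$, in which case we are immediately done -- every $K_\gamma$ is first countable (or at least: countably tight of character $\leq\aleph_1$). This is where hereditary normality is essential, and it is not automatic: a compact hereditarily normal space with no copy of $\omega_1$ need not be first countable (e.g.\ the one-point compactification of a discrete space of size $\aleph_2$), so it is the global hereditary normality of $Z$, not just of the fibre, that must be used. The mechanism I would use is a Kat\v etov-type argument: a fibre $K_\alpha$ failing to be first countable carries a point of uncountable character, or a non-$G_\delta$ closed set, and, combined with the fibres $K_{\beta_n}$ for a sequence $\beta_n\uparrow\alpha$ and the closed subspace $K_\alpha\cup\bigcup_n K_{\beta_n}$ of $Z$, this yields a non-normal subspace -- the same phenomenon behind Kat\v etov's theorem that if $A\times B$ is hereditarily normal with $B$ a compact non-perfectly-normal space then $A$ has no countable non-closed subspace, transported here through the perfect map $f$ in place of a product projection. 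The delicate point is precisely that $Z$ need not be a product over $\omega_1$: the ``horizontal copies'' of a fibre used in Kat\v etov's argument must be replaced by the genuinely varying fibres $K_{\beta_n}$, and, where needed, Lemma~\ref{lemTodorcevic} can be used to upgrade countable tightness of the compact $L_\gamma$'s to sequentiality to run the argument.

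Granting that, the remainder is routine. Each $L_\gamma$ is then compact and first countable, hence sequentially compact; since every sequence in $Z$ lies in some $L_\gamma$, $Z$ is sequentially compact. If $Z$ were not countably tight, a witnessing pair $(p,A)$ would localize: choosing an open $U\ni p$ with $f[U]\subseteq[0,f(p)]$, so that $U\subseteq L_{f(p)}$, the pair $(p,A\cap U)$ witnesses the failure of countable tightness inside the first countable space $L_{f(p)}$, which is absurd; hence $Z$ is countably tight. Finally, a neighbourhood base at $z\in Z$ is obtained from a countable neighbourhood base at $z$ in $K_{f(z)}$ together with the $\aleph_1$ ``vertical'' neighbourhoods $f^{-1}((\delta,f(z)])$ ($\delta<f(z)$), so $\chi(Z)\leq\aleph_1$. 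Thus $Z$ is a sequentially compact, non-compact, regular space of character $\leq\aleph_1$, and Theorem~\ref{thmMain2} produces a copy of $\omega_1$ inside $Z$, hence inside $X$.
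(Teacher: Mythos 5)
Your overall architecture is the same as the paper's: apply Lemma~\ref{lem:boundedPreimage} to extract a perfect pre-image $Z$ of $\omega_1$ (hereditarily normal as a subspace of $X$), upgrade it to something of countable character, and then invoke $\PPI$ (or Theorem~\ref{thmMain2}). The problem is the middle step. The paper does \emph{not} prove it here: it quotes Lemma~\ref{lem:dt1} from \cite{DT1}, namely that under PFA$(S)[S]$ every hereditarily normal perfect pre-image of $\omega_1$ includes a \emph{first countable} perfect pre-image of $\omega_1$. That is a substantive theorem with its own proof elsewhere. Your proposal replaces it with the claim that (unless some fibre already contains a copy of $\omega_1$) every fibre $K_\gamma$ is first countable, justified only by a gestured ``Kat\v etov-type argument.'' You yourself identify the obstruction -- $Z$ is not a product, so there are no horizontal copies of a fibre to pair against a non-$G_\delta$ point, and the fibres $K_{\beta_n}$ for $\beta_n\uparrow\alpha$ can be arbitrary (even singletons), in which case the subspace $K_\alpha\cup\bigcup_n K_{\beta_n}$ carries no Kat\v etov-style tension at all -- and you do not overcome it. As written, this step is a genuine gap, and it is exactly where all the content of the theorem lives.

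Note also that the target you set yourself is stronger than what Lemma~\ref{lem:dt1} actually delivers: the lemma produces \emph{some} first countable perfect pre-image of $\omega_1$ inside $Z$, not first countability of the fibres of $Z$ itself, so even the dichotomy ``every fibre is first countable or some fibre contains a copy of $\omega_1$'' is not obviously the right thing to aim for. The rest of your argument (sequential compactness of $Z$ from compactness of the clopen initial segments $L_\gamma$, the pseudocharacter computation giving $\chi(Z)\leq\aleph_1$, and the appeal to Theorem~\ref{thmMain2}) is fine once the missing step is granted -- though at that point it is simpler to note that a first countable perfect pre-image of $\omega_1$ yields a copy of $\omega_1$ directly by $\PPI$, which is what the paper does. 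To repair the proof, either cite Lemma~\ref{lem:dt1} or supply its proof; the sketch you give is not one.
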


\begin{proof}
Apply Lemma \ref{lem:boundedPreimage}, $\PPI$, and
Lemma \ref{lem:dt1}.
\end{proof}

\begin{lem}[~{\cite{DT1}}]
PFA$(S)[S]$ implies every\label{lem:dt1}  
hereditarily normal perfect pre-image of $\omega_1$ includes a first
countable perfect pre-image of $\omega_1$ and hence a copy of
$\omega_1$.  
\end{lem}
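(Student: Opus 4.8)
The strategy is to reduce to the first countable case, which is handled by $\PPI$ (the Corollary to Theorem~\ref{thmMain2}): it suffices to produce a \emph{closed} subspace $Y\subseteq X$ that is first countable and whose image under the given perfect map $f\colon X\to\omega_1$ is cofinal --- hence closed unbounded --- in $\omega_1$. For then $f\restriction Y\colon Y\to f[Y]$ is a perfect map of $Y$ onto a space homeomorphic to $\omega_1$, so $Y$ is a first countable perfect pre-image of $\omega_1$, and $\PPI$ places a copy of $\omega_1$ inside $Y\subseteq X$.

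First I would record the standing facts: $X$ is countably compact, locally compact, and non-compact, each $K_\beta := f^{-1}([0,\beta])$ is compact, and, being closed in $X$, each $K_\beta$ and each fibre $f^{-1}(\gamma)$ is hereditarily normal. By Zorn's Lemma --- using that a decreasing chain of nonempty compact fibres has nonempty intersection --- I may replace $X$ by a closed subspace on which $f$ is irreducible, so that every nonempty open subset of $X$ contains a whole fibre. Next I would show each $K_\beta$ is sequentially compact: if some fibre $f^{-1}(\gamma)$ contained a copy of $\omega_1$ we would already be done, and otherwise one argues --- this is where PFA$(S)[S]$ and hereditary normality first enter --- that each compact hereditarily normal fibre is countably tight; then by Lemma~\ref{lemTodorcevic} each $K_\beta$ is compact and sequential, hence sequentially compact. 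In particular $X$ is sequentially compact, and a sequence whose $f$-values converge upward to a limit $\delta$ has a subsequence converging to a point of $f^{-1}(\delta)$.

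I would then carry out a transfinite recursion producing $Y$ as an increasing continuous union $\bigcup_{\xi<\omega_1}Y_\xi$ of compact first countable subspaces, where each $Y_\xi\subseteq K_{\delta_\xi}$ for a strictly increasing continuous sequence $\langle\delta_\xi\rangle$, where the successor step extends upward by (essentially) a single point over a new level $\delta_{\xi+1}$ without adding anything over $[0,\delta_\xi]$, and where the limit step takes the closure in $K_{\delta_\lambda}$ (which stays compact since $\delta_\lambda=\sup_{\xi<\lambda}\delta_\xi$). Along the way one builds in, at each limit level, a convergent $\omega$-sequence furnished by sequential compactness of $K_{\delta_\lambda}$, so that the accumulation \emph{from below} onto the new fibre of $Y$ is controlled. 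The bookkeeping that makes this coherent --- keeping the partial closures ``small at the top'', separating the part already built from the part still to come, and arranging that each newly appearing boundary point has countable character in $Y$ --- is driven by hereditary normality of $X$ together with the sequential structure afforded by Lemma~\ref{lemTodorcevic}. The output is a closed first countable $Y$ with $f[Y]$ closed unbounded, after which $\PPI$ finishes.

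The main obstacle is precisely the limit step of this recursion: showing that the closure of what has been built so far remains first countable at the ``new'' fibre, i.e.\ that the accumulation from below can be forced to be a single (or at least a first countable) point. This is where the full strength of hereditary normality of $X$ --- not merely of the compact fibres --- and of the Moore--Mr\'owka-type consequence Lemma~\ref{lemTodorcevic} of PFA$(S)[S]$ is used, and a careful treatment of this step is the technical heart of the argument; the rest is essentially bookkeeping together with the standard fact that a perfect pre-image of a first countable space with first countable fibres is first countable.
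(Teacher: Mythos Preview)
The paper does not prove this lemma at all: it is simply quoted from \cite{DT1}, so there is no in-paper argument to compare your proposal against. What I can do is assess your sketch on its own merits.

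Your overall strategy---produce a closed first countable subspace whose $f$-image is a club, then invoke $\PPI$---is the right shape, and the final implication is correct. But the proposal has two genuine gaps.

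First, the reduction to sequential $K_\beta$ is not established. Your dichotomy ``either some fibre $f^{-1}(\gamma)$ contains a copy of $\omega_1$, or each fibre is countably tight'' does no work: fibres are compact, so they cannot contain a closed copy of $\omega_1$, and the alternative branch is just the assertion ``one argues'' with no argument. Moreover, countable tightness of the fibres would not by itself give countable tightness of $K_\beta$; you need it for $K_\beta$ directly. (Under PFA$(S)[S]$ there are routes to this---e.g.\ via the results in Section~4 of this paper on countable subsets of countably compact $T_5$ spaces having first countable closure---but you have not indicated one.)

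Second, and more seriously, you explicitly decline to prove the limit step of the recursion, calling it ``the technical heart of the argument'' while only gesturing at hereditary normality and Lemma~\ref{lemTodorcevic}. That step is exactly where the content lies: one must control how the closure of $\bigcup_{\xi<\lambda}Y_\xi$ meets the new fibre $f^{-1}(\delta_\lambda)$ so that the result stays first countable, and ``bookkeeping'' does not cover this. Without that argument, what you have is a plausible outline rather than a proof. Since the lemma is imported from \cite{DT1}, the honest move here is either to supply the missing limit-step argument in full or to cite \cite{DT1} as the paper does.
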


Surprisingly, we can do considerably better:

\begin{sthm}\label{thm43}
PFA$(S)[S]$ implies every hereditarily normal, countably compact, non-compact space includes a copy of $\omega_1$.
\end{sthm}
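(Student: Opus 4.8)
The plan is to strengthen the conclusion to: PFA$(S)[S]$ implies every hereditarily normal, countably compact, non-compact space $X$ includes a \emph{perfect pre-image of $\omega_1$}. This suffices, for such a pre-image is again hereditarily normal, so Lemma \ref{lem:dt1} gives a first countable perfect pre-image of $\omega_1$ inside it, and $(\PPI)$ --- the Corollary to Theorem \ref{thmMain2} --- then produces a copy of $\omega_1$.

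Two sub-cases are handled by results already available. If $X$ is $\omega$-bounded, Lemma \ref{lem:boundedPreimage} gives the pre-image outright (Theorem \ref{thm:omegaBoundedCopy} even gives the full conclusion). If $X$ is not $\omega$-bounded, fix a countable $D\subseteq X$ with $\overline D$ non-compact; $\overline D$ is closed in $X$, hence still countably compact and hereditarily normal, it is separable and non-compact, and a perfect pre-image of $\omega_1$ inside $\overline D$ sits inside $X$ --- so we may assume $X$ is separable. If this separable $X$ happens to be locally compact, Theorem \ref{thm27} supplies a perfect pre-image of $\omega_1$ since $X$ is countably compact and not compact. So the remaining case is: $X$ separable, hereditarily normal, countably compact, non-compact, not locally compact.

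Here I would pass to the Stone--\v{C}ech compactification: $X$ is pseudocompact, so it is dense in the compact, separable space $\beta X$. If $\beta X$ is countably tight then, by Lemma \ref{lemTodorcevic}, it is sequential, hence sequentially compact; since $X$ is dense and countably compact and $\beta X$ is Hausdorff, a short argument (a subsequence converging in $\beta X$ must converge to a cluster point that countable compactness places in $X$) shows $X$ itself is sequentially compact, and then Theorem \ref{thmMain2} provides an uncountable free sequence $\langle x_\alpha:\alpha<\omega_1\rangle$ \emph{inside} $X$. Hereditary normality now converts this into an ordinal map: setting $Z=\overline{\{x_\alpha:\alpha<\omega_1\}}$ and $A_\alpha=\overline{\{x_\beta:\beta<\alpha\}}^{\,Z}$, freeness makes $A_\alpha$ disjoint from $\overline{\{x_\beta:\beta\ge\alpha\}}^{\,Z}$ while density makes these two sets cover $Z$, so each $A_\alpha$ is \emph{clopen} in $Z$, the chain $\langle A_\alpha\rangle$ is increasing, and $z\mapsto\min\{\alpha: z\in A_\alpha\}$ (with value $\omega_1$ if no such $\alpha$ exists) is a continuous surjection of $Z$ onto $\omega_1+1$ whose fibers over successor ordinals are singletons. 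The remaining work is to refine the choice of the free sequence --- thinning it, and using countable compactness of $Z$ at the (necessarily countable-cofinality) limit stages while separability keeps the recursion countable at each step --- so that the fibers over limit ordinals and over $\omega_1$ are compact, i.e., so that the map is perfect; then its restriction over $\omega_1$ is a perfect pre-image of $\omega_1$ lying inside $X$.

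The main obstacle is exactly that refinement: turning a continuous surjection onto $\omega_1+1$ with merely countably compact fibers into a genuinely perfect map. Closely related is the sub-case where $\beta X$ is \emph{not} countably tight: then Arhangel'ski\u{\i}'s theorem only locates a free sequence in the compactum $\beta X$, not in $X$, and one must work to push the construction back down into the dense set --- this is where hereditary normality of $X$ (used in the form that separated sets can be put into disjoint open sets) should again do the job, but it is the delicate point. All the reductions are in ZFC or quote earlier results, so PFA$(S)[S]$ is used only through Lemma \ref{lemTodorcevic}, Theorem \ref{thmMain2} together with $(\PPI)$, Theorem \ref{thm27}, and Lemma \ref{lem:dt1}.
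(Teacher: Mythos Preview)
Your reduction to the separable case is exactly right and matches the paper: if $X$ is $\omega$-bounded you are done by Theorem~\ref{thm:omegaBoundedCopy}, and otherwise the closure of a countable set with non-compact closure is a separable, hereditarily normal, countably compact, non-compact subspace. The divergence is in how the separable case is handled.

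The paper does \emph{not} try to locate a perfect pre-image of $\omega_1$ inside the separable space. Instead it proves (Theorem~\ref{thm44}) that under PFA$(S)[S]$ a separable, hereditarily normal, countably compact space is \emph{compact}; so this case is vacuous, the original $X$ must have been $\omega$-bounded, and Theorem~\ref{thm:omegaBoundedCopy} finishes. The proof of Theorem~\ref{thm44} has two ingredients you are missing. First, sequential compactness of $X$ is obtained not via $\beta X$ but by working in the ground model PFA$(S)$: one examines families $F_s$ of real-valued functions that $s\in S$ forces to extend continuously over $\dot X$, and a short recursion using $\mathfrak p=\mathfrak c$ either produces a converging subsequence or embeds a copy of $\omega$ into a cube $[0,1]^{F_s}$ whose closure is compact non-sequential, yielding (via \cite{DT}) an uncountable free sequence that pushes forward to one in $X$. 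Second, once $X$ is sequentially compact and hence (by Theorem~\ref{thmMain2}) contains an uncountable free sequence, one invokes Nyikos \cite{Ny3}: under $\mathfrak q=\aleph_1$ (which holds under $\CW$, hence under PFA$(S)[S]$), a separable hereditarily normal countably compact space has \emph{no} uncountable free sequence. Contradiction.

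Your route through $\beta X$ has a genuine gap even before the ``refinement'' you flag. There is no reason for $\beta X$ to be countably tight: $X$ is only assumed Tychonoff (via hereditary normality), and Stone--\v Cech compactifications of separable countably compact spaces can be badly behaved. So the case split ``$\beta X$ countably tight / not'' does not get off the ground, and in the second branch you have no mechanism for pulling a free sequence from $\beta X$ back into $X$. Note, however, that in your first branch you actually arrive at exactly the configuration the paper exploits --- an uncountable free sequence in a separable hereditarily normal countably compact space --- and had you cited Nyikos's $\mathfrak q=\aleph_1$ result at that moment you would have reached the same contradiction without needing to manufacture a perfect map at all. The attempt to upgrade the clopen filtration $\langle A_\alpha\rangle$ to a perfect map onto $\omega_1$ is both unnecessary and, as you suspected, genuinely obstructed: the fibers over limit ordinals need not be compact (or even nonempty) without further hypotheses on $Z$.
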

\begin{proof}
The first step is to prove:

\begin{sthm}\label{thm44}
PFA$(S)[S]$ implies separable, hereditarily normal, countably compact spaces are compact.
\end{sthm}

\begin{proof}
Let $\dot X$ be our $S$-name of a
hereditarily normal separable countably compact
space. We first explain  that it suffices to show 
that $\dot X$ is forced to be sequentially compact. 

The argument is by contradiction and goes as follows. If $\dot X$ is forced  to be sequentially compact but not compact, then  by Theorem \ref{thmMain2} $\dot X$ is forced to contain an uncountable free sequence. We then invoke \cite{Ny3} which tells us that under $\mf{q} = \aleph_1$ -- and hence under $\CW$ -- separable, hereditarily normal, countably compact spaces do not have uncountable free sequences.

Now we show that $\dot X$ is sequentially compact. 
Suppose that $\omega$ is any infinite discrete
subset of $\dot X$,
and assume that $\omega$ is forced to have no converging subsequence. 

If we consult the proof of Theorem 3.3 in \cite{D1} we find in the proof that there must be a condition $s\in S$, an infinite subset
 $b\subset \omega$, 
and a family  $F_s$ of functions from $\omega$ into $[0,1]$ such
that $s$ forces each $f\in F_s$ has a continuous extension to all of
$\dot X$, and for each infinite $a\subset b$, there is an $f\in F_s$
such that $s$ forces that $f[a]$ is not a converging sequence in
$[0,1]$.  Since the proof is short we provide it here.
 \bigskip
 
   For each $t\in S$,   let $F_t$ denote the
    set of all $f\in [0,1]^\omega$ such that $t$ forces that $f$ has a
    continuous extension to all of $X$.  Say that $a\subset \omega$ is
    split by $F_t$ if there is an $f\in F_t$ such that $t$
    forces that the set $\{
    f(n) : n\in a\}$ does not converge.
 Fix any well-ordering of $S$ in order
      type $\omega_1$ and recursively choose,
      if possible,  a mod
      finite, length $\omega_1$  chain of infinite subsets $\{ a_t : t\in S\}$ 
      of $\omega$ so that  $a_t$ is not split by $F_t$. 
  We are working in PFA(S) (a model 
of $\mathfrak p
      =\mathfrak c$) so we may choose    an $a$ 
       which is mod finite contained in $a_t$ for all $t\in S$. It
       is easy to see that 1 forces that $a$ is a converging sequence in
       the countably compact space $X$.  Therefore, 
       this induction must have stopped at some $s\in S$
       and if we choose any infinite $a_s$ mod finite contained in $a_t$
       for each $t$ coming before $s$ in the well-ordering,
  we have have the desired   pair $b, F_s $.

Now $F_s$ is a family of actual functions, not just names of
functions. This means there is an embedding $e$ of $\omega$ into
$[0,1]^{F_s}$ where $e(n) = e_n$ is defined by $e_n(f) = f(n)$. 
It follows that $\{ e_n : n\in b\}$ is a completely divergent sequence
in $[0,1]^{F_s}$. 
This of course means that the closure $K$ of $\{e_n: n\in b\}$ is a
compact non-sequential space. 
This is happening in a model of PFA(S), and we prove in \cite{DT} that there is an uncountable free sequence in $K$. 
It follows easily that there is a $\aleph_1$-sized subset $F_s'$ 
of $F_s$ satisfying that $\{ e_n \restriction F_s' : n\in b\}$ 
also has an uncountable free sequence in its closure. 
Let $\{ y_\alpha  : \alpha \in \omega_1\} \subset [0,1]^{F_s'}$
 denote this free sequence. Since the PFA(S) model is a model of
 $\mathfrak p>\omega_1$, we may fix, for each $\alpha\in \omega_1$, 
 a subset $a_\alpha$ of $b$ so that $\{ e_n \restriction F_s'
 : n\in a_\alpha\}$ converges to $y_\alpha$.

Now we pass to the PFA(S)[S] extension
and have a look at $X$. Clearly the product of the
 family of continuous extensions of
the functions in $F_s'$, call this $\varphi$,
 is a continuous function  from $X$ into $[0,1]^{F_s'}$.
For each $\alpha \in \omega_1$, let $x_\alpha$
be any limit point of $a_\alpha$.
For each $n\in b$, $\varphi(n)$ is equal to $x_n\restriction F_s'$,
and therefore, $\varphi(x_\alpha)$ is equal to $y_\alpha$.
It follows immediately that
 $\{ y_\alpha : \alpha\in \omega_1\}$ is
a free sequence.
\end{proof}

Theorem \ref{thm44} gives:

\begin{scor}
PFA$(S)[S]$ implies hereditarily normal, countably compact spaces are $\omega$-bounded.
\end{scor}
We then apply \ref{thm:omegaBoundedCopy} to obtain \ref{thm43}.
\end{proof}

\begin{rem}
Under PFA, one can replace the ``hereditarily" in Theorem \ref{thm44}
by ``first countable", but not under PFA$(S)[S]$.  PFA$(S)[S]$ implies
$\mathfrak{p} = \aleph_1$ \cite{To} and under that hypothesis, there
is a locally compact, locally countable, separable, normal, countably
compact space which is not compact.  This space is due to Franklin and
Rajagopalan \cite{Fr}.  See discussion in Section 7 of \cite{vD} and
Section 2 of \cite{N}. 
\end{rem}

The PFA version of this next result was proven by
 Eisworth \cite{eisworth2002}.

\begin{sthm}\label{thmpreimagecopy}
PFA$(S)[S]$ implies a countably tight perfect pre-image of $\omega_1$
includes a copy of $\omega_1$. 
\end{sthm}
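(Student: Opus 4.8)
The plan is to reduce the statement to Theorem~\ref{thmMain2} by exhibiting, inside a countably tight perfect pre-image $X$ of $\omega_1$, a subspace which is sequentially compact, non-compact, and of character $\leq\aleph_1$, so that the second half of Theorem~\ref{thmMain2} supplies a copy of $\omega_1$. Let $\pi\colon X\to\omega_1$ be a perfect map. First I would observe that $X$ itself is countably compact: a closed discrete infinite set would, by properness of $\pi$, project to a set meeting each fibre $\pi^{-1}(\alpha)$ in a finite set and accumulating nowhere in $\omega_1$, which is impossible since every infinite subset of $\omega_1$ has a limit point. So $X$ is countably compact and countably tight. By Lemma~\ref{lemTodorcevic} (Todorcevic, under PFA$(S)[S]$), a compact countably tight space is sequential; applying this to a compactification, or arguing directly as in the proof of Theorem~\ref{thm27}, one gets that $X$ is sequentially compact. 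Thus $X$ is a sequentially compact, non-compact, regular space, and Theorem~\ref{thmMain2} already gives an uncountable free sequence; the work is to upgrade this to a genuine copy of $\omega_1$, which requires controlling character.

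The key step is to find within $X$ a non-compact subspace of character $\leq\aleph_1$ whose perfect-pre-image structure is preserved. Here I would use the fibre structure: for each $\alpha<\omega_1$ the fibre $\pi^{-1}(\alpha)$ is compact and countably tight, hence (by Lemma~\ref{lemTodorcevic}) sequential, and in a sequential compact space one can find points of character $\leq\aleph_1$ — indeed, under PFA$(S)[S]$ one has the stronger $\mathbf{\Sigma}$-type conclusions, but even the weaker statement that compact sequential spaces have points of small character (via $\mathfrak{b}>\aleph_1$ or Moore--Mr\'owka consequences) should suffice. Choose $x_\alpha\in\pi^{-1}(\alpha)$ of character $\leq\aleph_1$ in its fibre, and let $Y=\{x_\alpha:\alpha<\omega_1\}$ together with enough limit points; one wants the restriction $\pi\restriction\overline{Y}$ still to be a perfect map onto $\omega_1$ (or a club), so that $\overline{Y}$ is again a countably tight perfect pre-image of $\omega_1$, now with a selected "spine" of points of small character. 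An elementary submodel argument will be convenient: take $M\prec H(\theta)$ with $|M|=\aleph_1$, $M\cap\omega_1\in\omega_1$, $X,\pi\in M$, and consider $X_M=\overline{X\cap M}$; standard reflection shows $\pi\restriction X_M$ is perfect onto its image, which contains $M\cap\omega_1$, and $X_M$ has a dense subset of size $\aleph_1$, which together with countable tightness forces character $\leq\aleph_1$ at the relevant points.

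The main obstacle I expect is precisely this character control: countable tightness plus a size-$\aleph_1$ dense set does not by itself bound character, so one genuinely needs the sequential structure (hence Lemma~\ref{lemTodorcevic}, or the large-cardinal-free substitute used elsewhere in the paper) to pass from "countably tight" to "first countable along a cofinal set of fibres." Once the reduction to a first-countable, sequentially compact, non-compact space is complete, one either invokes the Corollary to Theorem~\ref{thmMain2} directly or re-runs the free-sequence argument: an uncountable free sequence $\{y_\alpha:\alpha<\omega_1\}$ in a first-countable countably compact space, with the perfect map witnessing that it cannot be "absorbed" into a compact fibre, must be homeomorphic to $\omega_1$ because its closure is then a countably compact, countably tight, first-countable space in which the free sequence is cofinal and the subspace topology on it agrees with the order topology. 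I would close by remarking that, as with the earlier theorems in this section, the argument uses PFA$(S)[S]$ only through Theorem~\ref{thmMain2} and Lemma~\ref{lemTodorcevic}, so the version of the proof avoiding the latter (as indicated for Theorem~\ref{thm27} via \cite{D1}) also applies here.
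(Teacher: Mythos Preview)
Your proposal has a genuine gap, and you essentially identify it yourself: the reduction to a subspace of character $\leq\aleph_1$ does not go through with the tools you describe. Finding a point of relatively countable character in each compact fibre (which is available, via \cite[8.5]{To}) does not produce a perfect pre-image of $\omega_1$ of character $\leq\aleph_1$; the closure of your selected ``spine'' $\{x_\alpha:\alpha<\omega_1\}$ will in general have large character at the new limit points. The elementary-submodel route fares no better: as you note, a dense set of size $\aleph_1$ together with countable tightness does not bound character, and sequentiality does not help here either. Your closing paragraph then assumes first countability without having obtained it, and the assertion that an uncountable free sequence in such a space ``must be homeomorphic to $\omega_1$'' is exactly the content of the theorem, not a step one can wave through. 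In short, the plan to invoke Theorem~\ref{thmMain2} directly cannot be completed along these lines; the missing idea is how to manufacture, not character $\leq\aleph_1$ on a whole subspace, but merely enough ``relative $G_\delta$'' structure at a carefully chosen sequence of points to make a convergence argument work.

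The paper's proof is of a quite different nature: it is a direct application of PFA$(S)$ to a specially designed proper, $S$-preserving poset, not a reduction to Theorem~\ref{thmMain2}. After using Lemma~\ref{lemTodorcevic} to assume $\dot X$ is sequential, one works in the ground model with the $S$-name $\dot X$ and builds, for each countable suitable $M\prec H(\theta)$, an $(M,\omega)$-\emph{acceptable} sequence $\langle y^M(s):s\in S_{M\cap\omega_1}\rangle$. The key technical point is that acceptability is defined so that each $y^M(s)$ evaluates to a point which is a relative $G_\delta$ in a countable product of the compact fibre spaces $X[s\oplus g](\delta)$ (this is where \cite[8.5]{To} enters), together with symmetry conditions tying the various generics $s\oplus g$ together via a maximal filter $\mathcal F$ of $S$-sequentially closed sets. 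Conditions in the poset $\mathcal P$ are finite $\in$-chains of such $M$'s with finite side conditions from $S$ and $\omega_2$; properness and $S$-preservation are inherited from the analogous poset in \cite{DT}. A sufficiently generic filter then yields a club $C\subseteq\omega_1$ and points $x_\delta=e_g(y^{M_\delta}(s_\delta))$ for $\delta\in C$, and the $(M,\omega)$-acceptability conditions are exactly what is needed to verify that $\{x_\delta:\delta\in C\}$ is a copy of $\omega_1$. So the character obstacle you flagged is real, and the resolution is not a global character bound but a pointwise relative-$G_\delta$ selection carried out simultaneously across all the branches $s\oplus g$, organized by the forcing.
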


\begin{proof}
By Lemma \ref{lemTodorcevic} (as shown in \cite{To}) 
 PFA$(S)[S]$ implies that all countably tight
compact spaces are sequential. Since a perfect pre-image of $\omega_1$
is locally compact  it will suffice to prove
this theorem for sequential perfect pre-images of $\omega_1$.
The proof is technical and requires familiarity with \cite{DT} so is
postponed to the end of the paper. 
\end{proof}

Theorem \ref{thmpreimagecopy} will be used in \cite{DT1} to prove:

\begin{prop}
	There is a model of form PFA$(S)[S]$ in which a locally compact, normal, countably tight space is paracompact if and only if its separable closed subspaces are Lindel\"of, and it does not include a copy of $\omega_1$.
\end{prop}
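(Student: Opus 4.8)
The plan is to prove the two implications separately; only the converse uses the model, the direct implication being a theorem of ZFC.

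\emph{The direct implication.} A locally compact paracompact space $X$ is a free topological sum of clopen $\sigma$-compact (hence Lindel\"of) subspaces $\{X_i\}_{i\in I}$. If $F\subseteq X$ is separable and closed, its countable dense subset meets only countably many $X_i$, and as these are clopen $F$ lies in their union, which is Lindel\"of; so $F$ is Lindel\"of. A subspace $W$ of $X$ homeomorphic to $\omega_1$ is automatically closed, since by countable tightness any point of $\overline W$ lies in $\overline A$ for a countable $A\subseteq W$, and $\overline A$ is then contained in the $X$-closure of a compact initial segment of $W$, hence in $W$; thus $W$ would be a closed, hence paracompact, subspace homeomorphic to the non-paracompact $\omega_1$ --- impossible. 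So a paracompact such $X$ satisfies both conditions.

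\emph{The converse.} Work in a model of PFA$(S)[S]$ which in addition satisfies $\bsigma^-$, $(*)$ and $(\CW)$ and in which locally compact normal spaces are collectionwise Hausdorff (the iterations used in the consistency results above, resp.\ in \cite{DT1}, can be arranged to secure all of this). Let $X$ be locally compact, normal, countably tight, with separable closed subspaces Lindel\"of and with no copy of $\omega_1$. Since countable tightness is hereditary, a perfect pre-image of $\omega_1$ sitting in $X$ would by Theorem \ref{thmpreimagecopy} contain a copy of $\omega_1$, contradicting the hypothesis; so $X$ contains no perfect pre-image of $\omega_1$, whence by \cite{B} the one-point compactification $X^*$ is countably tight, and by Lemma \ref{lemTodorcevic} it is sequential. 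From here the argument follows \cite{LT} and the proof of Theorem \ref{thmpnpc}: one checks that closures of Lindel\"of subspaces of $X$ are Lindel\"of, reduces via a Gruenhage--Junnila decomposition to the case of Lindel\"of number $\leq\aleph_1$, writes such a space as an increasing $\omega_1$-chain $\{X_\alpha\}$ of open Lindel\"of sets with $\overline{X_\alpha}\subseteq X_{\alpha+1}$ and continuous at limits, and argues paracompactness as in the $(\dagger)$ argument: were it to fail, a point chosen from each nonempty $\overline{X_\alpha}\setminus X_\alpha$ along a stationary set gives a locally countable subspace of size $\aleph_1$, which by $\bsigma^-$ for sequential spaces is $\sigma$-discrete and, using that separable closed subspaces are Lindel\"of, $\sigma$-closed-discrete; pressing down, normality, and $\aleph_1$-collectionwise Hausdorffness then produce an uncountable discrete family of open sets tracing onto a single $X_\alpha$, contradicting its Lindel\"ofness.

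The one genuinely new ingredient over \cite{LT} is Theorem \ref{thmpreimagecopy}, which replaces the use of perfect normality there in showing $X^*$ sequential. The main obstacle is that the published proofs invoke ``closed sets $G_\delta$'' --- equivalently perfect normality, which for a locally compact space also forces first countability and hence the applicability of $(\CW)$ --- at two points: to see that closures of Lindel\"of subspaces are Lindel\"of, and to refine an uncountable discrete subspace to an uncountable closed discrete one. Both must be re-derived from the weaker hypothesis that separable closed subspaces are Lindel\"of; note this hypothesis says precisely that the closure of any countable, hence of any separable, subspace is Lindel\"of (thus $\sigma$-compact), which is what should drive the analysis of the boundaries $\overline{X_\alpha}\setminus X_\alpha$. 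One must also verify that ``locally compact normal spaces are collectionwise Hausdorff'' in the model suffices in place of $(*)+(\CW)$ for a normal-space version of the Gruenhage--Junnila decomposition. Carrying this out is the technical heart, deferred to \cite{DT1}.
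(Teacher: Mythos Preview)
The paper does not actually prove this proposition: it is stated immediately after Theorem~\ref{thmpreimagecopy} only as an announcement of a result to appear in \cite{DT1}, with the single remark that Theorem~\ref{thmpreimagecopy} will be used there. There is therefore no proof in the paper to compare your attempt against.

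That said, your sketch is in line with what the paper signals. Your forward implication is a correct ZFC argument. Your converse correctly isolates Theorem~\ref{thmpreimagecopy} as the new ingredient (replacing the role perfect normality played in \cite{LT} in getting $X^*$ sequential via \cite{B} and Lemma~\ref{lemTodorcevic}), and then follows the same Gruenhage--Junnila\,/\,$\bsigma^-$\,/\,pressing-down template used for Theorem~\ref{thmpnpc} and $(\dagger)$. You are also honest about the real work left: the two places where the earlier arguments invoke ``closed sets $G_\delta$'' (upgrading discrete to $\sigma$-closed-discrete, and showing closures of Lindel\"of sets are Lindel\"of) must be redone from the weaker hypothesis ``separable closed subspaces are Lindel\"of'', and the collectionwise Hausdorff input must be secured for spaces that need not be first countable. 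You defer all of this to \cite{DT1}, which is exactly what the present paper does; so your write-up is not so much a different proof as a more explicit roadmap for the same deferred argument.
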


Just as for hereditarily normal, one can vary \ref{lem:boundedPreimage} to get

\begin{sthm}
PFA$(S)[S]$ implies a countably tight, $\omega$-bounded space which is not compact includes a copy of $\omega_1$.
\end{sthm}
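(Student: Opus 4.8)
The plan is to run the same three-step argument used for Theorem \ref{thm:omegaBoundedCopy}, simply substituting the countably tight input (Theorem \ref{thmpreimagecopy}) for the hereditarily normal input (Lemma \ref{lem:dt1}). So let $X$ be countably tight, $\omega$-bounded, and not compact, and work under PFA$(S)[S]$.

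First, apply Lemma \ref{lem:boundedPreimage}: since $X$ is $\omega$-bounded and non-compact, it includes a subspace $Z$ which is a perfect pre-image of $\omega_1$. Second, observe that countable tightness is hereditary to subspaces, so $Z$ is itself a countably tight perfect pre-image of $\omega_1$. Third, apply Theorem \ref{thmpreimagecopy}, which asserts exactly that under PFA$(S)[S]$ a countably tight perfect pre-image of $\omega_1$ includes a copy of $\omega_1$; that copy lies inside $Z\subseteq X$, which is what we wanted.

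There is essentially no obstacle at this level of the argument: all the substantive work has been pushed into Theorem \ref{thmpreimagecopy} (whose proof, as noted in the excerpt, is technical, relies on Lemma \ref{lemTodorcevic} to reduce to the sequential case, and is deferred to the end of the paper). If one were instead to prove the statement from scratch without invoking \ref{thmpreimagecopy}, the hard part would be precisely that deferred argument — showing a sequential perfect pre-image of $\omega_1$ contains a genuine copy of $\omega_1$ — and one should simply cite it rather than reproduce it here.

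One minor point worth recording for the reader: the subspace $Z$ produced by Lemma \ref{lem:boundedPreimage} is automatically locally compact (a perfect pre-image of the locally compact space $\omega_1$ is locally compact), so the hypotheses of Theorem \ref{thmpreimagecopy} are met without any further checking; this is also how that theorem is set up internally, so nothing extra is needed.
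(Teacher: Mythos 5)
Your argument is exactly the paper's proof, which simply reads ``Immediate from Lemma \ref{lem:boundedPreimage} and Theorem \ref{thmpreimagecopy}''; your explicit observation that countable tightness is hereditary, so the perfect pre-image supplied by Lemma \ref{lem:boundedPreimage} satisfies the hypotheses of Theorem \ref{thmpreimagecopy}, is the correct (and only) point to check. Nothing further is needed.
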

\begin{proof}
Immediate from \ref{lem:boundedPreimage} and \ref{thmpreimagecopy}.
\end{proof}

\begin{prob}
Does PFA$(S)[S]$ imply every non-compact, countably tight, countably
compact space includes a perfect pre-image of $\omega_1$?  If so, by
\ref{thmpreimagecopy} it would include a copy of $\omega_1$. 
\end{prob}

\section{Some problems of Nyikos}
In \cite{Ny3} Peter Nyikos raises a number of questions about
hereditarily normal countably compact spaces and settles some of them
under PFA.  We can obtain similar results under PFA$(S)[S]$ and also
answer some questions he left open.  We shall use his numbering, for
easy reference. \\ 

\noindent \textbf{Statement A}.  \textit{Every compact space of
  countable tightness is sequential}.  \\ Follows from PFA$(S)[S]$
\cite{To}. \\ 

\noindent \textbf{Statement 2} (= 1.3(2)).  \textit{Every separable, $T_5$, countably compact space is compact}.  \\ Follows from PFA$(S)[S]$: Theorem \ref{thm44}. \\

\noindent \textbf{Statement 3}  \textit{Every countably compact $T_5$ space is either compact or includes a copy of $\omega_1$}.  \\ Follows from PFA$(S)[S]$: Theorem \ref{thm43}. \\

\noindent \textbf{1.3(1)} \textit{Every free sequence in a separable, countably compact $T_5$ space is countable}.  \\ Follows from $\mf{q} = \aleph_1$, and hence from PFA$(S)[S]$.\\

\noindent \textbf{1.3(3)} \textit{Every countably compact $T_5$ space is sequentially compact}. \\ Follows from PFA$(S)[S]$.
\begin{proof}
The proof of \ref{thm44} establishes this in the separable case.  Apply the separable case to the closure of a given sequence.
\end{proof}

We thus have PFA$(S)[S]$ implies \\

\noindent \textbf{Statement 1} (1.3(4)) \textit{Every compact $T_5$ space is sequentially compact}.\\

\noindent \textbf{1.4} \textit{In a countably compact $T_5$ space, every countable subset has compact, Fr\'{e}chet-Urysohn closure}.

We can do better under PFA$(S)[S]$.  We actually get first countable
closure.  By \ref{thm44} we get compact; by \cite{To} we get
hereditarily Lindel\"of, hence first countable, since it is shown
there that PFA$(S)[S]$ implies compact, hereditarily normal, separable
spaces are hereditarily Lindel\"of. 

Nyikos' Problem 2 asks if Statement A is compatible with $\mf{q} = \aleph_1$.  It is.

He asks whether the following is consistent: \\

\noindent \textbf{Statement 5}.  \textit{Every compact separable $T_5$ space is of character $< \mf{p}$}. \\ 

Since PFA$(S)[S]$ implies such spaces are first countable, the answer
is trivially ``yes". 

Nyikos' Problem 5 asks if there is a ZFC example of a separable,
$T_5$, locally compact space of cardinality $\aleph_1$.  He points out
that if there are no locally compact $S$-spaces and $\mf{q} =
\aleph_1$, then there is a negative answer.  Since the one-point
compactification of an $S$-space is an $S$-space, PFA$(S)[S]$ yields a
negative answer. 

\section{Countably tight perfect pre-images of $\omega_1$} 
We now complete the proof of Theorem \ref{thmpreimagecopy}. 
The reader
is referred to  \cite{DT} for the final stages of the proof.
As noted above in the first steps of the proof of Theorem
\ref{thmpreimagecopy}, we may assume,
for the remainder of the section, that we have an
 $S$-name $\dot X$ of a 
sequential space with a perfect  mapping $\dot f$
onto the ordinal $\omega_1$. Note that $\dot X$ is forced
to be locally compact.
By passing to a subspace
 we can assume that the preimage of each successor ordinal is a
 singleton. Then, by simple renaming, 
we may assume that $\{ \alpha+1 : \alpha \in \omega_1\}$ 
is a dense subset of $\dot X$.
We can now 
assume that the base set for $\dot X$ is the ordinal $\omega_2$
 (since PFA(S) implies that $\mathfrak c = \omega_2$, and
   the cardinality of a sequential space of density at most $\omega_1$
   is at most  $\mathfrak c$).
Next we fix 
  an assignment of $S$-names of open neighborhood bases
 $\{ \dot U(x,\xi) : \xi \in \omega_2\}$, for each $x\in \omega_2$. 
Obviously repetitions are allowed.
 We may assume, by the continuity of $\dot f$,
 that 1 forces that 
 $\dot f[\dot U(x,\xi) ] \subset [0, \dot f(x)]$
for all $x,\xi \in \omega_2$. 

Now we discuss the special forcing
properties that a  coherent Souslin tree will have.
 Assume that $g$ is a  generic filter on $S$ viewed as a
cofinal branch.  For each $s\in S$, $o(s)$ is  the level
(order-type of domain) of $s$ in  $S$. For any $t\in S$, define
$s \oplus t$ to be the function $s\cup t\restriction[o(s),o(t))$. 
Of course when $o(t) \leq o(s)$, $s\oplus t$ is simply $s$.
 One of the
properties of $S$ ensures that $s \oplus t\in S$ for all $s,t\in S$. We
similarly define $s \oplus g$ to be the branch $\{ s \oplus t : t\in
g\}$.
 We let $\dot X[g]$ (or even $X[g]$)
denote the space obtained by evaluating the topology
 using $g$,  so $\dot X[s\oplus g]$ will be a
 different perfect pre-image of $\omega_1$ also existing in the model
 $V[g]$.  More generally for an $S$-name $\dot A$, we will
let $\dot A[g]$ denote the standard evaluation of $\dot A$ by $g$.
\bigskip

\newcommand{\rk}{\operatorname{rk}}

We recall a useful method of calculating closure in
sequential spaces.
A tree $T \subset \omega^{<\omega}$ is said to be
well-founded if it contains no  no infinite branch. 
 Let $\mathbf{WF}$ denote all  downward closed
well-founded  trees $T\subset \omega^{<\omega}$ with the property
that each non-maximal node has a full  set of immediate successors.
Such a tree has an associated rank function, $\rk_T $ which maps
elements of $T$ into $\omega_1$.  If $t\in T$ is a maximal node, 
then $\rk_t(t) = 0$, and otherwise,
$\rk_T(t)$ is equal to  $\sup \{ \rk_T(t')+1  : t < t'\in T_t\}$.
The rank of $T$ itself will be $\rk_T(\emptyset)$
and 
we let $\mathbf{WF}(\alpha)$ denote the set of trees
of rank less than $\alpha$. 
Suppose that $\sigma$ is any function from 
$\max(T) = \{t\in T: \rk_T(t)=0\}$ into  $\omega_1$ as
a subset of a space $X$. 
By induction on rank of $t\in T$, define an evaluation
 $e(\sigma,t)$ to be the limit (if it exists) 
of the sequence $\{ e(\sigma,t^\frown n ) : n\in \omega\}$. We will
say that $\sigma$ is $X$-converging if $e(\sigma,t)$ exists for all
$t\in T$. It is well-known that every point in the sequential closure
of $\omega_1\subset X$ 
will equal $e(\sigma,\emptyset)$ for some such $\sigma$.

Now, given our $S$-name $\dot X$, we will define $\Lambda$ to be the
set of all $\sigma$ as above (the underlying 
 $T\in \mathbf{WF}$ is simply the downward closure of the
 domain of $\sigma$) with the propery that $1$ forces that $\sigma$
 is converging.  
Since $S$ has the ccc property and adds no new subsets of $\omega$, 
 it  follows that if $s\in S$ forces that an ordinal
 $\zeta\in\omega_2$ is in the (sequential) closure of some
 $\delta$, then there is a $\sigma\in \Lambda$ such that $s$ forces
 that $e(\sigma,\emptyset) = \zeta$. However, even though $1$ forces
 that $\sigma$ converges, it is not true that $1$ forces that
 $e(\sigma,\emptyset) =\zeta$. Nevertheless the set $\Lambda$ makes
 for a very useful substitute for $S$-names of members of $\dot X$. 
Now that the actual ordinal value associated to $\lambda$ depends on
the generic, we will use $e_g(\sigma)$
 (and suppress the second
coordinate) to refer to the ordinal $\zeta$ in $X[g]$
that is equal to 
 $e(\sigma,\emptyset)$. Similarly, we use
 $e_s(\sigma)$ if $s$ decides this value.

\newcommand{\dom}{\operatorname{dom}}

\begin{definition} For any sequence 
$\langle \sigma_n : n\in  \omega\rangle$  
of members of $\Lambda$, 
say that $\sigma$ is constructed from 
$\langle \sigma_n : n\in  \omega\rangle$  
if for each $n\in \omega$ and 
each node $t\in \dom(\sigma)$ (a maximal node of the associated
tree) with $t(0)=n$, and each node $t_1\in \dom(\sigma_n)$,
\begin{enumerate}
\item there a node $t_2\in\dom(\sigma)$ such that
 $t_2(0)=n$, $\sigma(t_2) = \sigma_n(t_1)$, $\dom(t_2)=1+\dom(t_1)$,
 and
$t_2(1+j) = t_1(j)$ for all $j\in \dom(t_1)$,
\item there is a node $t_3\in \dom(\sigma_n)$ such that 
$\sigma(t) =\sigma_n(t_3)$,  $\dom(t)=1+\dom(t_3)$, and
$t_3(j)=t(1+j) $ for all $j\in \dom(t_3)$,
\end{enumerate}
 \end{definition}

\begin{definition}
 For\label{seql}
 each integer $n>0$, and subset $B$ of $\Lambda^n$ we  define
 the hierarchy $\{ B^{(\alpha)} : \alpha\in \omega_1\}$
by recursion.   
 For a limit $\alpha$, $B^{(\alpha)}$
 (which could also be denoted as $B^{(<\alpha)}$) will equal
  $\bigcup_{\beta<\alpha} B^{(\beta)}$. The members of $B^{(\alpha+1)}$
  for any $\alpha$, will consist of  $B^{(\alpha)}$ together with all
$\vec b\in \Lambda^n$ with the property that there is a sequence
 $\langle \vec b_\ell : \ell\in \omega\rangle$ consisting of members
 of $B^{(\alpha)}$ such that, for each $i<n$, $\vec b(i)$ is built
 from the sequence $\{ \vec b_\ell(i) : \ell\in \omega\}$.

A subset $B$ of $\Lambda^n$ will be said to be $S$-sequentially closed
if $B^{(\omega_1)} = B$.
\end{definition}

The next lemma should be obvious.

\begin{lemma} For each $A\subset \Lambda$,
 $1$ forces that $e[
A^{(\omega_1)}]$
  is a sequentially compact subset of $\dot X$.
\end{lemma}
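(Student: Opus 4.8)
The claim is that for any $A \subseteq \Lambda$, $1$ forces that $e[A^{(\omega_1)}]$ is a sequentially compact subspace of $\dot X$. The plan is to show two things: first, that $e[A^{(\omega_1)}]$ is forced to be sequentially closed in $\dot X$ (so that any sequence from it has its limit, if one exists in $\dot X$, back inside the set); and second, that any infinite subset of $e[A^{(\omega_1)}]$ is forced to contain a convergent subsequence with limit in $\dot X$, which will then automatically lie in $e[A^{(\omega_1)}]$ by the first point. Since $\dot X$ is forced to be a sequential perfect pre-image of $\omega_1$ — in particular locally compact and countably tight — and since a closed subset of $\omega_1$-many points that is sequentially compact is exactly what we want, the two halves together give the conclusion.

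For sequential closedness, fix (in the extension $V[g]$) a sequence $\langle \zeta_n : n \in \omega\rangle$ of points of $e[A^{(\omega_1)}][g] = \{e_g(\sigma) : \sigma \in A^{(\omega_1)}\}$ converging to some $\zeta \in X[g]$. For each $n$ choose $\sigma_n \in A^{(\omega_1)}$ with $e_g(\sigma_n) = \zeta_n$. The key observation is that the operation ``construct $\sigma$ from $\langle \sigma_n : n \in \omega\rangle$'' exactly mirrors, at the level of the well-founded trees, the operation ``take the limit of the sequence of the $e_g(\sigma_n)$'': by the inductive definition of $e(\sigma, t)$ and of ``constructed from'', if $\sigma$ is constructed from $\langle \sigma_n\rangle$ then $1$ forces $e(\sigma, \emptyset)$ to be the limit of $\langle e(\sigma_n, \emptyset) : n \in \omega\rangle$ whenever that limit exists — and in the present situation it does exist in $V[g]$, namely $\zeta$. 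One then needs $\sigma \in \Lambda$ (i.e. $1$ forces $\sigma$ converging): this follows because each $\sigma_n$ is forced converging and the new tree's ranks at nodes starting with $n$ reduce to those of $\sigma_n$, so convergence propagates up by induction on rank. Finally, since each $\sigma_n \in A^{(\omega_1)} = \bigcup_{\alpha} A^{(\alpha)}$, pick $\alpha_n$ with $\sigma_n \in A^{(\alpha_n)}$ and put $\alpha = \sup_n \alpha_n < \omega_1$; then all $\sigma_n \in A^{(\alpha)}$, so by Definition \ref{seql} (with $n=1$) $\sigma \in A^{(\alpha+1)} \subseteq A^{(\omega_1)}$, and $e_g(\sigma) = \zeta$, as needed. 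Since $g$ was arbitrary, $1$ forces the set sequentially closed.

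For the existence of convergent subsequences, take an infinite subset $Y$ of $e[A^{(\omega_1)}][g]$; its points lie in $X[g]$, which maps perfectly onto $\omega_1$ via $\dot f[g]$. If the image $\dot f[g][Y]$ is bounded in $\omega_1$, then $Y$ lies in a compact (since $\dot f[g]$ is perfect) and sequential set, which is sequentially compact, giving a convergent subsequence; its limit is in $X[g]$, hence in $e[A^{(\omega_1)}][g]$ by sequential closedness. If $\dot f[g][Y]$ is unbounded, thin $Y$ to a subsequence $\langle \zeta_n\rangle$ with $\dot f[g](\zeta_n)$ strictly increasing with supremum a limit ordinal $\lambda$; one must show this has a convergent subsequence in $X[g]$. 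Here I would use that $X[g]$ is a sequential perfect pre-image of $\omega_1$: pulling back a sequence converging to $\lambda$ downstairs and using the structure recalled before the lemma (every sequential-closure point is $e(\sigma,\emptyset)$ for a converging $\sigma$), one extracts a convergent subsequence whose limit lies over $\lambda$. Again sequential closedness deposits the limit back in the set.

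The main obstacle is the second half — producing convergent subsequences when the $\dot f$-images are unbounded — since this is exactly the place where the special structure of a perfect pre-image of $\omega_1$ (as opposed to an arbitrary sequential space) must be used, and one has to be careful that the subsequence extraction and its limit survive passing between the name $\dot X$, a condition $s$, and the generic $g$. The first half is essentially a bookkeeping argument: the definition of ``constructed from'' was engineered precisely so that $e(\cdot,\emptyset)$ commutes with the construction operation, and the hierarchy $\{A^{(\alpha)}\}$ was engineered to be closed under it, so ``the lemma should be obvious'' is fair once one unwinds the definitions — the only real content is checking that the ccc, $\omega$-bounding tree $S$ lets us pass between forced statements and evaluations at $g$ without new subsets of $\omega$ appearing, which is already noted in the surrounding text.
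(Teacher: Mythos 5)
The paper offers no argument for this lemma at all (it is introduced with ``The next lemma should be obvious''), so there is nothing to compare line by line; but your attempt founders on the one point that is genuinely not obvious. The gap is in your first half, at the claim that the $\sigma$ constructed from $\langle \sigma_n : n\in\omega\rangle$ lies in $\Lambda$ because ``convergence propagates up by induction on rank.'' That induction works at every node of the new tree \emph{except the root}: for a node $t$ with $t(0)=n$ the evaluations are inherited from $\sigma_n$, and $1$ forces those to exist since $\sigma_n\in\Lambda$; but $e(\sigma,\emptyset)$ is the limit of the sequence $\langle e(\sigma_n,\emptyset) : n\in\omega\rangle$, and $1$ forcing each $e(\sigma_n,\emptyset)$ to exist in no way forces that the \emph{sequence of these values} converges. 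In your situation the sequence converges in $V[g]$, so some $s\in g$ forces it, but under an incompatible condition the values $e(\sigma_n,\emptyset)$ may be assigned differently and fail to converge. Since Definition \ref{seql} only admits into $B^{(\alpha+1)}$ those built objects that already lie in $\Lambda^n$ (i.e.\ whose convergence is forced by $1$), your constructed $\sigma$ need not belong to $A^{(\omega_1)}$, and the sequential-closedness argument — on which your second half also leans to deposit limits back into the set — does not close.

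The repair is exactly the kind of thinning-over-$S$ argument the paper uses in the proof of Theorem \ref{thm44}: the sequence $\langle\sigma_n\rangle$ lies in $V$ (the Souslin tree is $\omega$-distributive, so adds no new $\omega$-sequences of ground-model objects), and one must pass to an infinite $b\subset\omega$ such that \emph{every} $t\in S$ — equivalently $1$ — forces $\langle e(\sigma_n,\emptyset) : n\in b\rangle$ to converge; this uses that each $t$ forces $\dot X$ sequentially compact (countably compact plus sequential) together with a length-$\omega_1$ mod-finite tower over an enumeration of $S$ and $\mathfrak p=\mathfrak c>\omega_1$. Only after such a $b$ is fixed is the built $\sigma$ a member of $\Lambda$, and hence of $A^{(\alpha+1)}$ for $\alpha$ above all the ranks of the $\sigma_n$. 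Two smaller remarks: your bounded/unbounded case split in the second half is unnecessary, since a sequence whose $\dot f$-images lie in $[0,\lambda]$ already sits inside the compact sequential set $\dot f^{-1}[0,\lambda]$ (and the unbounded case reduces to this); and the relevant property of $S$ is $\omega$-distributivity rather than ccc or $\omega$-bounding — ccc alone would not prevent new reals.
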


\begin{definition} For each $S$-name $\dot A$ and $s\forces \dot A\subset
 \Lambda^n$,  we define 
the $S$-name  $(\dot A)^{(\omega_1)}$ according to the
  property that for each $s<t$ and $t\forces \vec b\in (\dot
  A)^{(<\omega_1)}$, there is a countable $B\subset Y^n$ such that 
 $t\forces B\subset \dot A$ and $\vec b\in B^{(<\omega_1)}$. 
\end{definition}

By our assumption that $\omega_1$ has no complete accumulation points,
the family $\{ (\omega_1\setminus \delta)^{(\omega_1)} : \delta\in
\omega_1\}$ 
is a free filter of $S$-sequentially  closed subsets of $\Lambda$.
By Zorn's
Lemma, we 
can extend it to a  maximal free filter, $\mathcal F_0$,
of  $S$-sequentially closed subsets of $\Lambda$. 
To apply PFA(S) we require that we have a maximal filter
in the forcing extension by $S$.
The filter $\mathcal F_0$ may not generate a maximal filter in the
extension  $V[g]$ and so we will have to extend it. 
We will also need there to be a close connection between
 the behavior of our chosen maximal filter 
 in $V[g]$ and in $V[s\oplus g]$ for all $s\in S$. We
 refer to this as ``symmetry''.

We introduce some notational conventions.  Let $S^{<\omega}$ denote
the set of finite tuples 
$\langle s_i : i< n\rangle$  (ordered lexicographically) for which
there is a 
$\delta$ such that each $s_i\in S_\delta$. Our convention will be that
they are distinct elements. We let $bS$ denote the collection
 $\{ s\oplus g : s\in S\}$ (technically this is an $S$-name for the
set of all $\omega_1$-branches of $S$ in $V[g]$). We will be
working in the product structure $\Lambda^{bS}$ and
we let $\Pi_{\langle s_i : i<n\rangle}$ denote the
projection from $\Lambda^{bS}$ to 
the product $\Lambda^{ \{ s_i \oplus g~ :~ i< n\}}$,
and we let ${\tilde {\Pi}}_{\langle s_i ~:~ i<n\rangle}$ be the projection
onto $\Lambda^n$.  The notation $\Pi_{\langle s_i ~:~ i< n\rangle}^X$ will be
used as the notation for the projection (in the extension 
 $V[g]$) from
the product space
 $\mathbf{\Pi}\{ X[{s\oplus g}] ~:~ s\in S\}$ 
(we ignore
repetitions) onto $\mathbf{\Pi} \{ X[{s_i\oplus g}] ~:~ i<n\}$.
In case of possible confusion, we adopt a standard convention
that for a singleton $s\in S$,  we identify $\Lambda^{\{s\}}$ with
$\Lambda$ 
and $\mathbf{\Pi}\{ X[{s\oplus g}] \}$ with $X[{s\oplus g}]$; and
similarly $\Lambda^n$ with $n=1$ is treated as simply being $\Lambda$.

\begin{definition}
For $\vec \sigma \in \Lambda^{\langle s_i ~:~ i<n\rangle}$ and
generic\label{notation} 
 $g$, we intend that $e_g(\vec \sigma)$ should equal the vector
 $\langle e_{s_i\oplus g}(\vec \sigma_i) ~:~ i<n\rangle$.
Similarly, for $\dot A$ a name of a subset of $\Lambda^{\langle s_i ~:~
  i<   n\rangle}$,  
if a condition $s$ forces a value on $\dot A$, then
 we can use $e_s(\dot A)$ as an abbreviation for 
 $e_{s\oplus g}(\dot A) = \{ e_{s\oplus g}(\vec \sigma) ~:~ 
\vec \sigma \in \dot A[s\oplus g]\}$.
\end{definition}

\begin{definition} Suppose that  $\dot A$ is an $S$-name of a subset
  of $\Lambda^n$ for some $n$, in particular, that  some $s$ forces
  this. Let  
 $s'$ be any other member of $S$ with $o(s')=o(s)$. We define a new
 name $\dot A^{s}_{s'}$ (the $(s,s')$-transfer perhaps) which is
 defined by the property that for all $\langle \sigma_i \rangle_{i<n}\in
 \Lambda^n$ and  $s<t\in S$ such that $t\forces \langle \sigma_i \rangle_{i<n}
 \in \dot A$, we have that $s' \oplus t \forces \langle \sigma_i
 \rangle_{i<n} \in \dot A^s_{s'}$. 
\end{definition}

\begin{lemma} For any generic $g\subset S$,
 $ \dot A[{s \oplus g}] =  (\dot A^s_{s'})[{s'\oplus g}]$.  
\end{lemma}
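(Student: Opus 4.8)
The plan is to treat the lemma as a bookkeeping identity organized around the isomorphism between the subtree of $S$ above $s$ and the subtree above $s'$. First I would record two preliminary observations. (i) Both $s\oplus g$ and $s'\oplus g$ are cofinal chains of $S$ that pass through $s$ and $s'$ respectively and lie entirely in the cones $S_{\ge s}:=\{u\in S:u\ge s\}$ and $S_{\ge s'}$: taking the root $t_0$ of $S$, which lies in the branch $g$, we get $s\oplus t_0=s$ and $s'\oplus t_0=s'$, while every other $s\oplus t$ $(t\in g)$ either equals $s$ or properly extends it. (ii) Since $o(s)=o(s')$, the definition of $\oplus$ gives, for every $u\in S$ with $o(u)\ge o(s)$,
\[
 s\oplus(s'\oplus u)=s\cup u\restriction[o(s),o(u))=s\oplus u,\qquad s'\oplus(s\oplus u)=s'\oplus u ,
\]
so the coherence of $S$, which is exactly what guarantees $s'\oplus u\in S$, makes $\phi\colon u\mapsto s'\oplus u$ a level- and order-preserving bijection from $S_{\ge s}$ onto $S_{\ge s'}$, with inverse $v\mapsto s\oplus v$.

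The first real step is to check that $\phi$ carries the branch $s\oplus g$ onto $s'\oplus g$: for $t\in g$ with $o(t)\ge o(s)$ the displayed identity gives $\phi(s\oplus t)=s'\oplus(s\oplus t)=s'\oplus t$, and $\phi(s)=s'$, so $\phi[s\oplus g]=s'\oplus g$ (and every member of $s\oplus g$ lies in $S_{\ge s}$, so $\phi$ is defined on all of it). The second and main step is to identify $\dot A^s_{s'}$, restricted to the cone above $s'$, with the $\phi$-pushforward of $\dot A$ restricted to the cone above $s$. Reading the defining clause of $\dot A^s_{s'}$ as determining the canonical such name --- concretely $\dot A^s_{s'}=\{(\check{\vec\sigma},\,s'\oplus t):s<t\in S,\ t\forces\vec\sigma\in\dot A\}$ --- one verifies that for $u\in S_{\ge s}$ and $\vec\sigma\in\Lambda^n$,
\[
 u\forces\vec\sigma\in\dot A \iff \phi(u)\forces\vec\sigma\in\dot A^s_{s'},\qquad u\forces\vec\sigma\notin\dot A\iff \phi(u)\forces\vec\sigma\notin\dot A^s_{s'},
\]
because $\phi$ is an isomorphism of the two cones and therefore intertwines the two forcing relations. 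The one case needing its own sentence is $u=s$: here $s\forces\vec\sigma\in\dot A$ iff every $t>s$ forces it, iff the set $\{s'\oplus t:t>s\}$, which is predense below $s'$, witnesses $s'\forces\vec\sigma\in\dot A^s_{s'}$; and similarly for non-membership.

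Granting those two steps the conclusion is a short chase. A condition $w<s$ forcing $\vec\sigma\in\dot A$ is below $s\in s\oplus g$, which then forces it as well, so $\vec\sigma\in\dot A[s\oplus g]$ holds iff some $u\in s\oplus g$ with $u\ge s$ forces $\vec\sigma\in\dot A$; by the second step this happens iff some $\phi(u)\in s'\oplus g$ forces $\vec\sigma\in\dot A^s_{s'}$, i.e.\ iff $\vec\sigma\in(\dot A^s_{s'})[s'\oplus g]$. Hence $\dot A[s\oplus g]=(\dot A^s_{s'})[s'\oplus g]$.

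I expect the only genuine obstacle to be the second step. The excerpt defines $\dot A^s_{s'}$ purely by a clause prescribing which conditions force \emph{membership}, so one has to argue that it is legitimate to treat this as pinning down the whole name and, in particular, that \emph{non}-membership is transported across $\phi$ too; this is what makes the final biconditional go through in both directions. The remaining ingredients --- the arithmetic of $\oplus$ when $o(s)=o(s')$, the fact that $s\oplus g$ never dips below $s$, and the boundary bookkeeping at the node $s$ itself --- are routine once the isomorphism $\phi$ is set up.
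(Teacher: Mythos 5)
Your proof is correct. The paper states this lemma without proof, treating it as immediate from the definition of the transfer name, and your verification via the level- and order-preserving cone isomorphism $u\mapsto s'\oplus u$ (together with reading the defining clause as specifying the canonical name $\{(\check{\vec\sigma},\,s'\oplus t) : s<t,\ t\forces\vec\sigma\in\dot A\}$) is exactly the intended unwinding.
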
 

\begin{theorem} There is a  family\label{filterF} 
$\mathcal F$ = 
$\{ 
(s^\alpha,\{s^\alpha_i ~:~ i<n_\alpha\}, \dot F_\alpha ) ~:~ \alpha \in
  \lambda\}$ where, 
\begin{enumerate}
\item for each $\alpha\in \lambda$, $
\{ s^\alpha_i ~:~  i<
  n_\alpha\}\in S^{<\omega}$, $s^\alpha\in S$, $ o(s^\alpha_0)\leq o(s^\alpha)$,  
\item $\dot F_\alpha$ is an $S$-name such that $s^\alpha\Vdash \dot
  F_\alpha = (\dot F_\alpha)^{(\omega_1)} \subset \Lambda^{n_\alpha}$ 
\item for each $s\in S$ and $F\in \mathcal F_0$, $(s,\{s\}, \check F)\in 
 \mathcal F$,
\item for each $s\in S_{o(s^\alpha)}$, $(s,\{s^\alpha_i ~:~i<
  n_\alpha\}, (\dot F_\alpha)^{s^\alpha}_{s})\in \mathcal F$,
\item for each generic\label{previousitem}
 $g\subset S$, the family
$ \{ \tilde\Pi_{\langle s^\alpha_i ~:~ i<n_\alpha\rangle}^{-1}( 
(\dot F_\alpha)[g]) ~:~ 
  s^\alpha \in g\}$ is finitely directed;
we let $\dot {\mathcal  F}_1$ be the $S$-name for the filter base it
generates.
\item
For each generic $g\subset S$ and each $\langle  s_i ~:~ i< n\rangle\in 
 S^{<\omega}$, the family \\
  $\{  (\dot F_\alpha)[g] ~:~ s^\alpha\in g \ \mbox{and}\ 
 \{ s_i \oplus g ~:~ i< n\} = \{ s^\alpha_i \oplus g ~:~ i<n_\alpha\} \}$ is
 a maximal filter on the family of $S$-sequentially closed subsets of
 $\Lambda^n$.
\end{enumerate}
\end{theorem}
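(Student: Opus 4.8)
The plan is to build the family $\mathcal F$ by transfinite recursion along an enumeration of the "tasks" to be handled: for each finite tuple $\langle s_i : i<n\rangle\in S^{<\omega}$ and each $S$-sequentially closed candidate subset that must be decided one way or the other to achieve maximality in clause (6), we add a triple. Start with the seed collection $\{(s,\{s\},\check F) : s\in S,\ F\in\mathcal F_0\}$ demanded by clause (3), together with, for each $F\in\mathcal F_0$ and each relevant tuple, the transfers forced by clause (4). Since $\mathcal F_0$ is itself a maximal free filter of $S$-sequentially closed subsets of $\Lambda$, and since PFA(S) gives $\mathfrak c=\omega_2$, the whole construction has length $\lambda\le\omega_2$; at each step we either add a set already in the filter generated so far, or — if the step concerns a pair $(B,\text{tuple})$ not yet decided — we add whichever of $B$ or its "complement" $\Lambda^n\setminus B$ (in the sense appropriate to $S$-sequentially closed sets: one takes the $S$-sequential closure of the complement) is compatible with the filter base accumulated at this stage. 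The key point making this possible is the preceding Lemma: $1$ forces that $e[A^{(\omega_1)}]$ is sequentially compact, so two $S$-sequentially closed sets are "compatible" exactly when their intersection does not collapse to the trivial filter witnessing $\omega_1$ having a complete accumulation point, and maximality can always be reached.

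The recursion must be carried out \emph{symmetrically}: whenever we commit to $\dot F_\alpha$ with anchor $s^\alpha$, we simultaneously commit, for every $s\in S_{o(s^\alpha)}$, to the transfer $(\dot F_\alpha)^{s^\alpha}_s$ with anchor $s$ (clause (4)), using the Lemma that $\dot A[s\oplus g]=(\dot A^s_{s'})[s'\oplus g]$ to see these are genuinely the "same" decision read off a different branch. This forces us to process decisions in \emph{blocks} indexed by levels $\delta$ of $S$: at stage $\delta$, for each tuple living on level $\delta$ we decide all the currently-undecided $S$-sequentially closed subsets of $\Lambda^n$ along that tuple, and propagate via transfer across the whole level. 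Because $S$ is Souslin, each level is countable, and because we add no new reals the relevant family of $S$-sequentially closed sets is computed in $V$; so the bookkeeping closes. Clause (1) ($o(s^\alpha_0)\le o(s^\alpha)$) is arranged by always choosing the anchor $s^\alpha$ at a level at least that of the tuple; clause (2) ($s^\alpha\Vdash \dot F_\alpha=(\dot F_\alpha)^{(\omega_1)}\subset\Lambda^{n_\alpha}$) is preserved because we only ever insert names of $S$-sequentially closed sets (applying $(\cdot)^{(\omega_1)}$ when we take a complement).

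For clause (5), finite directedness of $\{\tilde\Pi_{\langle s^\alpha_i\rangle}^{-1}((\dot F_\alpha)[g]) : s^\alpha\in g\}$: given two triples with $s^\alpha,s^\beta\in g$, work above a common extension $t\in g$; the tuples $\{s^\alpha_i\oplus g\}$ and $\{s^\beta_i\oplus g\}$ merge into one tuple on a high enough level, and by construction (we decided all sets along that merged tuple at the corresponding block) there is a $\gamma$ with $s^\gamma\in g$ whose $\dot F_\gamma$ refines both pullbacks; one checks $\dot F_\gamma$ is non-trivial because the filter base was kept free at every stage. Clause (6) — maximality on $\Lambda^n$ for the specific external tuple $\{s_i\oplus g\}$ — is immediate from the recursion having explicitly enumerated and decided every $S$-sequentially closed subset of $\Lambda^n$ along every tuple, combined with directedness from (5) to see the decided sets actually form a filter.

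The main obstacle I expect is clause (5) together with the symmetry requirement: one must verify that the block-by-block decision procedure never creates an inconsistency \emph{across branches} — i.e. that transferring a committed set to every $s\in S_{o(s^\alpha)}$ does not, when intersected with an earlier commitment living on a different part of the level, collapse the filter in some generic extension $V[g]$ where both survive. Handling this requires repeatedly invoking the transfer Lemma to reduce a statement about $V[s\oplus g]$ to one about $V[g]$, and using that $\mathcal F_0$ was chosen maximal and free in $V$ so that all its transfers are automatically mutually compatible, then checking inductively that each new block's decisions inherit this. This is the step where "familiarity with \cite{DT}" is genuinely needed, and where the bulk of the verification lies; the rest is bookkeeping against the enumeration.
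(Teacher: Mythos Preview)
Your approach is essentially the paper's: the authors' entire proof reads ``Straightforward recursion or Zorn's Lemma argument over the family of `symmetric' filters (those satisfying (1)--(5)),'' and your transfinite recursion building a maximal symmetric family is exactly a spelling-out of that sentence. One small correction: the ``add $B$ or the sequential closure of its complement'' step is an ultrafilter reflex that is both unnecessary and not quite well-posed here --- the family of $S$-sequentially closed sets is not a Boolean algebra, and maximality of a \emph{filter} is obtained simply by adding $B$ whenever it (together with all its transfers, to preserve (4)) is compatible with what has been built, and skipping it otherwise; Zorn's Lemma on the poset of families satisfying (1)--(5) does this in one stroke and makes your worry about cross-branch inconsistency disappear, since any strictly larger symmetric family would contradict maximality.
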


\begin{proof} 
Straightforward recursion or Zorn's Lemma argument over the family 
 of ``symmetric'' filters (those satisfying (1)-(5)).
\end{proof}

\begin{definition}
 For any $\langle s_i ~:~ i<\ell\rangle\in S^{<\omega}$, let 
  $\dot {\mathcal F}_{\langle s_i~:~i<\ell\rangle}$, 
 respectively   $\dot {\mathcal F}^{\sim}_{\langle s_i~:~i<\ell\rangle}$, 
 denote the filter on $\Lambda^{\langle s_i \oplus g~:~ i<\ell\rangle}$,
 respectively 
    $\Lambda^{\ell}$, induced by 
$ \Pi_{\langle s_i ~:~      i<\ell\rangle}(\dot{\mathcal F}_1)$,
 respectively
$ \tilde\Pi_{\langle s_i ~:~      i<\ell\rangle}(\dot{\mathcal F}_1)$.
Except for re-naming of the index set, these are the same.
\end{definition}

\begin{definition} Let $\mathcal A$ denote the family\label{needA}
 of all
$(s,\langle s_i ~:~ i< \ell\rangle, \dot A)$ satisfying that  $o(s)\geq o(s_0)$, 
$\langle s_i ~:~
i< \ell\rangle \in S^{<\omega}$, and 
 $s\Vdash 
 \dot A \in \dot
 {\mathcal F}_{\langle s_i~:~i<\ell\rangle}^+$. As usual, 
for a family $\mathcal G$ of sets,
 $\mathcal  G^+$ denotes the family of sets that meet each member of
 $\mathcal G$.
\end{definition}

\begin{lemma} For each $(s,\langle s_i~:~ i<n\rangle, \dot A)\in \mathcal A$, 
the object  $(s,\langle s_i~:~i<n\rangle , \dot
A^{(\omega_1)})$ is in the list $\mathcal F$.
\end{lemma}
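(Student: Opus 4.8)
The plan is to argue by the maximality of $\mc F$: by Theorem~\ref{filterF} (and the Zorn/recursion argument proving it), $\mc F$ is a maximal family of ``symmetric'' triples satisfying clauses~(1)--(5), with clause~(6) a consequence of that maximality. Writing $\vec s=\langle s_i:i<n\rangle$, I would first record two routine facts. Since $\dot A^{(\omega_1)}$ is $S$-sequentially closed, $s$ forces $(\dot A^{(\omega_1)})^{(\omega_1)}=\dot A^{(\omega_1)}\subset\Lambda^n$, so the new triple would respect clause~(2). And since $s\forces\dot A\subset\dot A^{(\omega_1)}$ and any superset of a member of $\dot{\mc F}^+_{\vec s}$ lies in $\dot{\mc F}^+_{\vec s}$, the hypothesis $(s,\vec s,\dot A)\in\mc A$ gives $s\forces\dot A^{(\omega_1)}\in\dot{\mc F}^+_{\vec s}$.

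Supposing toward a contradiction that $(s,\vec s,\dot A^{(\omega_1)})$ is not already in $\mc F$, I would pass to
\[
\mc F'=\mc F\cup\bigl\{(s',\vec s,(\dot A^{(\omega_1)})^{s}_{s'}):s'\in S,\ o(s')=o(s)\bigr\},
\]
adjoining the triple together with all its $(s,s')$-transfers so as to keep the family symmetric; since $\mc F$ is closed under transfers (clause~(4)), transfers compose, and $(\dot F)^{s}_{s}=\dot F$, none of the adjoined triples can already lie in $\mc F$, so $\mc F'\supsetneq\mc F$. The work is to verify that $\mc F'$ still satisfies (1)--(5), which contradicts maximality and proves the lemma. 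Clauses~(1) and~(3) are immediate ($o(s_0)\le o(s)=o(s')$, and no new $\mc F_0$-triples are required), and clause~(4) persists because transfers compose. Clause~(2) for the adjoined triples follows from the fact that both the $(\cdot)^{(\omega_1)}$-operation on names and the $(s,s')$-transfer are defined conditionwise via $\oplus$, whence $((\dot B)^{s}_{s'})^{(\omega_1)}=(\dot B^{(\omega_1)})^{s}_{s'}$; applying this to $\dot B=\dot A^{(\omega_1)}$ and using idempotency of $(\cdot)^{(\omega_1)}$ gives $s'\forces (\dot A^{(\omega_1)})^{s}_{s'}=((\dot A^{(\omega_1)})^{s}_{s'})^{(\omega_1)}$.

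The hard part will be clause~(5). Fix a generic $g\subset S$ and let $s^\star$ be the node of $g$ at level $o(s)$. Only the adjoined triple with $s'=s^\star$ has its controlling condition in $g$, so the single new generator it contributes to the family in~(5) is $\tilde\Pi^{-1}_{\vec s}\bigl((\dot A^{(\omega_1)})^{s}_{s^\star}[g]\bigr)$; since $s^\star\in g$ the branches $s^\star\oplus g$ and $g$ coincide, so by the transfer lemma
\[
\tilde\Pi^{-1}_{\vec s}\bigl((\dot A^{(\omega_1)})^{s}_{s^\star}[g]\bigr)=\tilde\Pi^{-1}_{\vec s}\bigl(\dot A^{(\omega_1)}[s\oplus g]\bigr).
\]
Now $s\oplus g$ is a generic filter on $S$ containing $s$, so the hypothesis $(s,\vec s,\dot A)\in\mc A$ gives $\dot A[s\oplus g]\in\dot{\mc F}^+_{\vec s}[s\oplus g]$, hence also $\dot A^{(\omega_1)}[s\oplus g]\in\dot{\mc F}^+_{\vec s}[s\oplus g]$. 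I would then use the symmetry of $\mc F$ recorded in clauses~(3)--(4) to transport this to $V[g]$: the transfers in clause~(4) are exactly what identifies $\dot{\mc F}_1$ evaluated at $s\oplus g$ with its evaluation at $g$ under the coordinate relabeling of $\Lambda^{bS}$ induced by $t\oplus g\mapsto t\oplus(s\oplus g)$, so that $\dot A^{(\omega_1)}[s\oplus g]$, regarded in $\Lambda^n$, meets every member of the $\vec s$-projection $\dot{\mc F}^{\sim}_{\vec s}[g]$ of $\dot{\mc F}_1[g]$. Since $\dot A^{(\omega_1)}[s\oplus g]$ is $S$-sequentially closed and, by clause~(6), $\dot{\mc F}^{\sim}_{\vec s}[g]$ is a maximal filter on the $S$-sequentially closed subsets of $\Lambda^n$, it must in fact belong to $\dot{\mc F}^{\sim}_{\vec s}[g]$; hence its pull-back contains a member of $\dot{\mc F}_1[g]$ and adjoining it leaves the family in~(5) finitely directed. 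The delicate point throughout is this last step: carefully matching $\dot{\mc F}_1$ at $g$ with $\dot{\mc F}_1$ at $s\oplus g$ through precisely the transfers supplied by clause~(4), and then using clause~(6) --- i.e.\ the maximality already built into $\mc F$ --- to upgrade ``meets every member'' to ``is a member.''
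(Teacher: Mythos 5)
The paper states this lemma with no proof at all, treating it as an immediate consequence of the maximality built into Theorem \ref{filterF}; your argument --- adjoin the triple together with all of its $(s,s')$-transfers, check that the ``symmetric filter'' clauses (1)--(5) survive, and use clause (6) to upgrade ``$\dot A^{(\omega_1)}[s\oplus g]$ meets every member of the filter'' to ``it belongs to the filter,'' so that finite directedness in (5) is preserved and maximality is contradicted --- is exactly that intended argument, carried out correctly. Nothing to correct.
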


\subsection{$S$-preserving proper forcing}

This next statement was a lemma in the  proof of PPI 
 in \cite{DT},  we change it to a definition.

\begin{definition} Suppose that\label{acceptable}
 $M\prec H(\kappa)$ (for suitably big $\kappa$) is a
  countable elementary submodel
  containing $\Lambda,  \mathcal
  A$. Let $M\cap \omega_1 = \delta$.
Say that the sequence
$\langle  y^M(s) ~:~ s\in S_\delta\rangle$ 
is an $M$-acceptable sequence providing
that for every $(\bar s,\{s_i ~:~ i<n\}, \dot A)\in \mathcal A\cap M$,
and every $s \in S_\delta$ with $\bar s < s$, there is a
 $B\subset \Lambda^n\cap M$ such that $s\forces B\subset \dot A$
and $s \forces \langle  y^M(s_i \oplus s) ~:~ i< n\rangle \in
B^{(\delta+1)}$. 
\end{definition}

We must give a new proof that there is an acceptable sequence
 consisting of points with a special countable sequence of
neighborhoods (see Lemma \ref{firstcountable}).  The complicated
condition (\ref{three}) is capturing the net effect of all the convergence
and symmetry
requirements that will emerge in the definition of our poset. 

\begin{definition}
 An $M$-acceptable sequence\label{Melement} 
 $\langle y^M(s) ~:~ s\in S_\delta\rangle$ ($M\cap \omega_1 = \delta$) 
  is $(M,\omega)$-acceptable
 providing there is a countable set $T\subset \omega_2$
and a $\gamma > \delta = M\cap \omega_1$ 
 such that, for each $s\in  S_\gamma$, 
\begin{enumerate}
\item 
 $s$ forces an ordinal value $x$ on  $e_g(y^M(g\cap S_\delta))$,
\item  for each $\xi \in T$,
 $s$ forces a value, denoted $U_s(x,\xi)$, on $\dot U(x,\xi)\cap M$
  which is a subset of $X[{s\oplus g}]$ (which is a set of ordinals),
\item $s$ forces that $x$  is the only point 
 that is in the closure  of each of the sets
  $$ {\Pi}^X_{\langle s\oplus g\rangle}
\left((\Pi^X_{\langle s_i ~:~ i<n\rangle})^{-1}
\left(\strut\mathbf{\Pi} 
 \{ U_{s_i\oplus s'}(\xi_i) ~:~i<n\}
\cap    e_{s}[F\cap M] \right)\right)$$  
where\label{three}
  $\langle\xi_i~:~ i<n\rangle\in T^n$,  $s'\in S_\gamma$, 
$s'\restriction\delta=s\restriction\delta$,
and $s$ forces that
$F$ is a member of $M\cap \mathcal F_{\langle s_i~:~i<n\rangle}$ for
some $\langle s_i ~:~ i\in n\rangle \in  S^{<\omega}\cap M$. 
Note that $s$ does force a value on $e_g[F\cap M]$,
and so this value is $e_s[F\cap M]$.
\end{enumerate}
\end{definition}

It may help to unravel condition (\ref{three}) a little. The set
 $e_s[F\cap M]$ is the ordinal evaluation of a set that $s$ has forced
 to be in  our filter. 
Then we use an open set intersected with $M$,
 namely $
\strut\mathbf{\Pi} 
 \{ U_{s_i\oplus s'}(\xi_i) ~:~i<n\} $, that comes from sets
that $s'$ forces are in the topology. The connection here is
that $e_s[F\cap M]$ will be equal to $e_{s'}[F'\cap M]$ for some other $F'$
forced by $s'$ to be in the filter. Then the map
$(\Pi^X_{\langle s_i ~:~ i<n\rangle})^{-1}$ pulls this intersection back
into the full product  $\mathbf{\Pi}\{ X[{s\oplus g}] ~:~ s\in S\}$ 
and, finally, 
$ {\Pi}^X_{\langle s\oplus g\rangle}$ is just the projection
to the single coordinate $s\oplus g$. Now we are
asking that $s$ will force
that $x$ will (still)  be the limit even though 
$s'$ may well have assigned a different and disjoint set of 
ordinals to  be limits of any of these sets. This aspect is necessary
for the poset  to be proper and is already handled (can be shown to be)
by
$M$-acceptability. The additional requirement that $s$ forces that
 $x$ is the only such limit is to ensure we get a copy of $\omega_1$. 

\begin{lemma} 
For each countable (suitable)
$M\prec H(\theta)$ (i.e. with $\dot{\mathcal F}_1, \dot X, S$ all
in $M$) there is an\label{firstcountable}
$(M,\omega)$-acceptable sequence.
\end{lemma}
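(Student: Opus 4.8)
The plan is to re-run the construction of an $M$-acceptable sequence from \cite{DT}, threading through it the bookkeeping needed to produce the countable set $T\subset\omega_2$ and the ordinal $\gamma>\delta$. Fix $M$ as in the statement and put $\delta=M\cap\omega_1$. Since $S$ is Souslin, the level $S_\delta$ is countable; enumerate in order type $\omega$ all the data that $M$-acceptability must respect, namely the triples $(\bar s,\langle s_i\rangle_{i<n},\dot A)\in\mathcal A\cap M$ together with the configurations $\langle s_i\rangle_{i<n}\in S^{<\omega}\cap M$ indexing the product filters $\dot{\mathcal F}_{\langle s_i\rangle}$. One builds $\langle y^M(s):s\in S_\delta\rangle$ by a recursion of length $\omega$, specifying each $y^M(s)$ block by block as a member of $\Lambda$ built from a carefully chosen sequence $\langle\sigma^s_n:n\in\omega\rangle$ of elements of $\Lambda\cap M$ whose $e$-images are forced to converge, so that after step $k$ the $k$-th enumerated object is served in the sense of Definition~\ref{acceptable}. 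This part is as in \cite{DT}: at the step treating $(\bar s,\langle s_i\rangle_{i<n},\dot A)$ one uses that $\dot A\in\dot{\mathcal F}_{\langle s_i\rangle}^+$ meets every member of the maximal filter of $S$-sequentially closed sets, together with the closure properties of $\Lambda$ under the ``built from'' operation, to drop the required tuple into $B^{(\delta+1)}$ for a countable $B\subset\Lambda^n\cap M$.

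The genuinely new ingredient is to keep the local structure of the prospective limit points under control. For each $s\in S_\delta$ the value $y^M(s)$ is built from a sequence of members of $\Lambda\cap M$, so $1$ forces $e_g(y^M(s))=\lim_n e_g(\sigma^s_n)$; thus every prospective limit point is a sequential limit of a sequence each term of which is the $e$-image of a single $\sigma^s_n\in M$. What clause~(\ref{three}) of Definition~\ref{Melement} actually requires of the evaluation $x$ is \emph{not} that $x$ have countable character in $\dot X$ -- it need not, since $x$ may lie in a large compact fibre of $\dot f$ -- but only that, among subsets of the countable set $M\cap\omega_2$, the traces $\dot U(x,\xi)\cap M$, together with the $\dot f$-preimages of initial segments of $\omega_1$ (which enter because the copy of $\omega_1$ is to be built projecting cofinally by $\dot f$), are generated by countably many indices $\xi$. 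Because $\dot f$ is perfect, $\mathrm{cf}(\delta)=\omega$, $\dot X$ is countably tight, and $M$ is countable, such an ``$M$-relative local base'' at $x$ exists; the content of the recursion is to choose one uniformly over all $s\in S_\delta$ and over all the coordinates, filter members $F\in M$, and secondary conditions $s'$ that occur in clause~(\ref{three}), and to collect the countably many indices so used into one countable $T\subset\omega_2$. One then fixes $\gamma<\omega_1$ with $\gamma>\delta$ large enough that every $s\in S_\gamma$ decides $e_g(y^M(g\cap S_\delta))$, decides $\dot U(x,\xi)\cap M$ for every $\xi\in T$, and decides $e_g[F\cap M]$ for every relevant $F\in M$ -- possible since only countably many statements are involved -- whereupon clauses~(1) and~(2) of Definition~\ref{Melement} hold by construction.

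Clause~(\ref{three}), uniqueness of the limit, is secured during the same recursion using the remaining freedom in the blocks $\sigma^s_n$: having already forced that $x$ \emph{is} in the closure of each displayed set, one appeals to the maximality of the filters $\mathcal F_{\langle s_i\rangle}$ -- any competitor $z\neq x$ fails to be a limit of some filter member, and, precisely because $T$ is an $M$-relative local base at $x$, only countably many such competitors need to be killed -- so that intersecting the corresponding witnesses into the construction and re-diagonalizing leaves $x$ the unique surviving limit, exactly as uniqueness is forced in the first-countable $\PPI$ argument of \cite{DT}. The main obstacle is the middle step: reconciling ``$\dot X$ is only sequential, not first countable'' with the demand for a \emph{fixed} countable $T$. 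The construction sidesteps the hazard -- a prospective $x$ lying in a fat compact fibre with uncountable character -- by never choosing $x$ freely in a fibre but always producing it as a sequential limit of points controlled by the countable model $M$, and by noticing that clause~(\ref{three}) sees $\dot X$ only through $M$, so that an $M$-relative local base suffices. Checking that this relative base can be taken uniform over the countably many $s\in S_\gamma$ (and the secondary $s'\in S_\gamma$ with $s'\restriction\delta=s\restriction\delta$), without disturbing $M$-acceptability or the uniqueness clause, is the technical heart of the lemma.
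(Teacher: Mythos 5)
There is a genuine gap at what you yourself call ``the technical heart of the lemma'': the existence of the countable relative local base. You assert that an ``$M$-relative local base'' at $x$ exists ``because $\dot f$ is perfect, $\mathrm{cf}(\delta)=\omega$, $\dot X$ is countably tight, and $M$ is countable,'' but none of these facts delivers it. The traces $\dot U(x,\xi)\cap M$ form a filter base on the countable set $M\cap\omega_2$, and a filter base on a countable set indexed by $\omega_2$ need not be countably generated (a nonprincipal ultrafilter on $\omega$ is the standard obstruction); likewise countable tightness of a compact space does not produce points of countable relative character. The paper's proof supplies exactly the missing ingredient: it works inside the \emph{countable product} $\mathbf{\Pi}\{X[s\oplus g](\delta):s\in S_\delta\}$, notes that this product is compact and sequential, and invokes the theorem (cited as \cite[8.5]{To}) that in a compact sequential space every closed subset has a relative $G_\delta$-point. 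The point $\vec x_\alpha$ is chosen to be a relative $G_\delta$-point of the compact set $K_\alpha$ obtained by intersecting the closures of the filter sets, and the countable set $T_\alpha$ is a witness to that relative countable character. This is also where uniqueness in clause~(3) comes from: the countably many neighborhood traces indexed by $T_\alpha$ isolate $\vec x_\alpha$ inside $K_\alpha$, so no ``competitors'' need to be killed at all. Your alternative uniqueness argument --- that maximality of the filters lets you eliminate the ``only countably many'' competitors --- is unsupported, since the compact set of candidate limits has no a priori bound on its size.

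A secondary structural gap: your recursion has length $\omega$ and runs over $S_\delta$ and the data in $M$, but the symmetry requirements (the quantification over $s'\in S_\gamma$ with $s'\restriction\delta=s\restriction\delta$ in clause~(\ref{three})) force the paper to run a \emph{transfinite} recursion constructing a maximal antichain $\{s_\alpha:\alpha<\beta\}$ in $S\setminus M$, with an increasing chain of countable families $\mathcal H_\alpha$ so that each later $G_\delta$-point remains in the closures of all earlier neighborhood traces, and only afterwards to read off the $y^M(s_{\delta,i})$ as elements of $\Lambda$ built from sequences $\vec y_m$ chosen in $M$ using sequentiality of the bounded pieces $X(\eta_m)$. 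Your sketch correctly identifies that $x$ must be produced as a sequential limit controlled by $M$ rather than chosen freely in a fibre, but without the relative $G_\delta$-point theorem and the antichain bookkeeping the construction does not close.
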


We postpone the proof.  But henceforth we adopt the notation
that for a countable suitable
 $M\prec H(\theta)$, the sequence
 $\langle y^M(s) ~:~ s\in S_{M\cap \omega_1}\rangle$ denotes
the $\prec$-least $(M,\omega)$-suitable sequence. Also
let $\{ T_M (n) ~:~ n\in \omega\}$
 denote a countable subset of $\omega_2$ witnessing
that the sequence is $(M,\omega)$-acceptable, and not just
 $M$-acceptable. We can arrange that $\{ T_M (n) ~:~ n\in \omega\}$
 is forced to form a regular
 filter base.
\bigskip

Now we are ready to define our poset $\mathcal P$.  
Another change we make from \cite{DT}
is that we no longer have  the assumption of countable
bases of neighborhoods, and so we use finite subsets
of $\omega_2$ rather than of $\omega$ as  side conditions.
We will be able to weaken the character assumption 
(through  Lemma \ref{acceptable}) 
by an appeal to  \cite[8.5]{To} which showed
 that there will be a rich supply of
 relative $G_\delta$-points.

\begin{definition}
 A condition $p\in \mathcal P$ consists of $({\mathcal M}_p, S_p, H_p)$
 where $\mathcal M_p$ is a finite $\in$-chain of countable
suitable elementary 
submodels of  $H(\theta)$ and $H_p$ is a finite subset of $\omega_2$.
 We let $M_p$ denote the maximal
 element of $ {\mathcal M}_p$ and let
 $\delta_p = M_p\cap \omega_1$. We require that 
 $S_p$ is a finite subset of $S_{\delta_p}$. 
 For $s\in S_p$
  and $M\in \mathcal M_p$, we use both 
$s\restriction M$ and $s\cap M$ to
  denote $s\restriction (M\cap \omega_1)$.
Note that  the sequence $\{ y^M(s) ~:~ s\in S_{M\cap \omega_1}\}$
is in each $M'$
 whenever $M\in M'$ are both in $\mathcal M_p$.

 It is helpful to simultaneously think of $p$ as inducing a finite
 subtree,  $S_p^\downarrow$,
 of $S$ equal to $\{ s\restriction M ~:~ s\in S_p, \ \mbox{and}\ 
  M\in \mathcal M_p\}$.

  For each $s\in S_p$ and each $M\in {\mathcal M}_p\setminus M_p$
   we define an $S$-name
   $\dot W_p(s\restriction M)$ of a neighborhood of
   $e_s(y^M(s\restriction M))$. 
   It is defined as the name of the
 intersection of all sets of the form 
$\dot U(e_{s'}(y^{M'}(s'\restriction M')),\xi)$ 
   where $s'\in S_p$, $\xi\in H_p$, 
$M'\in \mathcal M_p\cap M_p$, and
$s\restriction M\subset s'\restriction M'$
    and $e_{s'}(
    y^M(s\restriction M)) \in \dot U(e_{s'}(y^{M'}(s'\restriction
    M')),\xi)$. We adopt 
   the convention that $\dot W_p(s\cap M)$ is all of $X$ if
 $s\cap M\notin S_p^\downarrow$.
  \medskip
  
  The definition of $p< q$ is that $\mathcal M_q\subset \mathcal M_p$,
  $H_q\subset H_p$,  
   $S_q \subset S_p^\downarrow$, 
and   for   each $s'\in S_p$    and 
  $s\in S_q$ below $s'$,
   we have that $s'$ forces that
   $e( y^M(s\restriction M))\in \dot W_q(s\restriction M')$
   whenever $M\in \mathcal M_p\setminus \mathcal M_q$
and   $M'$ is the minimal member of $M_q\cap (\mathcal M_q\setminus
M)$.
Another trivial change that we list separately for emphasis is that
for each $M\in \mathcal M_q$ and each $k<|H_q|$, we require
that $T_M(k) \in H_p$.
\end{definition}

It is proven in \cite{DT} that a version of $\mathcal P$ is proper
and  $S$-preserving.  The superficial 
distinction between the two posets can be handled in either of two
ways. We can re-enumerate each $\{ \dot U(x,n) ~:~ n\in \omega\}$ 
so as to be an enumeration of the sequence $\{ \dot U(x,T_M(n)) ~:~ 
n\in \omega\}$, or we can examine the proof in \cite{DT} and
 notice that the proof did not in any way use that we were restricting
 to countably many neighborhoods of each point. 
\bigskip

Before we prove Lemma \ref{firstcountable}, 
 let us prove that this works.

\begin{lemma}
If $\mathcal P$ is proper and $S$-preserving, then PFA(S) implies that
 $S$ forces\label{3.9}
  that $\dot X$
 contains  a copy of $\omega_1$.
\end{lemma}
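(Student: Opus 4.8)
The plan is to apply PFA(S) to the poset $\mathcal P$ against an $\aleph_1$-sized family of dense sets and then, inside the $S$-extension $V[g]$, read off a copy of $\omega_1$ from the generic object. First I would set up the dense sets: for $\alpha<\omega_1$ put $E_\alpha=\{p\in\mathcal P:\delta_p>\alpha\}$. Each $E_\alpha$ is dense, because given $p$ one chooses a countable suitable $M\prec H(\theta)$ with $p,M_p\in M$ and $M\cap\omega_1>\alpha$ and extends $p$ by adjoining $M$ as a new top model of $\mathcal M_p$, with $S_q$ any finite set of level-$(M\cap\omega_1)$ nodes extending those of $S_p$, at least one over each: the neighbourhood clause of $<_{\mathcal P}$ only constrains models lying below newly added ones, so a new top model is unobstructed, and the requirement ``$T_M(k)\in H_p$'' holds vacuously here. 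Since $\mathcal P$ is proper and $S$-preserving, PFA(S) then yields a filter $G$ meeting every $E_\alpha$. I would deliberately \emph{not} try to force $\bigcup_{p\in G}H_p=\omega_2$ (that would require $\aleph_2$ dense sets); the ``$T_M(k)\in H_p$'' clause of $<_{\mathcal P}$ already guarantees that $\bigcup_{p\in G}H_p$ absorbs every $T_{M_\alpha}(k)$, which is all that is used.

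Next I would extract the generic object $\mathcal M_G=\bigcup_{p\in G}\mathcal M_p$, an $\in$-chain of countable suitable models of order type $\omega_1$ by density of the $E_\alpha$; restricting to the club of $\alpha$ at which $\mathcal M_G$ is continuous, I enumerate it as $\{M_\alpha:\alpha<\omega_1\}$ with $\delta_\alpha=M_\alpha\cap\omega_1$ strictly increasing and continuous. Each $M_\alpha$ carries its $\prec$-least $(M_\alpha,\omega)$-acceptable sequence $\langle y^{M_\alpha}(s):s\in S_{\delta_\alpha}\rangle$, available by Lemma \ref{firstcountable}, together with its witnessing countable set $\{T_{M_\alpha}(n):n\in\omega\}$. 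Fixing an $S$-generic $g$ and working in $V[g]$, I set $s_\alpha=g\cap S_{\delta_\alpha}$ and $x_\alpha=e_g(y^{M_\alpha}(s_\alpha))$, a well-defined point of $X[g]$ by clause (1) of Definition \ref{Melement}. After one more thinning to a club, the claim to verify is that $A=\{x_\alpha:\alpha<\omega_1\}$ is a copy of $\omega_1$ in $\dot X[g]$; since $g$ is an arbitrary generic, this yields that $S$ forces $\dot X$ to contain a copy of $\omega_1$.

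I would then check three things. (a) Since $\mathcal F_0$ extends the filter $\{(\omega_1\setminus\delta)^{(\omega_1)}:\delta<\omega_1\}$, and $M_\alpha$-acceptability places $y^{M_\alpha}(s_\alpha)$ into $B^{(\delta_\alpha+1)}$ for a countable $B\subseteq\dot A\cap M_\alpha$ with $\dot A\in\dot{\mathcal F}_1$, continuity of the perfect map $\dot f$ gives $f(x_\alpha)\ge\delta_\alpha$; thinning to a club on which $\alpha\mapsto f(x_\alpha)$ is strictly increasing makes the $x_\alpha$ distinct, gives $A$ order type $\omega_1$ when well-ordered by $f$-value, makes each $\{x_\beta:\beta\le\alpha\}$ closed in $A$ (it lies in $f^{-1}[0,f(x_\alpha)]$), and makes each successor-or-zero point isolated in $A$ (split off by $f^{-1}(f(x_\gamma),\omega_1)$, $\gamma$ the predecessor). (b) Convergence at limits: for limit $\alpha$ and a basic neighbourhood of $x_\alpha$ cut out by finitely many $\xi\in T_{M_\alpha}$, pick $q\in G$ with $M_\alpha\in\mathcal M_q$ and those $\xi$ in $H_q$; the neighbourhood names $\dot W_p$ and the clause of $<_{\mathcal P}$ involving them force, for every $M_\beta$ added below $M_\alpha$ in some $p\le q$ in $G$, the point $x_\beta$ into $\dot W_q(\cdot\restriction M_\alpha)[g]$, which refines the given neighbourhood; hence a tail $\{x_\beta:\beta_0\le\beta<\alpha\}$ lies in it, i.e. $x_\alpha=\lim_{\beta\to\alpha}x_\beta$. (c) Left-separation at limits: $x_\alpha\notin\overline{\{x_\beta:\beta>\alpha\}}$, which is exactly the content of the uniqueness half of condition (\ref{three}) --- it forces $x_\alpha$ to be the only point of $X[g]$ lying in every set of the form displayed there (the traces of the chosen basic neighbourhoods against the filter sets $e_s[F\cap M_\alpha]$), so no later $x_\gamma$ can witness $x_\alpha$ as an accumulation point of $\{x_\beta:\beta>\alpha\}$. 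From (a)--(c) the subspace topology on $A$ coincides with the order topology, so $A\cong\omega_1$.

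The hard part will be (c): showing that condition (\ref{three}) genuinely prevents the higher tail $\{x_\beta:\beta>\alpha\}$ from accumulating at $x_\alpha$, so that $A$ is an honest copy of $\omega_1$ rather than merely a perfect pre-image of $\omega_1$ as produced in \cite{DT}. This is the one place where the strengthened notion of Definition \ref{Melement} --- and hence the still-to-be-proved Lemma \ref{firstcountable}, whose whole task is to supply acceptable sequences that in addition satisfy (\ref{three}) --- is indispensable. By contrast, step (b) is a matter of unwinding $<_{\mathcal P}$ and the names $\dot W_p$, and the remaining bookkeeping in (a), together with the continuity of $\mathcal M_G$ and the density of the $E_\alpha$, is routine.
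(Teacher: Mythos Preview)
Your step (b) has a genuine gap, arising from a misidentification of the role of condition~(\ref{three}) in Definition~\ref{Melement}. You argue convergence $x_\beta\to x_\alpha$ by taking a ``basic neighbourhood of $x_\alpha$ cut out by finitely many $\xi\in T_{M_\alpha}$'' and using the $\dot W_q$-clause of $<_{\mathcal P}$ to push a tail of $\{x_\beta:\beta<\alpha\}$ inside it. But $\dot X$ is only assumed sequential, not first countable: $x_\alpha$ may have uncountable character, and the countable family $\{\dot U(x_\alpha,\xi):\xi\in T_{M_\alpha}\}$ is \emph{not} a neighbourhood base at $x_\alpha$ (the set $T_{M_\alpha}$ is chosen in Lemma~\ref{firstcountable} to witness relative $G_\delta$-ness of a point in a product, not a base in $X[g]$). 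Getting a tail into each such $\dot U$ therefore does not yield convergence. Condition~(\ref{three}) is precisely the substitute for a countable base: it says $x_\alpha$ is the \emph{only} point lying in the closure of every displayed set. In the paper's argument the $\dot W_r$-clause together with $M$-acceptability shows that any sequence $\{x_{\delta_\ell}\}$ with $\delta_\ell\uparrow\delta$ in $C$ is eventually inside each displayed set; hence every cluster point of the sequence lies in all those closures and must equal $x_\delta$ by the uniqueness in~(\ref{three}); compactness of $f^{-1}[0,\delta]$ then gives $x_{\delta_\ell}\to x_\delta$. So condition~(\ref{three}) and the $\dot W_p$-machinery cooperate to prove your (b), not your (c).

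Conversely, your step (c) is both unnecessary and not what condition~(\ref{three}) asserts. Once $f(x_\delta)=\delta$ makes $f\restriction W$ a continuous bijection onto the cub $C$, and (b) makes the inverse $\delta\mapsto x_\delta$ continuous, one has $W\cong C\cong\omega_1$ directly; left-separation is automatic from continuity of $f$ (any accumulation point of $\{x_\beta:\beta>\alpha\}$ would map above $\delta_\alpha$). Your assessment that ``(c) is the hard part'' while ``(b) is a matter of unwinding $<_{\mathcal P}$'' exactly inverts the structure of the proof. A smaller point: your dense sets $E_\alpha$ are coarser than the paper's $D(\beta,\alpha,s,\xi,m)$, which additionally arrange that $|H_p|$ grows unboundedly and that $\bigcup_p S_p$ contains extensions of every $s\in S$---both are used when one picks the condition $p\in G\cap D(0,\delta,s_\gamma,m,m)$ needed in the convergence argument.
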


\begin{proof}
For any condition $q\in \mathcal P$, let $\mathcal M(q)$ denote the
collection of all $M$ such that there exists a $p<q$ such 
that $M\in \mathcal M_p$. 
For each $\beta < \alpha\in \omega_1$, $s\in S_\alpha$, $m\in \omega$,
and $\xi\in \omega_2$, let
\begin{align*}
D(\beta,\alpha,s,\xi,m)  =& \{ p\in \mathcal P ~:~ \xi \in H_p\ ,
 |H_p|\geq m, 
(\exists \bar s\in S_p )~~ 
s < \bar s,  \mbox{and}  \\
& (\exists M\in \mathcal M_p)~ (\beta\in M, \alpha\notin M)\ \  \mbox{or}\\ 
& (\forall M\in \mathcal M(p)) (\beta\in M \Rightarrow \alpha\in M)\}~.
\end{align*}
It is easily shown that each $D(\beta,\alpha,s,\xi,m)$ is a dense open
subset of $\mathcal P$.
Consider the family 
 $\mathcal D$ of all such $D(\beta,\alpha,s,\xi,m)$,
where  $\beta,\alpha,\xi\in \omega_1$ and $m\in \omega$,
and let $G$ be a $\mathcal D$-generic filter.
Let  $\mathcal M_G = \{ M ~:~ (\exists p\in G)~  M \in
 \mathcal M_{p}\}$ 
and let $C = \{ M\cap \omega_1 ~:~ M\in \mathcal M_G\}$.
For each $\delta\in C$, let $M_\delta$ denote the member
of $\mathcal M_G$ such that $M_\delta\cap\omega_1=\delta$
(we omit the trivial
proof that there is exactly one such $M$ for each $\gamma\in C$). 
 Let $g\subset S$ be a
 generic filter and for each $\delta\in C$, let $s_\delta$ be the
 element of $g\cap S_\delta$. Also for each $\delta\in C$,
 let $x_\delta = e_g(y^{M_\delta}(s_\delta))$.  
Let us also note that for any $\beta<\delta$ both in $C$,
 $x_\beta$  is equal to  $e_{s_\delta}(y^{M_\beta}(s_\beta))$
because of the fact that $y^{M_\beta}$ is an element of
 $M_\delta$.

 We show that the set 
 $W = \{ x_\gamma ~:~ \gamma \in C\}$ 
 is homeomorphic to the ordinal $\omega_1$. 
Indeed, the map $f=\dot f[g]$
is a homeomorphism onto the cub $C$.
It is certainly 1-to-1 and onto. Let us show that it is a closed map.
Let $\{ \delta_\ell ~:~ \ell\in \omega\}\subset C$
 be  strictly increasing with supremum $\delta$. 
We simply have to prove that $x_\delta$ is a limit
of the sequence $\{ x_{\delta_\ell} ~:~ \ell\in \omega\}$. To do so,
 by Definition \ref{Melement}, it suffices to
prove that if  $\gamma\in C\setminus \delta+1$, $T$,
  $\langle\xi_i~:~ i<n\rangle\in T^n$,  $s'\in S_\gamma$, 
$s'\restriction\delta=s_\delta$,
and $s_\gamma$ forces that 
$\dot F$ is a member of $M_\delta\cap \mathcal F_{\langle s_i~:~i<n\rangle}$ for
some $\langle s_i ~:~ i\in n\rangle \in  S^{<\omega}\cap M_\delta$
(as in condition (\ref{three})), then 
 $\{ x_{\delta_\ell} ~:~ \ell\in \omega\}$
meets
  $$ {\Pi}^X_{\langle s_\gamma\oplus g\rangle}
\left((\Pi^X_{\langle s_i ~:~ i<n\rangle})^{-1}
\left(\strut\mathbf{\Pi} 
 \{ U_{s_i\oplus s'}(\xi_i) ~:~i<n\}
\cap    e_{s_\gamma}[F\cap M] \right)\right)$$

 Fix any $m\in \omega$ large enough so that $\{\xi_i ~:~ i<n\}\subset
\{ T(\ell) ~:~ \ell < m\}$ and
choose
 $p\in G\cap D(0,\delta,s_\gamma,m,m)$ so that
  $\{M_\delta,M_\gamma\} \subset \mathcal M_p$.
Let $\beta$ be the maximum element
 of $C\cap \delta$ such
that $ M_\beta \in \mathcal M_p$. 
 By strengthening $p$ and possibly raising $\beta$,
 we
can assume that $\{ s_i~:~i<n\},\dot F$ are in 
$M_\beta$, and that $\langle s_\beta, \{s_i~:~ i<n\},\dot F\rangle$
is in $\mathcal F$.
Choose  $\ell_0\geq m$ such that 
 $\delta_\ell > \beta$ for all
 $\ell>\ell_0$. 
Additionally choose  $r<p$ with $r\in G$
so that $s'\in S_r^\downarrow$, and for each $i<n$,
 $\{ s_i\oplus s_\gamma, s_i\oplus s'\} \subset S_r^\downarrow$.
Choose $\ell$ large enough so that $M\cap \omega_1 <\delta_\ell$
for all $M\cap \mathcal M_{r}\cap M_\delta$.

\bigskip

We note, in $V[g]$,
 that $x_{\delta_\ell} =
e_{s_\delta}(y^{M_{\delta_\ell}}(s_0\oplus s_{\delta_\ell}) )$
and  $e_{s_\delta}(y^{M_{\delta_\ell}}(s_0\oplus s_{\delta_\ell}) )$
is clearly an element of 
$\Pi^X_{\langle s_\gamma\oplus g\rangle}(
(\Pi^X_{\langle s_i\oplus g ~:~ i<n\rangle})^{-1}(
\langle e_{s_{\delta}}
(y^{M_{\delta_\ell}}(s_i\oplus s_{\delta_\ell})) ~:~
i<n\rangle))$. Therefore it suffices to prove
that
$\langle e_{s_{\delta}}(y^{M_{\delta_\ell}}(s_i\oplus s_{\delta_\ell})) ~:~
i<n\rangle$ is in each of
$\mathbf{\Pi} 
 \{ U_{s_i\oplus s'}(\xi_i) ~:~i<n\}$
and $   e_{s_\gamma}[F\cap M]$.
Choose any $\ell'>\ell$. We note that
$\langle e_{s_{\delta}}(y^{M_{\delta_\ell}}(s_i\oplus
s_{\delta_\ell})) ~:~ i<n\rangle$
 is equal to 
$\langle e_{s_{\delta_{\ell'}}}(y^{M_{\delta_\ell}}(s_i\oplus
s_{\delta_\ell})) ~:~ i<n\rangle$.
It  follows from the definition of each $\dot W_r$
that $s_i\oplus s'$ 
forces that $\dot W_r(s_i\oplus s_\delta)\cap M_\delta
\subset U_{s_i\oplus s'}(\xi_i)$ for each $i<n$.
It therefore follows, from the ordering on $\mathcal P$,
that  
$\langle e_{s_{\delta_{\ell'}}}(y^{M_{\delta_\ell}}
(s_i\oplus s_{\delta_\ell}))~:~i<n\rangle\in 
\mathbf{\Pi}\{ U_{s_i\oplus s'}(\xi_i) ~:~ i<n\}$.
Finally, since
$\langle y^{M_{\delta_\ell}}(\bar s) ~:~ \bar s\in S_{\delta_\ell} \rangle$ 
is $M_{\delta_\ell}$-acceptable, we have that
$\langle e_{s_{\delta_{\ell'}}}(y^{M_{\delta_\ell}}(s_i\oplus s_{\delta_\ell})) ~:~
i<n\rangle$ is
a member of $e(B^{(\delta_\ell+1)})$ for some $B$ that is forced by
$s_\delta$ to be a subset of $\dot F$.
That is, we have that 
$\langle e_{s_{\delta_{\ell'}}}(y^{M_{\delta_\ell}}(s_i\oplus s_{\delta_\ell})) ~:~
i<n\rangle$ is a member of
$ e_{s_\gamma}[F\cap M]$.
\end{proof}

Now we prove Lemma \ref{firstcountable}.

\bgroup

\def\proofname{Proof of Lemma \ref{firstcountable}.}
  
\begin{proof}
Fix $M$ etc. as in the hypothesis of the Lemma.
For each generic filter $g$, let $X[g](\delta)$ denote the compact
pre-image of $[0,\delta]$ by the function $\dot f$. 
Let us note that
the  countable product space $\Pi \{ X[{s\oplus g_0}](\delta) ~:~ s\in
S_\delta\}$  is compact and  sequential (\cite[2.5]{IsmailCclosed});
 and therefore
we know from \cite[8.5]{To} that 
every closed subset has a relative $G_\delta$-point.

Let $\{s_{\delta,j} ~:~ j\in \omega\}$ be the
 $\prec$-least enumeration of
 $S_\delta$ ($\delta = 
 M\cap \omega_1$). We will also use   $s_\delta$ to
denote $ s_{\delta,0}$.
We establish some notation. Given any generic $g\subset S$,
and working  in the extension $V[g]$, 
let $\Pi_{M,g}$ denote the projection map from  
the product  $\mathbf{\Pi} \{ X[{s\oplus g}] ~:~ s\in S\}$ onto
the countable product
  $\mathbf{\Pi} \{ X[{s_{\delta,j}\oplus g}] ~:~ j\in \omega\}$.
Let us note that, for each $j$,  $X[{s_{\delta,j}\oplus g}]\cap M$
as a set  is simply equal to $\omega_2 \cap M$. Now
let $\Pi_g$ denote the canonical isomorphism from 
  $\mathbf{\Pi} \{ X[{s_{\delta,j}\oplus g}] \cap M~:~ j\in \omega\}$
to $\left( \omega_2\cap M\right)^\omega$ which is induced by
the bijection on the indices sending $s_{\delta,j}$ to $j$.

We adopt one more notational convention. Consider a vector
 $\vec x$ that is a function from $S_\delta$ to $\omega_2$,
i.e. $\vec x\in \omega_2^{S_\delta}$. When given
 any generic filter $g$, we will regard $\vec x$
(without using any notational device) as also being a function from
 $\{ s\oplus g ~:~ s\in S_\delta\}$ into $\omega_2$. Thus, given any
 $g$, we see $\vec x$ as representing a point in $\mathbf{\Pi}\{
 X[{s_{\delta, j}\oplus g}] ~:~ j\in \omega\}$.

Now fix any generic $g_0$ with $s_\delta\in g_0$.
Recall the notation that $\mathcal F$ is the enumerated
family $\{  (s^\alpha, \{ s^\alpha_i ~:~ i < n_\alpha\}, 
 \dot F_\alpha) ~:~ \alpha\in \lambda\}$. 
Let $\Lambda_\delta = \{ \alpha \in M\cap \lambda ~:~ s^\alpha < s_\delta\}$.
We have that, for each $\alpha\in \Lambda_\delta$,
$s_\delta$ forces a value on  
 $\dot F_\alpha\cap M \subset  Y^n$. It follows also that
 $s_\delta$ forces a value
 on ${\tilde \Pi }^{-1}_{\langle s^\alpha_i ~:~ i<
   n\rangle}(\dot F_\alpha\cap M)$.
Let  $e_\delta(\dot F_\alpha)$ denote the resulting subset 
 $\Pi_{M,g_0}\left( 
{\tilde \Pi }^{-1}_{\langle  s^\alpha_i ~:~ i<
   n\rangle}(\dot F\cap M)\right)$
of $\mathbf{\Pi} \{ X[{s_{\delta,j}\oplus g_0}] ~:~ j< \omega\}$. 
Let  
 $\mathcal H_0$ denote the countable collection, which is in $V$, 
$\{ \Pi_g[e_\delta(\dot F_\alpha) ] ~:~ \alpha\in \Lambda_\delta\}$,
of subsets of $\left(\omega_2\cap M\right)^\omega$.

Before continuing, let us observe that for each
 $\beta \in M\cap \lambda$ and each $j\in \omega$
such that $s^\beta < s_{\delta,j}$, there is an $\alpha \in 
\Lambda_\delta$ such that $s_\delta$ forces that
$ \dot F_\alpha \cap M$ 
is equal to the same set that $s_{\delta,j}$ forces
that $\dot F_\beta \cap M$ to be. This is by property
 (4) of Theorem \ref{filterF} and the fact that
 $S$ is a coherent tree. Indeed, it is immediate
that for each $\xi\in\delta $ with $o(s^\beta) <\xi$,
 we also have that $(s_{\delta,j}\restriction \xi , \{s_i^\beta ~:~ i<
 n_\beta \}, \dot F_\beta)$ is in the list
 $\{ (s^\alpha, \{s_i^\alpha ~:~ i< n_\alpha \}, \dot F_\beta ) ~:~ \alpha
 \in M\cap \lambda\}$. Therefore, by increasing $o(s^\beta)$ 
 we can assume that $s^\beta\oplus s_\delta$ is equal to
 $s^\delta_j$. Then property (4) of Theorem \ref{filterF}
says that there is an $\alpha\in \Lambda_\delta$ so that
$(s^\alpha, \{ s^\alpha_i ~:~ i< n_\alpha \} , \dot F_\alpha)$
satisfies that $\{ s^\alpha_i ~:~ i< n_\alpha \}$ is
equal to  $ \{ s^\beta_i ~:~ i < n_\alpha \}$
and $\dot F_\alpha$ is equal to $(\dot
F_\beta)^{s^\beta}_{s^\alpha}$. 
What the current paragraph has shown is that if we repeat
this same process with any other generic $g$ and any other
member of $S_\delta$ (in place of $s_\delta$), then we still 
end up with the same collection $\mathcal H_0$.
Stated another way, each $s\in S\setminus M$ forces
that, letting $g$ denote the generic,
the collection 
$\{ \left( \Pi_{M,g}\circ \Pi_g\right)^{-1}(H)
 ~:~ H\in \mathcal H_0\}$ is equal to the
collection of all sets of the form
 $(\Pi^X_{\langle s_i ~:~ i< n\rangle})^{-1} (e_s(M\cap \dot F))$ 
that we have to consider in the statement of our Lemma.

Continuing in $V[g_0]$, we  work in
the   space
 $\mathbf{\Pi} \{ X[{s\oplus g_0}](\delta) ~:~ s\in S_\delta\}$.
Since $\Pi_{g_0}^{-1}[\mathcal H_0]$  has the finite intersection
property,  
and  letting $K_0$ denote the intersection of the closures,
we may  choose some
 $\vec x_0 \in \omega_2^{S_\delta}$ (following our convention)
in $K_0$
with relative countable character. Again note that $\vec x_0$ is
actually a sequence of ordinals in $\omega_2$ in that, for 
each $s\in S_{\delta}$, $\vec x_0(s\oplus g_0)\in \omega_2$.
Choose a countable
 subset 
 $T_0\subset \omega_2$ so as to generate a relative neighborhood base
 for the point. Finally, choose $s_0\in g_0$ (with $s_\delta\leq s_0$) 
so that $s_0$ forces all these properties. Here is how much progress
we have made in the proof~:~

\begin{claim}
For each $j\in \omega$,  each extension $s$ of
$s_{\delta,j}\oplus s_0$ forces that $\vec x_0(s_{\delta,j})$
  is the
only point  
 that is in the closure  of each of the sets
  $$ {\Pi}^X_{\langle s\oplus g\rangle}\left((\Pi^X_{\langle s_i ~:~
      i<n\rangle})^{-1}\left( 
\strut\mathbf{\Pi}
 \{ U_{s_i\oplus s'}(\xi_i) ~:~i<n\}
\cap    e_{s}[ M\cap F] \right)\right)$$  
where  $\langle\xi_i~:~ i<n\rangle\in T_0^n$, 
$s'\supset s_{\delta,j}$ is  sufficiently large,
and $s_0$ forces that
$F$ a member of $M\cap \mathcal F_{\langle s_i~:~i<n\rangle}$ for
some $\langle s_i ~:~ i\in n\rangle \in  S^{<\omega}\cap M$. 
\end{claim}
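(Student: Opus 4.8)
The plan is to continue the construction already begun in $V[g_0]$, extracting from the point $\vec x_0 \in K_0$ of countable relative character a single condition $s_0 \in g_0$ that forces everything we need, and then to deduce the statement for all $j \in \omega$ simultaneously by exploiting the symmetry established in the long preceding paragraph --- namely that the collection $\mathcal H_0$ (and hence the family of sets $(\Pi^X_{\langle s_i : i<n\rangle})^{-1}(e_s[M\cap F])$) does not depend on which element of $S_\delta$ or which generic $g$ we use to compute it. First I would observe that for each fixed $j$, the set whose closure appears in the Claim is, after applying $\Pi_{M,g}\circ\Pi_g$, of the form $\widehat U \cap H$ where $H \in \mathcal H_0$ and $\widehat U = \mathbf{\Pi}\{U_{s_i\oplus s'}(\xi_i) : i<n\}$ is a basic open box in the countable product indexed by $T_0$; this is exactly the content of the ``stated another way'' sentence. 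So the point $\vec x_0(s_{\delta,j})$ being the unique point in the intersection of these closures is a statement about the point $\vec x_0$ (viewed in the $j$-th coordinate) having countable character $T_0$ in the compact sequential space $\mathbf\Pi\{X[s\oplus g_0](\delta) : s\in S_\delta\}$, transported through $\Pi_g$.

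The key steps, in order, are: (i) recall that $\vec x_0 \in K_0$ was chosen with relative countable character witnessed by $T_0 \subset \omega_2$, so that the sets $\{\widehat U \cap K_0 : \widehat U$ a box from $T_0\}$ form a neighborhood base of $\vec x_0$ in $K_0$; (ii) note that $K_0$ is the intersection of the closures of the members of $\Pi_{g_0}^{-1}[\mathcal H_0]$, hence $\vec x_0$ lies in every such closure, and since $T_0$ generates its neighborhood filter, $\vec x_0$ is the \emph{only} point lying in the closure of $\widehat U \cap H$ for \emph{all} boxes $\widehat U$ over $T_0$ and all $H$ --- this is where compactness and the finite intersection property of $\{\widehat U \cap H\}$ are used, together with the fact that distinct points of the (countable, $G_\delta$-pointed) product are separated by a box over $T_0$; (iii) absorb into a single $s_0 \in g_0$ with $s_\delta \le s_0$ the finitely-much-at-a-time data: that $s_0$ forces $\vec x_0 \in K_0$, that $T_0$ is forced to generate the relative neighborhood base, and --- crucially --- that the projection identities of the previous paragraph hold below $s_0$ for the relevant $F \in M \cap \mathcal F_{\langle s_i : i<n\rangle}$; (iv) finally, invoke the symmetry paragraph to replace $s_\delta$ by an arbitrary $s_{\delta,j}$ and $g_0$ by an arbitrary generic $g$: since each $s \in S\setminus M$ forces that $\{(\Pi_{M,g}\circ\Pi_g)^{-1}(H) : H\in\mathcal H_0\}$ is \emph{exactly} the family of sets $(\Pi^X_{\langle s_i:i<n\rangle})^{-1}(e_s[M\cap F])$ in the Claim, the uniqueness statement transfers verbatim to every coordinate $s_{\delta,j}$ and every extension $s \supseteq s_{\delta,j}\oplus s_0$.

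The main obstacle I expect is step (ii): verifying that countable relative character of $\vec x_0$ in $K_0$ genuinely upgrades to the ``only point'' assertion quantified over all the sets $\widehat U \cap H$ appearing in the Claim. One must check that $\widehat U \cap H$ is nonempty for each relevant box $\widehat U$ over $T_0$ and each $H \in \mathcal H_0$ --- this follows because $\vec x_0 \in \overline{H}$ and $\widehat U$ is a neighborhood of $\vec x_0$, so $\widehat U \cap H \ne \emptyset$ --- and that the closures of these sets intersect in the single point $\vec x_0$. The latter requires that any point $\vec y \ne \vec x_0$ of the compact product fails to lie in some $\overline{\widehat U \cap H}$; since $T_0$ generates the neighborhood filter of $\vec x_0$ in $K_0$ and $\vec y$ is separated from $\vec x_0$ by the $G_\delta$-point/relative-character hypothesis, one finds a box $\widehat U$ over $T_0$ with $\vec y \notin \overline{\widehat U}$, and $\overline{\widehat U \cap H} \subseteq \overline{\widehat U}$. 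The rest is the routine but notation-heavy bookkeeping of choosing $s_0$ to force the finitely many decidable facts, which I would not grind through here.
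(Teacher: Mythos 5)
Your overall strategy is the paper's own: the authors give no separate argument for this Claim at all --- it is announced as ``how much progress we have made'' and is meant to follow directly from the three choices just made (the point $\vec x_0$ of relative countable character in $K_0$, the countable set $T_0$ generating its relative neighbourhood base, and a single $s_0\in g_0$ forcing these facts), combined with the preceding ``stated another way'' paragraph identifying $\mathcal H_0$ with the family of sets $(\Pi^X_{\langle s_i : i<n\rangle})^{-1}(e_s[M\cap F])$ independently of the generic. Your steps (i)--(iv) unpack exactly those ingredients, so you are reconstructing the intended proof rather than finding a new one, and the uniqueness mechanism (compactness, the finite intersection property of the family $\widehat U\cap H$, and separation of any $\vec y\ne\vec x_0$ by a $T_0$-box) is the right one.

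There is, however, one point where your step (ii) is too quick, and it is precisely the subtlety the authors flag in the discussion after Definition \ref{Melement} (``$s'$ may well have assigned a different and disjoint set of ordinals to be limits of any of these sets''). The Claim quantifies over \emph{all} sufficiently large $s'\supset s_{\delta,j}$, including those incompatible with $s_0$ above level $\delta$. For such $s'$, the set $U_{s_i\oplus s'}(\xi_i)$ is the trace on $M$ of a neighbourhood computed in the topology of $X[s_i\oplus s'\oplus g]$, which is a different topology; the resulting box $\mathbf{\Pi}\{U_{s_i\oplus s'}(\xi_i) : i<n\}$ need not be a neighbourhood of $\vec x_0$ at all, so your justification ``$\widehat U$ is a neighbourhood of $\vec x_0$, hence $\widehat U\cap H\ne\emptyset$ and $\vec x_0$ lies in its closure'' does not apply. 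Nonemptiness can still be recovered from symmetry ($e_{s'}$ of the transferred filter set equals $e_s[M\cap F]$ as a set of ordinals, and the box is a neighbourhood of whatever point $s'$ forces), but the membership of $\vec x_0(s_{\delta,j})$ in the closure of the projection of such a ``foreign'' box intersected with $e_s[M\cap F]$ is exactly what the subsequent induction is engineered to deliver: these boxes enter the families $\mathcal H_\alpha$ at later stages, and the paper derives $\vec x_0$'s membership in their closures from the finite intersection property of the enlarged family together with the uniqueness already secured at stage $0$ (see the remark following item (8) of $\mathbf{IH}_\beta$). So your argument establishes the Claim for the $s_0$-determined boxes, which is the honest content of ``progress made so far,'' but the case of incompatible $s'$ needs the later bookkeeping rather than the direct neighbourhood argument you give.
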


Thus, so long as our final choice for the countable set
 $T$ contains $T_0$ and our choice for $\{ y^M(s) ~:~ s\in S_\delta\}$  
is forced by $s_0$ to satisfy that
 $e(y^M(s_{\delta,j})) = \vec x_0(s_{\delta,j}) $ for each $j\in \omega$,
then
we will have satisfied condition
 (3) for each $s$ that extends one of the elements
of the antichain $\{s_{\delta,j} \oplus s_0 ~:~ j\in \omega\}$.  

This was the first step of an induction. We enlarge the family
$\mathcal H_0$ to the family $\mathcal H_1$ that also has the finite
intersection property. 
Namely, for each  $n\in \omega$, 
and each $\langle \xi_i ~:~ i  < n\rangle \in T_0^n$,
the set
 $$\Pi_{g_0}\left(\mathbf{\Pi}\{  U_{s_{\delta,i}\oplus s_0}(\xi_i) ~:~i<n\}
\times \mathbf{\Pi}\{ M\cap X_{s_{\delta,i}\oplus g_0} ~:~ n\leq i\in
\omega\}\right)$$
is also in $\mathcal H_1$.  Each member of $\mathcal H_1$
is a subset of $\left(\omega_2\cap M\right)^\omega$. Let $g_1$ be any
other generic with $s_0\notin g_1$. Replacing $\mathcal H_0$ in the
above
argument by $\mathcal H_1$, we work in the space
$\mathbf{\Pi} \{ X_{s\oplus g_1}(\delta) ~:~ s\in S_\delta\}$,
 and let $K_1$ be the intersection of the closures of all members
of $\Pi_{g_1}^{-1}[\mathcal H_1]$ 
and we again choose a point $\vec x_1\in K_1$ with relative countable
character. Next, choose a countable set $T_0 \subset T_1\subset
\omega_2$ witnessing that $\vec x_1$ has countable character in
$K_1$. Choose any $s_1\in g_1$ (incomparable with $s_0$) that forces
all of the above properties. Again expand $\mathcal H_1$ to $\mathcal
H_2$ by adding all sets of the form
 $$\Pi_{g_1}\left(\mathbf{\Pi}\{  U_{s_{\delta,i}\oplus s_1}(\xi_i) ~:~i<n\}
\times \mathbf{\Pi}\{ M\cap X_{s_{\delta,i}\oplus g_1} ~:~ n\leq i\in
\omega\}\right)$$  where 
  $n\in \omega$ and
 $\langle \xi_i ~:~ i  < n\rangle \in T_1^n$. It is immediate by the
 construction that the analogue of Claim 1
 holds when $s_0$ is replaced by $s_1$ and $\vec x_0$ is replaced by
 $\vec x_1$. In fact, we formulate a stronger statement 
 for our  inductive hypothesis.  Suppose that $\beta\in \omega_1$
and that we have chosen $\{ s_\alpha ~:~ \alpha< \beta \}$,
  $\{ T_\alpha ~:~ \alpha< \beta  \}$, $\{ \mathcal H_\alpha ~:~
  \alpha<\beta\}$, 
and $\{ \vec x_\alpha  ~:~ \alpha<  \beta\}$   such that\\[5pt]
 $\mathbf{IH}_\beta$~:~ for each $\alpha<\beta$ and each $j\in \omega$

\begin{enumerate}
\item $\{ s_\alpha ~:~ \alpha < \beta\} \subset S\setminus M$ is an
  antichain, 
\item $\{ T_\alpha ~:~ \alpha\in\beta\}$ is an increasing chain of
  countable subsets of $\omega_2$,
\item $\{ \mathcal H_\alpha ~:~ \alpha<\beta\}$ is an increasing chain
and
 $\mathcal H_\alpha$ is a family of subsets of $
\left(M\cap \omega_2\right)^\omega$ that has the finite intersection
property, 
\item  $\{ \vec x_\alpha ~:~ \alpha < \beta \}\subset
  \omega_2^{S_\delta}$
\item  $s_\alpha$ forces that $\vec x_\alpha$ is a point in
 $\mathbf{\Pi}\{ X_{s_{\delta,i}\oplus g}(\delta) ~:~ i\in \omega\}$
 \item for each $\xi\in T_\alpha$, 
  $s_{\delta,j}\oplus s_\alpha$ forces a value, $U_{s_{\delta,j}\oplus
    s_\alpha}$, 
 on $\dot U(\vec x_\alpha(s_{\delta,j}), \xi)\cap M$,
\item 
 $\mathcal H_\alpha$ is equal to $\bigcup_{\eta<\alpha} \mathcal
 H_\eta$ together with all sets of the form
 $$\Pi_{g_\alpha}\left(\mathbf{\Pi}\{  U_{s_{\delta,i}\oplus
     s_\alpha}(\xi_i) ~:~i<n\} 
\times \mathbf{\Pi}\{ M\cap X_{s_{\delta,i}\oplus g_\alpha} ~:~ n\leq
i\in \omega\}\right)$$  where
  $n\in \omega$, 
 $\langle \xi_i ~:~ i  < n\rangle \in T_\alpha^n$, 
and $g_\alpha$ is any generic with $s_\alpha\in g_\alpha$,
\item for any $\alpha \leq \eta<\beta$,
$s_\alpha$ forces that
 $\vec x_\alpha$  is the
only point  
 that is in the closure  of each of the sets in the collection
$\{  \Pi_{g_\alpha}^{-1}(H)
 ~:~ H\in \mathcal H_\eta\}$~.
\end{enumerate}

Having defined the family as above, we stop the construction if $\{
s_\alpha ~:~ \alpha \in \beta\}$ is a maximal antichain. Otherwise, we
choose a generic $g_\beta$ disjoint from $\{ s_\alpha ~:~ \alpha <
\beta\}$ and work in $V[g_\beta]$. The construction of 
 $\vec x_\beta$, $T_\beta\supset \bigcup \{ T_\alpha ~:~
 \alpha<\beta\}$,
and $\mathcal H_\beta$ proceeds as in the selection for $\beta = 1$. 
For any $\alpha<\beta$, item (8) appears to be getting stronger but
only in the sense that we are preserving that $\vec x_\alpha$ is in
the closure of $\Pi_{g_\alpha}^{-1}(H)$ for $H\in \mathcal
H_\beta$. The fact that $\mathcal H_\beta$ has the finite intersection
property guarantees that there is some point in all of the closures,
and by the uniqueness guaranteed by 
 $\mathbf{IH}_\beta$ we have that $\vec x_\alpha$ is that point.
\bigskip

Now we assume that $\{ s_\alpha ~:~ \alpha < \beta \}$ is a maximal
antichain. For the statement of the Lemma we can let $\gamma$ be any
large enough ordinal such that $o(s_\alpha) \leq \gamma$ for all 
 $\alpha< \beta$. We want to define our $M$-acceptable sequence
 $\langle y^M(s) ~:~ s\in S_\delta\rangle$ so that each $s_\alpha$
 forces $\vec x_\alpha$ to equal $\langle e(y^M(s)) ~:~ s\in
 S_\delta\rangle$. 
  We skip the proof that $\mathbf{IH}_\beta$ ensures
 that  if we succeed, then this is also 
 an $(M,\omega)$-acceptable sequence. 
Let $\mathcal H_\beta$ be the filter base generated by 
all finite intersections from the family
 $\bigcup \{ \mathcal H_\alpha ~:~ \alpha < \beta \}$. $\mathcal
 H_\beta$ is a filter base on the product set
 $\left(M\cap \omega_2\right)^\omega$. Let us note that for each $H\in 
 \mathcal H_\beta$, there is an $j= j_H\in \omega$ such that $H$ can be
 factored as $\mathbf{\Pi}\{ H(i) ~:~ i<j\} \times
 (M\cap \omega_2)^{\omega\setminus j}$.  Since $\mathbf{\Pi}\{H(i) ~:~
 i<j_H\}$ is a subset of $M$, it then  follows that 
$H\cap M$ is not empty for each $H\in \mathcal H_\beta$. 

Let $\{\alpha_k ~:~ k\in \omega\}$ be an enumeration of
 $\Lambda_\delta$. Let $\{ H_m ~:~ m\in \omega\}$ enumerate
a descending base for 
$\mathcal H_\beta$.
Choose the sequence $\{ H_m ~:~ m\in \omega\}$ so that
the element 
$\Pi_g[e_\delta(\dot F_{\alpha_m}) ]$ of $\mathcal H_0$ contains
$H_m$. 
   For each $m$  let $j_m \geq m, j_{H_m}$. 
For each $m$, choose $\vec \xi_m \in M\cap H_m$. 
Choose any $\eta_m\in M\cap \delta$ large enough so that for
each $i<j_m$, $(s_{\delta,i}\restriction \eta_m)\oplus s_\delta$ 
is equal to $s_{\delta,i}$. If needed, we can increase $\eta_m$
so that, for each $i<j_m$,
 $s_{\delta,i}$ forces that the ordinal
 $\vec \xi_m(i)$ is in the sequential closure
of $\eta_m\subset \omega_1$.  Since $s_\delta$ forces that
 $\mathbf {\Pi} \{ X_{s_{\delta,i} \oplus g} (\eta_m)~:~ i < j_m\}$ is
 sequential, there is an $\bar s_m < s_\delta$ (in $M$) 
and a  $\vec y_m \in Y^{\{s_{\delta,i}\restriction\eta_m
\oplus \bar s_m~:~ i < j_m\}}\cap M$
so that $\bar s_m$ 
  forces that
$e_g(\vec y_m)$ is equal to $\vec \xi_m\restriction j_m$. 
We can, and do, require a little more of each such
 witness $\vec y_m$. 
There are many members $\vec y$ of $\Lambda^{\{ s_{\delta,i} ~:~ i < m\}}$
 that satisfy  that $s_\delta$ forces that $e_g(\vec y)$
is equal to $\vec \xi_m \restriction \eta_m$. It is a simple matter to
merge finitely many of them so as to ensure that for 
 each  $\alpha \in \{\alpha_k ~:~ k\leq m\}$, 
$\vec y_m \restriction \{ s^\alpha_i \oplus \bar s ~:~ i < n_\alpha\}$
is a member of $e_g(\dot F_\alpha)$. This means that if 
$(s^\alpha , \{s^\alpha_i ~:~ i < n_\alpha \}, \dot A)$ is a member of
 $\mathcal A\cap M$ satisfying that $\dot F_\alpha = (\dot
 A)^{(\omega_1)}$, 
(for some $\alpha\in \{\alpha_k ~:~ k\leq m\}$)
then $\vec y_m \restriction \{
 s^\alpha_i \oplus \bar s ~:~ i < n_\alpha\}$ is a member of
 $(\dot A)^{(<\delta)}$.

We are ready to define $y^M(s_{\delta,i})$
as in Definition \ref{seql}. 
It should be clear that each member of the maximal antichain
 $\{ s_\alpha ~:~ \alpha < \beta\}$ forces that 
 $\{ \vec y_m(i) ~:~ i < m \in \omega\}$ is a sequence that converges,
 with $s_\alpha$ forcing that it converges to 
 $\vec x_\alpha(i)$. Thus, we define $y^M(s_{\delta,i})$ as
 being built from  $\{ \vec y_m(i) ~:~ i< m \in \omega\}$.
This shows that $y^M(s_{\delta,i})$ is a member of
 $\Lambda$. Finally we check that the sequence
 $\{ y^M(s) ~:~ s\in S_\delta\}$ is $M$-acceptable. 
Following Definition \ref{acceptable},  we take any
 $(\bar s, \{ \bar s_i ~:~ i<n\} , \dot A)$ in $\mathcal A\cap M$. 
We also 
fix any $j< \omega$ and letting $s_{\delta,j}>\bar s$ be the $s$ in the
statement in Definition \ref{acceptable}. 
Choose any $m>j$ large enough so that $\{\bar s_i \oplus s_\delta  ~:~ i
< n\}$ is a subset of $\{ s_{\delta, i} ~:~ i < j_m\}$. By possibly
increasing $\bar s$, we can assume that there is an $\alpha\in
\Lambda_\delta$  so that
 $s^\alpha_i  = \bar s_i \oplus s^\alpha_0 $ ($i<n_\alpha$)
and   $\dot F_\alpha$ is equal to $\left( \dot A^{\bar
    s}_{s^\alpha}\right)^{(\omega_1)}$. So long as $m$ is also large
enough that this $\alpha$ is in the set $\{ \alpha_k ~:~ k\leq m\}$ we
have that $s_\delta$ forces that $\vec y_m \restriction
 \{s^\alpha_i\oplus g ~:~ i < n_\alpha\}$ is in $\dot F_\alpha$.
 This is
 equivalent to the fact that $s_{\delta,j}$ forces that
 $\vec y_m \restriction \{s^\alpha_i \oplus g ~:~ i < n_\alpha\}$
(which equals $\vec y_m \restriction \{\bar s_i \oplus g ~:~ i <
n_\alpha\}$)  is in $(\dot A)^{(<\delta)}$.  Let $J\in \omega$ be
chosen
 so that each $m>J$ is sufficiently large as above. Well now,
 for each  $m>J$, let
  $\vec b_m\in Y^n$ be simply $\vec y_m\restriction \{\bar s_i \oplus
  s_\delta\} $ with the indices relabelled. Then
 it follows that $s_{\delta,j}$ forces that 
 $\langle y^M(\bar s_i\oplus s_{\delta,j}) ~:~ i<n\rangle$ is
in $B^{(\delta+1)}$ where $B = 
\{  \vec b_m  ~:~  J< m \in \omega\}$ is forced by 
 $s_{\delta,j}$ to be a subset of $\dot A\cap M$. 
\end{proof}

\bibliographystyle{acm}
\bibliography{countablycompact}

{\rm Alan Dow, Department of Mathematics and Statistics, University of North Carolina,
Charlotte, North Carolina 28223}

{\it e-mail address:} {\rm adow@uncc.edu}

{\rm Franklin D. Tall, Department of Mathematics, University of Toronto, Toronto, Ontario M5S 2E4, CANADA}

{\it e-mail address:} {\rm f.tall@math.utoronto.ca}

\end{document}